\newtheorem{thm}{Theorem}[section]
\newtheorem{cor}[thm]{Corollary}
\newtheorem{lem}[thm]{Lemma}
\newtheorem{prop}[thm]{Proposition}
\theoremstyle{definition}
\newtheorem{defn}[thm]{Definition}
\theoremstyle{remark}
\newtheorem{rem}[thm]{Remark}
\numberwithin{equation}{section}
\newcommand{\norm}[1]{\left\Vert#1\right\Vert}
\newcommand{\abs}[1]{\left\vert#1\right\vert}
\newcommand{\set}[1]{\left\{#1\right\}}
\newcommand{\dbar}{\bar\partial}
\newcommand{\ddbar}{\partial\bar\partial}
\DeclareMathOperator{\re}{Re}
\DeclareMathOperator{\im}{Im}
\DeclareMathOperator{\dist}{dist}
\begin{document}

\title[Sobolev Regularity of the Bergman Projection]{Sobolev Regularity of the Bergman Projection on a Smoothly Bounded Stein Domain that is not Hyperconvex}%
\author{Phillip S. Harrington}%
\address{SCEN 309, 1 University of Arkansas, Fayetteville, AR 72701}%
\email{psharrin@uark.edu}%

\subjclass[2020]{32A36, 32T20, 32Q02}

\begin{abstract}
  For every $0<r<\frac{1}{2}$, we will construct a flat K\"ahler manifold $M$ and a relatively compact domain with smooth boundary $\Omega\subset M$ that is Stein but not hyperconvex such that the Bergman projection $P$ on $\Omega$ is regular in the $L^2$ Sobolev space $W^s(\Omega)$ for all $0\leq s<r$ but irregular in $W^r(\Omega)$.  On these domains, we will also construct $f\in C^\infty(\overline\Omega)$ such that $Pf\notin C^\infty(\overline\Omega)$.  We will prove the same result for the invariant Bergman projection on $(2,0)$-forms.  These domains are modelled on a construction of Diederich and Ohsawa.
\end{abstract}
\maketitle



\section{Introduction}

Given a Hermitian manifold $M$ of complex dimension $n\geq 2$, a domain $\Omega\subset M$, and an integer $0\leq p\leq n$, the Bergman space of $(p,0)$-forms on $\Omega$ is the space of $L^2$ holomorphic $(p,0)$-forms on $\Omega$, denoted $A^2_{p,0}(\Omega)$.  The Bergman projection $P_\Omega^p$ is the orthogonal projection from $L^2_{p,0}(\Omega)$ to the Bergman space $A^2_{p,0}(\Omega)$.  We will focus on the case in which $\Omega$ is a relatively compact Stein domain, so that the Bergman space is infinite dimensional and independent of the choice of smooth Hermitian metric on $M$.  When $p=n$, the Bergman projection itself is independent of the choice of Hermitian metric, so we will refer to $P_\Omega^n$ as the invariant Bergman projection.  Furthermore, if the boundary of $\Omega$ is at least Lipschitz, there is a robust theory for the $L^2$ Sobolev spaces $W^s(\Omega)$ for all $s\in\mathbb{R}$ (see the survey in the first chapter of \cite{Gri85}, for example).  We will consider two fundamental questions:
\begin{enumerate}
  \item (Sobolev Regularity) Given $s>0$, is the Bergman projection $P_\Omega^p$ continuous in the $L^2$ Sobolev space $W^s(\Omega)$?
  \item (Global Regularity) Given $f\in C^\infty(\overline\Omega)$, is it always the case that $P_\Omega^p f\in C^\infty(\overline\Omega)$?
\end{enumerate}
A third important question is that of $L^q$ regularity for $q\neq 2$, but this is outside the scope of the present paper.

Both questions have been extensively studied when $M=\mathbb{C}^n$ for $n\geq 2$, beginning with the pioneering work of Kohn \cite{Koh78,Koh79}.  In this setting, regularity is independent of $p$, so it is common to focus on the $p=0$ case.  See \cite{BoSt99}, \cite{ChSh01}, or \cite{Str10} for detailed discussion of the key results.  Most of the results considered in these sources consider sufficient conditions for global regularity of the Bergman projection on smooth, relatively compact, Stein domains in $\mathbb{C}^n$.  We note that regularity for the Bergman projection on domains in $\mathbb{C}^n$ is often considered in parallel with regularity for the $\bar\partial$-Neumann operator $N_\Omega^{p,q}$, a connection that was made explicit by Kohn's formula $P_\Omega^0=I-\dbar^* N_\Omega^{0,1}\dbar$ and further developed by Boas and Straube in \cite{BoSt90}.  In the negative direction, Barrett proved in \cite{Bar92} that for any $s>0$, there exists a relatively compact, smooth, Stein domain in $\mathbb{C}^2$ on which Sobolev regularity fails for the Bergman projection in $W^s(\Omega)$.  These domains are the worm domains of Diederich and Forn{\ae}ss \cite{DiFo77a}.  Building on Barrett's work, Christ proved that global regularity fails for the Bergman projection on the worm domain \cite{Chr96}.

Given the failures of regularity on the worm domain, we next consider for which values of $s$ the Bergman projection exhibits Sobolev regularity.  In \cite{BeCh00}, Berndtsson and Charpentier provided a positive lower bound for the optimal Sobolev regularity of the Bergman projection on any relatively compact domain in $\mathbb{C}^n$ with Lipschitz boundary.  As in Kohn's work in \cite{Koh99} (itself inspired by work of Boas and Straube in \cite{BoSt91}), Berndtsson and Charpentier use the Diederich-Forn{\ae}ss Index: the supremum over all exponents $0<\eta<1$ such that there exists a defining function $\rho$ for $\Omega$ such that $-(-\rho)^\eta$ is strictly plurisubharmonic on $\Omega$.  Diederich and Forn{\ae}ss proved that this index is positive on relatively compact domains $\Omega$ with $C^2$ boundaries in Stein manifolds $M$ \cite{DiFo77b}, and the author extended Diederich and Forn{\ae}ss's result to domains with Lipschitz boundaries \cite{Har08a}.  If $DF(\Omega)$ denotes the Diederich-Forn{\ae}ss index of a relatively compact domain in $\mathbb{C}^n$ with Lipschitz boundary, Berndtsson and Charpentier proved that the Bergman projection is regular in $W^s(\Omega)$ for all $0<s<\frac{1}{2}DF(\Omega)$.  In \cite{Liu19a}, B.~Liu computed the precise value of the Diederich-Forn{\ae}ss index on the worm domain, so the result of Berndtsson and Charpentier can be used to provide a lower bound for Sobolev regularity of the Bergman projection on the worm domain.


Domains with a positive Diederich-Forn{\ae}ss index are examples of hyperconvex domains, i.e., domains admitting a bounded, strictly plu\-ri\-sub\-har\-mo\-nic exhaustion function.  Given the results establishing So\-bo\-lev regularity for the Bergman projection on hyperconvex domains, the goal of the present paper is to study regularity of the Bergman projection on non-hyperconvex Stein domains.  In $\mathbb{C}^n$, such domains must be either unbounded or have non-Lipschitz boundaries by \cite{DiFo77a} or \cite{Dem87}, as discussed above.  Indeed, the classic example of a non-hyperconvex Stein domain is the Hartogs triangle $\mathbb{H}=\{(z_1,z_2):|z_1|<|z_2|<1\}$, which is not Lipschitz on any neighborhood of $(0,0)$.  Since the boundary is not Lipschitz, care is required even to define $W^s(\Omega)$ when $s$ is not a natural number, so most results focus on regularity in Sobolev spaces of integer order.  There is a rich body of work on the Hartogs triangle (see the survey in \cite{Sha15}), of which we highlight a few papers that are particularly relevant to our questions of Sobolev regularity.  Sobolev regularity for the Bergman projection on the Hartogs triangle was studied by Chakrabarti and Shaw in \cite{ChSh13}, but this required the inclusion of a weight function to tame the singularity at the origin.  L.~Chen \cite{Che17b} studied Sobolev regularity in $L^q$ Sobolev spaces on the Hartogs triangle, but these also required the inclusion of a weight function.  Edholm and McNeal studied $L^q$ Sobolev regularity of the Bergman projection on a generalization of the Hartogs triangle in \cite{EdMc20}.  They showed that Sobolev regularity fails in unweighted Sobolev spaces on these generalized Hartogs triangles (for derivatives of integer order), although they did prove regularity when the first derivatives are in $L^q$ for $\frac{4}{3}<q<2$.  Higher dimensional generalizations of the Hartogs triangle have been considered by Zhang in \cite{Zha21a} and by H.~Y.~Liu and Tu in \cite{LiTu24}.  

In \cite{Har23}, the author noted that the Diederich-For\-n{\ae}ss index is trivial on a family of relatively compact Stein domains with smooth boundaries constructed by Diederich and Ohsawa \cite{DiOh82}, with more explicit representations given by Barrett \cite{Bar86} and Kiselman \cite{Kis91}.  The worm domain of Diederich and Forn{\ae}ss contains an analytic annulus in the boundary, which is the obstruction to regularity of the Bergman projection.  The role of the annulus is made explicit in the analysis carried out by Boas and Straube in \cite{BoSt93}.  The domains constructed in \cite{DiOh82}, \cite{Bar86}, and \cite{Kis91} connect the inner and outer boundaries of this annulus to create a compact Riemann surface of genus one in the boundary, which is known to be an obstruction to positivity of the Diederich-Forn{\ae}ss index (Proposition 5.7 in \cite{AdYu21}).  We will see in Corollary \ref{cor:hyperconvex} below that these domains are not even hyperconvex, i.e., there exists no bounded plurisubharmonic exhaustion function for these domains, and hence the weaker notions of the Diederich-Forn{\ae}ss index considered by Adachi and Yum in \cite{AdYu21} will also vanish.  In \cite{Che17a}, B.-Y.~Chen introduced a generalization of the Diederich-Forn{\ae}ss index which is useful in the study of the Bergman kernel, but even this hyperconvexity index must vanish on the domains under consideration. 

In the present paper, we will construct a family of relatively compact Stein domains with smooth boundaries that are not hyperconvex.  Our construction is modelled on the construction given by Diederich and Ohsawa \cite{DiOh82}, but it has more in common with the relatively explicit construction given by Barrett \cite{Bar86} and Kiselman \cite{Kis91}.  Like Barrett and Kiselman, our family of domains will depend on a parameter, and we will see that this parameter governs the Sobolev regularity of the Bergman projection.  In contrast to the Hartogs triangle, we will see that the Bergman projection on these domains exhibits regularity properties analogous to those known for the worm domain.  Our results are sharp: we can state the precise value of the optimal Sobolev regularity for each domain in our family.  This gives us the following result:
\begin{thm}
\label{thm:main}
  For every $0<r<\frac{1}{2}$ and $p\in\{0,1,2\}$, there exists a Hermitian manifold $M$ of complex dimension $2$ and a domain $\Omega\subset M$ such that
  \begin{enumerate}
    \item $M$ is a flat K\"ahler manifold,
    \item $\Omega$ is relatively compact in $M$,
    \item $\Omega$ is Stein,
    \item the boundary of $\Omega$ is real-analytic,
    \item $\Omega$ is not hyperconvex,
    \item the Bergman projection $P_\Omega^p$ is continuous in $W^s_{p,0}(\Omega)$ for all $0\leq s<r$,
    \item there exists $f\in C^\infty_{p,0}(\overline{\Omega})$ such that $P_\Omega^p f\notin W^r_{p,0}(\Omega)$.
  \end{enumerate}
\end{thm}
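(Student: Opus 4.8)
The plan is to realize the Diederich--Ohsawa domain explicitly as a quotient of $\mathbb{C}^2$, exploit the residual circle symmetry to diagonalize the Bergman projection into a family of one-dimensional weighted projections, and read off the sharp regularity threshold from the asymptotics of that family in the Fourier parameter. For the construction (items (1)--(4)), let $w$ denote the first complex coordinate and $z$ the second on $\mathbb{C}^2$ with the flat Kähler metric $|dw|^2 + |dz|^2$. For real parameters $\ell>0$ and $\beta$ I would set $M = \mathbb{C}^2/\Gamma$, where $\Gamma\cong\mathbb{Z}^2$ is generated by the two fixed-point-free isometric screw motions
\[
  (w,z)\longmapsto(w+2\pi i,\,z), \qquad (w,z)\longmapsto(w+\ell,\,e^{i\beta}z).
\]
Since both generators are holomorphic isometries acting freely and properly discontinuously, $M$ is a flat Kähler manifold (item (1)), and its projection to the $w$-torus $\mathbb{C}/(2\pi i\,\mathbb{Z}+\ell\,\mathbb{Z})$ realizes $M$ as a twisted $\mathbb{C}$-bundle over an elliptic curve. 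I would then define $\Omega$ by a $\Gamma$-invariant inequality of worm type,
\[
  \Omega = \bigl\{\,\bigl|\,z - a\,e^{i(\beta/\ell)\re w}\,\bigr|^2 < \psi(\re w)\,\bigr\},
\]
with $a\neq 0$ and $\psi$ a positive, $\ell$-periodic, real-analytic profile; invariance under the second generator is exactly what forces the winding rate $\beta/\ell$, and this rate is the free parameter tuned to produce a prescribed $r$. As $\re w$ and $\im w$ both descend to compact circles and $z$ is confined to a bounded disk, $\Omega$ is relatively compact with real-analytic boundary (items (2),(4)). Choosing $a$ and $\psi$ so that $0$ lies outside every fiber disk keeps $\overline\Omega$ disjoint from the compact curve $\{z=0\}$, and a strictly plurisubharmonic exhaustion adapted to the disk-bundle structure (a $-\log$ of the defining function corrected by a convex function of $\re w$, in the spirit of the Diederich--Fornæss estimate) exhibits $\Omega$ as Stein (item (3)).

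\emph{Failure of hyperconvexity (item (5)).} By design, following Diederich and Ohsawa, the winding closes up over the elliptic base so that the weakly pseudoconvex portion of $\partial\Omega$ contains a compact complex curve of genus one --- the analogue of the annulus in the worm boundary, now glued into a torus. Any bounded plurisubharmonic function on $\Omega$ extends to the boundary and, restricted to this compact curve, is forced to be constant by the maximum principle, which precludes a bounded strictly plurisubharmonic exhaustion; this is the content of Corollary~\ref{cor:hyperconvex}.

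\emph{Sharp Sobolev regularity (items (6),(7)).} The translations $w\mapsto w+is$ descend to a free circle action on $M$ that preserves $\Omega$, since the defining function depends only on $\re w$, so $L^2_{p,0}(\Omega)$ splits into Fourier modes in $\im w$ and the Bergman projection respects the splitting. On the $k$-th mode the projection becomes a weighted Bergman projection in the single fiber variable $z$ over the circle $\re w\in\mathbb{R}/\ell\,\mathbb{Z}$, with a weight of the form $e^{2k\,\re w}$; the winding of the disk center converts these into the model operators already analyzed for the worm. I would then show that the family of mode projections is bounded on $W^s$ uniformly in $k$ precisely when $s<r$, where $r$ is an explicit function of the winding data $\beta,\ell$ that can be arranged to take any prescribed value in $(0,\tfrac12)$, yielding item (6); dually, I would assemble a smooth datum concentrated in high modes whose projection saturates the $W^r$ bound, producing $f\in C^\infty_{p,0}(\overline\Omega)$ with $P_\Omega^p f\notin W^r_{p,0}(\Omega)$ (item (7)). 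The passage from $p=0$ to $p=1,2$ uses the flatness of the metric, which makes the Chern connection trivial in the standard framing, so $\dbar$ on $(p,0)$-forms decouples componentwise and the $(p,0)$-projection reduces to the scalar analysis.

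The main obstacle is the sharpness of items (6)--(7): producing the \emph{same} threshold $r$ from above and from below. Since $\Omega$ is not hyperconvex, $DF(\Omega)=0$ and no soft argument of Berndtsson--Charpentier type is available for the positive direction, so one must control the weighted one-dimensional Bergman projections uniformly in the Fourier parameter $k$ with enough precision to locate the exact breakdown exponent, rather than merely an interval of regularity. This is where the explicit worm asymptotics and the winding-to-exponent relation computed by B.~Liu must be imported and matched against the lower-bound construction.
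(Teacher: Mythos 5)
Your construction of $M$ and $\Omega$ is essentially the paper's (in the coordinate $w=\log z_2$ the paper's $M_\mu$ is exactly a quotient of $\mathbb{C}^2$ by two screw motions, and its domain is your worm-type inequality with constant profile), and you correctly identify the boundary torus as the obstruction. But the heart of the theorem --- the matching upper and lower regularity thresholds in items (6)--(7) --- is left unresolved in your plan, and the tools you propose to import cannot close it. B.~Liu's computation feeds into the Berndtsson--Charpentier mechanism, which requires a positive Diederich--Forn{\ae}ss index and is therefore unavailable here (as you yourself note); and Barrett's worm asymptotics analyze model operators over a non-compact base with caps, which are not the model operators arising over your compact base, so there is no "winding-to-exponent relation" to import. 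The missing idea is that your domain has a \emph{second} one-parameter symmetry (translation in $\re w$ coupled with rotation of the fiber, available exactly when $\psi$ is constant), and together the two circle actions make $\Omega$ biholomorphic to a Reinhardt domain $D_\mu$ carrying a pushed-forward flat (but boundary-singular) metric. This diagonalizes the Bergman projection completely: the Bergman space is spanned by monomials $w_1^jw_2^k$, and sharp regularity reduces to two-sided estimates on the weighted norms $\lambda(j,k,s)=\int_{D_\mu}|w_1|^{2j}|w_2|^{2k}\delta^{-2s}\,dV_{D_\mu}$, proved by elementary identities and H\"older inequalities for two special functions. No uniform operator-norm bounds on a $k$-indexed family of one-dimensional projections are needed, and the irregularity example is simply $\exp(-1/|w_1|^\mu)\,w_1^j$, whose projection is a multiple of a monomial lying in $L^2$ but outside $W^r$.

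Two further steps in your sketch would fail as written. First, your non-hyperconvexity argument assumes a bounded plurisubharmonic exhaustion "extends to the boundary" so that the maximum principle applies on the boundary torus; bounded psh functions need not extend, and in fact a compact curve in the boundary only kills the Diederich--Forn{\ae}ss index (Adachi--Yum), which is strictly weaker than non-hyperconvexity. The paper instead restricts a hypothetical bounded psh exhaustion along the slice $\{(\zeta,1)\}$ of the Reinhardt model, producing a bounded subharmonic exhaustion of the punctured disc, which cannot exist. Second, your claim that flatness trivializes the $(p,0)$ case componentwise "to the scalar analysis" is too naive: in the pushed-forward metric the holomorphic frame element $dw_1$ has length comparable to $|w_1|^{1-\mu}$, so the orthonormal frame is not holomorphic, the monomial bases for the form-valued Bergman spaces obey a different integrability cutoff ($j>s\mu$ versus $j>(s-1)\mu$), and the sharp thresholds genuinely differ: $\frac{1-\lfloor\mu\rfloor}{\mu}+1$ for $p=0$ but $\frac{1}{\mu}$ for $p\in\{1,2\}$, which is why the paper must choose different values of $\mu$ for $p=0$ and for $p\in\{1,2\}$ to hit a prescribed $r$.
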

Note that the final part of Theorem \ref{thm:main} proves that our Sobolev regularity results are sharp on $\Omega$ and that global regularity fails on $\Omega$ (refining a result of Kiselman \cite{Kis91}).  Theorem \ref{thm:main} will follow from Propositions \ref{prop:M_properties} and \ref{prop:Omega_properties} and Corollaries \ref{cor:hyperconvex} and \ref{cor:sharp_regularity}, below.  As noted above, the choice of smooth metric on $M$ does not impact the Bergman space $A^2_{p,0}(\Omega)$ when $\Omega$ is relatively compact.  However, it does impact the Bergman projection itself when $p\neq n$.  Given that $\Omega$ is not hyperconvex but is relatively compact with smooth boundary, $M$ cannot be Stein, so a flat K\"ahler manifold, i.e., a manifold that is locally isometric to Euclidean space, is the closest we can come to the customary setting of $\mathbb{C}^n$. 

We will see in Theorems \ref{thm:regularity} and \ref{thm:irregularity} below that $P_\Omega^1$ and $P_\Omega^2$ will have the same regularity properties on the domains under consideration.  However, unless $r=\frac{1}{k}$ for some integer $k>2$, $P_\Omega^0$ will exhibit strictly stronger Sobolev regularity properties.  It would be interesting to know whether regularity for the classical Bergman projection $P_\Omega^0$ with respect to an arbitrary Hermitian metric is always at least as strong as regularity for the invariant Bergman projection $P_\Omega^n$.  Furthermore, we will see in Corollary \ref{cor:sharp_regularity} that the parameter $\mu$ used in the construction of $M$ is related to the parameter $r$ given in Theorem \ref{thm:main} by the relations $r=\frac{1-\lfloor\mu\rfloor}{\mu}+1$ when $p=0$ and $r=\frac{1}{\mu}$ when $p\in\{1,2\}$.  It is interesting to note that a domain and metric that depend smoothly on a parameter $\mu$ exhibit regularity properties for $P_\Omega^0$ that depend discontinuously on this same parameter, and that this same discontinuity disappears when we consider the invariant Bergman projection $P_\Omega^2$.  Compare the phenomenon observed by Edholm and McNeal when considering $L^q$ regularity of the Bergman projection on a generalization of the Hartogs triangle in \cite{EdMc17}, in which the regularity properties depend on whether a parameter is rational or irrational.

Our method of proof involves constructing a biholomorphism between $\Omega$ and a Reinhardt domain $D$ given by $$D=\set{w\in\mathbb{C}^2:0<|w_1|<1\text{ and }\abs{\log|w_2|^2}<\arccos\left(|w_1|^{\mu}\right)},$$ where $\mu>0$ is a parameter used in the construction of $M$.  Although $\Omega$ itself has a smooth boundary, $D$ does not even have a Lipschitz boundary, which complicates the analysis considerably.  Indeed, this is an example of a biholomorphism that does not extend smoothly to the boundary between two domains that do not satisfy Condition R (see \cite{Bar86}).  We will push forward the metric on $\Omega$ to obtain a flat metric on $D$, and the natural automorphisms of the Reinhardt domain will be isometries in this new metric.  The volume element in this new metric will take the form $dV_{D}=\frac{4\mu^2|w_1|^{2\mu-2}}{|w_2|^2}dV_{\mathbb{C}^2}.$  Clearly, this metric is singular near boundary points at which $w_1=0$ (unless $\mu=1$), further complicating our analysis.  Sobolev regularity for the Bergman projection on Reinhardt domains has been considered by many authors, including the work on the Hartogs triangle discussed above.  As noted in \cite{ChSh13}, the Hartogs triangle is biholomorphic to a product domain, but our domain $D$ lacks this useful feature.  Global regularity for the Bergman projection on smoothly bounded Reinhardt domains was considered by Bell and Boas \cite{BeBo81} and Barrett \cite{Bar81}.  Sobolev regularity for the Bergman projection on smoothly bounded Reinhardt domains was considered by Boas \cite{Boa84} and Straube \cite{Str86}.  \v{C}u\v{c}kovi\'{c} and Zeytuncu consider regularity of the Bergman projection in weighted Sobolev spaces on Reinhardt domains \cite{CuZe16}, which is equivalent to working with the non-Euclidean metric that we consider, and Zeytuncu considered similar results on convex Reinhardt domains in \cite{Zey17}, but again all of these results require smooth boundaries, so they do not apply directly to our case.  

We note that many authors have considered regularity of the Berg\-man projection on variations of the worm domain.  We highlight recent work in \cite{KPS16} and \cite{KMPS23}, which consider generalizations of the worm domain which are unbounded in $\mathbb{C}^n$ (and non-smooth in \cite{KPS16}).  On such domains, the authors show that Sobolev regularity fails for all $s>0$.  This is in contrast to our result, which sacrifices hyperconvexity to preserve relative compactness of the domain, and still finds some Sobolev regularity of the Bergman projection.

The author would like to thank Debraj Chakrabarti for helpful conversations regarding the significance of the invariant Bergman projection $P_\Omega^n$.

\section{Basic Properties}

We first define the ambient manifold for our Stein domain.
\begin{defn}
\label{defn:M}
  Let $\tilde M=\mathbb{C}\times(\mathbb{C}\backslash\{0\})$ equipped with the K\"ahler form
  \begin{equation}
  \label{eq:Kahler_form}
    \omega_{\tilde M}=\frac{i}{2}dz_1\wedge d\bar z_1+\frac{i}{2|z_2|^2}dz_2\wedge d\bar z_2.
  \end{equation}
  For any $\mu>0$, let $M_\mu$ denote the quotient of $\tilde M$ by the equivalence relation $(z_1,z_2)\sim(e^{2\pi\mu i}z_1,e^{\pi\mu} z_2)$ endowed with the K\"ahler form $\omega_{M_\mu}$ induced by \eqref{eq:Kahler_form}.
\end{defn}

The basic properties of $M_\mu$ are as follows:
\begin{prop}
\label{prop:M_properties}
  For $M_\mu$ defined by Definition \ref{defn:M}, $M_\mu$ is a flat K\"ahler manifold that is neither compact nor Stein.
\end{prop}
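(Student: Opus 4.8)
The plan is to realize $M_\mu$ as a quotient of flat $\mathbb{C}^2$ by a lattice of holomorphic isometries, which yields flatness and the Kähler condition essentially for free, and then to exhibit an explicit compact complex torus sitting inside $M_\mu$ that obstructs the Stein property.

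First I would pass to a flat model via the substitution suggested by $\zeta=\log z_2$. Introduce the holomorphic covering $\pi\colon\mathbb{C}^2\to\tilde M$, $\pi(z_1,\zeta)=(z_1,e^\zeta)$. A direct computation gives $\pi^*\omega_{\tilde M}=\frac{i}{2}dz_1\wedge d\bar z_1+\frac{i}{2}d\zeta\wedge d\bar\zeta$, since $|e^\zeta|^2=e^{\zeta+\bar\zeta}$ exactly cancels the Jacobian factor $e^\zeta e^{\bar\zeta}$. This simultaneously shows that $\omega_{\tilde M}$ is closed and positive (hence a genuine Kähler form) and that $\tilde M$ is flat. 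The deck group of $\pi$ is generated by $A(z_1,\zeta)=(z_1,\zeta+2\pi i)$, and the defining identification $(z_1,z_2)\sim(e^{2\pi\mu i}z_1,e^{\pi\mu}z_2)$ lifts to $B(z_1,\zeta)=(e^{2\pi\mu i}z_1,\zeta+\pi\mu)$. Thus $M_\mu=\mathbb{C}^2/\Gamma$ with $\Gamma=\langle A,B\rangle$, where $A$ and $B$ are holomorphic isometries of the Euclidean metric.

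Next I would verify that $\Gamma\cong\mathbb{Z}^2$ acts freely and properly discontinuously, so that the quotient is a manifold on which $\omega_{\tilde M}$ descends. The generators commute, so a general element of $\Gamma$ acts by $(z_1,\zeta)\mapsto(e^{2\pi\mu i n}z_1,\,\zeta+2\pi i m+\pi\mu n)$. A fixed point forces the $\zeta$-translation $2\pi i m+\pi\mu n=0$; since $\pi\mu$ is real and nonzero while $2\pi i$ is imaginary, this gives $m=n=0$, so the action is free. Properness holds because the $\zeta$-translations range over the discrete lattice $\Lambda=2\pi i\mathbb{Z}+\pi\mu\mathbb{Z}$ while the $z_1$-action is merely a rotation, so only finitely many elements of $\Gamma$ move a given compact set to meet itself. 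Hence $M_\mu$ is a flat Kähler manifold. Noncompactness then follows immediately: $\abs{z_1}$ is $\Gamma$-invariant and unbounded on $\mathbb{C}^2$, so it descends to an unbounded continuous function on $M_\mu$.

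The crux, and where I expect the main work to lie, is the failure of the Stein property, which comes down to locating the right compact subvariety. Consider the $\Gamma$-invariant complex line $L=\set{z_1=0}\subset\mathbb{C}^2$; both $A$ and $B$ preserve $L$ and act on its $\zeta$-coordinate by translation by $2\pi i$ and $\pi\mu$ respectively. These two vectors are $\mathbb{R}$-linearly independent (one imaginary, one real and nonzero), so they generate the rank-two lattice $\Lambda$, and the image of $L$ in $M_\mu$ is the complex torus $\mathbb{C}/\Lambda$, an elliptic curve embedded as a compact complex submanifold (embeddedness, rather than mere immersion, again following from freeness and properness). A Stein manifold admits a strictly plurisubharmonic exhaustion and therefore cannot contain any positive-dimensional compact complex subvariety, since plurisubharmonic functions are constant on such a subvariety by the maximum principle. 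The presence of $\mathbb{C}/\Lambda$ thus rules out the Stein property and completes the proof; this elliptic curve is precisely the genus-one obstruction anticipated in the introduction.
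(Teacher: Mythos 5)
Your proof is correct and follows essentially the same route as the paper: flatness comes from the logarithmic change of coordinate $\zeta=\log z_2$ that makes the metric Euclidean, and the Stein property fails because the image of $\set{z_1=0}$ in $M_\mu$ is a compact genus-one curve (the torus $\mathbb{C}/(2\pi i\mathbb{Z}+\pi\mu\mathbb{Z})$), which no Stein manifold can contain. The paper states these facts more tersely, leaving the quotient structure, the freeness/properness of the action, and noncompactness implicit, whereas you verify them explicitly; the underlying argument is the same.
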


\begin{proof}
  With the given K\"ahler form, $\tilde M$ is locally isometric to Euclidean space under the map $(z_1,z_2)\mapsto(z_1,\log z_2)$ for an appropriate branch of $\log z_2$, so $\tilde M$ is a flat manifold.  Since $M_\mu$ is locally isometric to $\tilde M$, $M_\mu$ is also flat.


  Set
  \begin{equation}
  \label{eq:tilde_S_defn}
    \tilde S=\{z\in\tilde M:z_1=0\}
  \end{equation}
  and
  \begin{equation}
  \label{eq:S_defn}
    S_\mu=\{[z]\in M_\mu:z\in\tilde S\}.
  \end{equation}
  Then $S_\mu$ is a compact Riemann surface of genus one.  Since $M_\mu$ contains a compact Riemann surface, it cannot be Stein.

\end{proof}

Now we may define our key domain:
\begin{defn}
\label{defn:Omega}
  Set
  \[
    \tilde\Omega=\set{z\in\tilde M:\abs{z_1+e^{i\log|z_2|^2}}^2<1}.
  \]
  For $\mu>1$, we may define
  \[
    \Omega_\mu=\set{[z]\in M_\mu:z\in\tilde\Omega}.
  \]
\end{defn}

As in \cite{Bar86} and \cite{Kis91}, $\Omega_\mu$ is an explicit formulation of an example constructed in \cite{DiOh82}.  Observe that $\mu$ is an integer in both Barrett's construction and Kiselman's construction, and hence they omit the rotation in the $z_1$ variable.  We will see that non-integer values of $\mu$ are essential in recovering the full range of Sobolev regularity for the Bergman projection. 
\begin{prop}
\label{prop:Omega_properties}
  For $\Omega_\mu\subset M_\mu$ defined by Definition \ref{defn:Omega}, $\Omega_\mu$ is a relatively compact Stein domain with smooth, real-analytic boundary, but $\overline\Omega_\mu$ admits no Stein neighborhood in $M_\mu$.
\end{prop}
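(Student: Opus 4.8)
The plan is to work throughout with the real-analytic defining function $\rho(z)=\abs{z_1+e^{i\log\abs{z_2}^2}}^2-1$ on $\tilde M$ and to establish the four assertions in turn. First I would record that $\rho$ is invariant under the identification $(z_1,z_2)\sim(e^{2\pi\mu i}z_1,e^{\pi\mu}z_2)$: since $\log\abs{z_2}^2\mapsto\log\abs{z_2}^2+2\pi\mu$, the unimodular factor $e^{i\log\abs{z_2}^2}$ is multiplied by $e^{2\pi\mu i}$, exactly matching the rotation applied to $z_1$, so $z_1+e^{i\log\abs{z_2}^2}$ is multiplied by a unimodular constant and $\rho$ descends to $M_\mu$. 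Thus $\Omega_\mu=\set{\rho<0}$ is well defined. Because $\abs{z_1+e^{i\log\abs{z_2}^2}}<1$ forces $\abs{z_1}<2$, and the $z_2$-quotient is compact (it contains the curve $S_\mu$ of Proposition \ref{prop:M_properties} as a section), $\overline{\Omega}_\mu$ lies in the compact set $\set{\abs{z_1}\le 2}$, giving relative compactness. For smoothness I would compute $\partial_{\bar z_1}\rho=z_1+e^{i\log\abs{z_2}^2}$, whose modulus is $1$ on $\partial\Omega_\mu$; hence $d\rho\neq 0$ there and $\partial\Omega_\mu$ is a real-analytic hypersurface.

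Next I would compute $\ddbar\rho$ restricted to the complex tangent space $\set{v:\partial_{z_1}\rho\,v_1+\partial_{z_2}\rho\,v_2=0}$ of $\partial\Omega_\mu$; after substituting $2\re(z_1 e^{-i\log\abs{z_2}^2})=-\abs{z_1}^2$ on $\set{\rho=0}$ the Levi form collapses to a positive multiple of $\abs{z_1}^2$. Thus $\partial\Omega_\mu$ is strongly pseudoconvex off $z_1=0$ and weakly pseudoconvex along $S_\mu=\set{z_1=0}$, so $\Omega_\mu$ is pseudoconvex. I would emphasize that this is not yet enough: since $M_\mu$ is not Stein (Proposition \ref{prop:M_properties}), a relatively compact pseudoconvex domain need not be Stein, and the compact curve $S_\mu\subset\partial\Omega_\mu$ obstructs the usual solution of the Levi problem. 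Indeed the full complex Hessian of $\rho$ has a negative eigenvalue near $S_\mu$ (its determinant is a negative multiple of $2\re(z_1 e^{-i\log\abs{z_2}^2})+1$, which is negative as $z_1\to 0$), so no naive regularization of $-\log(-\rho)$ is plurisubharmonic there, consistent with the failure of hyperconvexity in Corollary \ref{cor:hyperconvex}.

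To obtain Stein-ness I would instead construct the biholomorphism $\Phi\colon\Omega_\mu\to D$ onto the Reinhardt domain $D$ of the introduction, verifying that its components are invariant under the deck transformation so that it descends to the quotient. Rewriting $D=\set{w:0<\abs{w_1},\ \abs{w_1}^\mu<\cos(\log\abs{w_2}^2)}$ clarifies the role of $\arccos$: it linearizes the rotation of the disk center $-e^{i\log\abs{z_2}^2}$, converting the rotating-disk description of $\Omega_\mu$ into the Reinhardt description, and it must send the compact curve $S_\mu$ to the excluded locus $\set{w_1=0}$, which is exactly why $\Phi$ cannot extend across $S_\mu$. Granting $\Phi$, Stein-ness follows because $D$ is pseudoconvex: in logarithmic coordinates $x=\log\abs{w_1}$, $y=\log\abs{w_2}$, $D$ becomes $\set{x<0,\ \abs{y}<\frac{1}{2}\arccos(e^{\mu x})}$, and $g(x)=\frac{1}{2}\arccos(e^{\mu x})$ satisfies $g''(x)=-\frac{\mu^2 e^{\mu x}}{2(1-e^{2\mu x})^{3/2}}<0$, so the logarithmic image is convex; as $D$ meets neither coordinate axis, this makes $D$ a Reinhardt domain of holomorphy, hence Stein, so $\Omega_\mu$ is Stein. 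I expect the construction and surjectivity of $\Phi$ to be the main obstacle, precisely because the defining function of $\Omega_\mu$ involves $\bar z_2$ through $\log\abs{z_2}^2$, and because one must reconcile the $\log\abs{z_2}^2$ direction — a circle of circumference $2\pi\mu$ in $M_\mu$ — with the bounded $w_2$-annulus, using the $e^{2\pi\mu i}$ twist in the $z_1$-fiber.

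Finally, for the nonexistence of a Stein neighborhood of $\overline{\Omega}_\mu$, I would use that $\rho=0$ whenever $z_1=0$, so $S_\mu\subset\partial\Omega_\mu\subset\overline{\Omega}_\mu$, and every open $U\supset\overline{\Omega}_\mu$ contains the compact genus-one curve $S_\mu$. On any Stein $U$, holomorphic functions restrict to constants on the compact curve $S_\mu$ by the maximum principle and hence cannot separate its points, contradicting Stein-ness; therefore no neighborhood of $\overline{\Omega}_\mu$ is Stein. This also shows the two Stein statements are consistent: the open domain $\Omega_\mu$ avoids $S_\mu$ and is Stein, whereas any neighborhood of its closure is forced to contain $S_\mu$.
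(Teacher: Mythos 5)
Most of your proposal coincides with the paper's own proof. The invariance of $\rho$ under the deck transformation, the non-vanishing of the gradient on the boundary (the paper phrases it contrapositively: $\partial\tilde\rho/\partial z_1=0$ forces $z_1=-e^{i\log|z_2|^2}$, where $\tilde\rho=-1$, which is your modulus-one observation), and the Levi form computation all match: the paper's expression $\ddbar\tilde\rho(Z,\bar Z)=2(-x)(\tilde\rho+1)/|z_2|^2$ with $x=\re\left(z_1e^{-i\log|z_2|^2}\right)$ reduces on $\{\tilde\rho=0\}$ to $|z_1|^2/|z_2|^2$, exactly your ``positive multiple of $|z_1|^2$.'' Your last paragraph on Stein neighborhoods is also the paper's argument, with the standard point-separation reasoning spelled out. (One small slip in an inessential aside: the determinant of the full complex Hessian is $-(2x+1)/|z_2|^2$, which is negative near $S_\mu$ because $2x+1\to 1>0$ there, not because $2\re\left(z_1e^{-i\log|z_2|^2}\right)+1$ becomes negative; the conclusion that the Hessian has a negative eigenvalue is nevertheless right.)

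The genuine gap is the Steinness step. You rightly note that pseudoconvexity of the boundary alone cannot suffice since $M_\mu$ is not Stein, but your remedy --- ``granting $\Phi$'' --- assumes exactly the hardest ingredient. The biholomorphism $\Omega_\mu\to D_\mu$ is the paper's Lemma \ref{lem:biholomorphism}, whose proof is a substantial computation: one must show that the branch $\log^{|z_2|}(z_1)$ normalized by $0\leq\im\log^{|z_2|}(z_1)-\log|z_2|^2<2\pi$ is holomorphic and locally constant in $z_2$ on $\tilde\Omega$, that $\tilde\varphi$ maps $\tilde\Omega$ onto $D_\mu$, and that the fibers of $\tilde\varphi$ are precisely the orbits of the deck transformation, so that it descends to a bijection on the quotient. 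Your sketch (local branches $\left(z_1/2\right)^{1/\mu}$, the heuristic that $\arccos$ ``linearizes the rotation'') does not substitute for this, and you explicitly flag it as the main unresolved obstacle. So as written the proof of Steinness is incomplete. I do want to credit that the route itself is viable and non-circular: the paper's proof of Lemma \ref{lem:biholomorphism} uses only the defining function, not Steinness, and your log-convexity computation is correct --- $g(x)=\frac{1}{2}\arccos(e^{\mu x})$ is concave, so $D_\mu$ is a logarithmically convex Reinhardt domain avoiding both coordinate axes, hence pseudoconvex in $\mathbb{C}^2$, hence Stein by the solution of the Levi problem, and Steinness is a biholomorphic invariant.

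What the paper does instead is far shorter, and you already possess the needed ingredient. Your Levi-form computation shows that $\partial\Omega_\mu$ is strictly pseudoconvex wherever $z_1\neq 0$; in particular the boundary is pseudoconvex and has at least one strictly pseudoconvex point. The paper then cites \cite{DiOh82}, where it is shown that a relatively compact, smoothly bounded, pseudoconvex domain in a complex manifold whose boundary contains a strictly pseudoconvex point is Stein. This converts your ``not yet enough'' concern into a one-line conclusion, with no need to build $\Phi$ at this stage; the biholomorphism is then developed later in the paper, where it is needed for the Bergman-projection analysis rather than for Steinness.
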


\begin{proof}
  Let
  \begin{equation}
  \label{eq:tilde_rho_defn}
    \tilde\rho(z)=\abs{z_1+e^{i\log|z_2|^2}}^2-1
  \end{equation}
  be a defining function for $\tilde\Omega$.  Observe that $\tilde\rho(e^{2\pi\mu i}z_1,e^{\pi\mu} z_2)=\tilde\rho(z_1,z_2)$ for all $z\in\tilde M$, so $\tilde\rho$ induces a real-analytic function $\rho_\mu$ defining $\Omega_\mu$ in $M_\mu$.  To confirm that $\tilde\rho$ and $\rho_\mu$ are true defining functions, we must confirm that the gradients do not vanish on the boundaries of each domain.  To that end, we compute
  \[
    \partial\tilde\rho(z)=\left(\bar z_1+e^{-i\log|z_2|^2}\right)dz_1+\frac{2\im\left(z_1 e^{-i\log|z_2|^2}\right)}{z_2}dz_2.
  \]
  If $\frac{\partial\tilde\rho}{\partial z_1}(z)=0$, then $\bar z_1+e^{-i\log|z_2|^2}=0$, so $z_1=-e^{i\log|z_2|^2}$.  Since $\tilde\rho(-e^{i\log|z_2|^2},z_2)=-1$, we conclude that
  \begin{equation}
  \label{eq:nonvanishing_first_derivative}
    \frac{\partial\tilde\rho}{\partial z_1}(z)\neq 0\text{ whenever }\tilde\rho(z)=0,
  \end{equation}
  and hence $\partial\tilde\rho\neq 0$ on $\partial\tilde\Omega$.  This also implies that $\partial\rho_\mu\neq 0$ on $\partial\Omega_\mu$, so $\rho_\mu$ is a defining function for $\Omega_\mu$ and hence $\Omega_\mu$ has smooth boundary.
  

  Next, we compute
  \begin{multline*}
    \ddbar\tilde\rho(z)=dz_1\wedge d\bar z_1+\frac{i}{z_2}e^{i\log|z_2|^2}dz_2\wedge d\bar z_1-\frac{i}{\bar z_2} e^{-i\log|z_2|^2}dz_1\wedge d\bar z_2\\
    -\frac{2\re\left(z_1e^{-i\log|z_2|^2}\right)}{|z_2|^2}dz_2\wedge d\bar z_2.
  \end{multline*}
  If we set $x=\re\left(z_1 e^{-i\log|z_2|^2}\right)$ and $y=\im\left(z_1 e^{-i\log|z_2|^2}\right)$, then we may write
  \begin{gather*}
    \tilde\rho(z)=x^2+y^2+2x,\\
    \partial\tilde\rho(z)=\left(x-iy+1\right)e^{-i\log|z_2|^2}dz_1+\frac{2y}{z_2}dz_2,
  \end{gather*}
  and
  \begin{multline*}
    \ddbar\tilde\rho(z)=dz_1\wedge d\bar z_1+\frac{i}{z_2}e^{i\log|z_2|^2}dz_2\wedge d\bar z_1-\frac{i}{\bar z_2} e^{-i\log|z_2|^2}dz_1\wedge d\bar z_2\\
    -\frac{2x}{|z_2|^2}dz_2\wedge d\bar z_2.
  \end{multline*}
  On $\partial\tilde\Omega$, we set $Z=\frac{2y}{z_2}\frac{\partial}{\partial z_1}-\left(x-iy+1\right)e^{-i\log|z_2|^2}\frac{\partial}{\partial z_2}$, so that $Z$ spans $T^{1,0}(\partial\tilde\Omega)$.  We compute the Levi-form via
  \[
    \ddbar\tilde\rho(Z,\bar Z)(z)=-\frac{2x((x+1)^2+y^2)}{|z_2|^2}=\frac{2(-x)(\tilde\rho(z)+1)}{|z_2|^2}.
  \]
  When $\tilde\rho(z)=0$, $x\leq 0$, and hence the Levi-form of $\tilde\rho$ is non-negative on $\partial\tilde\Omega$.  This implies that $\tilde\Omega$, and hence $\Omega_\mu$, has a pseudoconvex boundary.  When $x<0$, $\partial\tilde\Omega$ is strictly pseudoconvex, and hence $\partial\Omega_\mu$ must have at least one strictly pseudoconvex point.  As shown in \cite{DiOh82}, this guarantees that $\Omega_\mu$ is Stein.

  For $\tilde S$ defined by \eqref{eq:tilde_S_defn}, $\tilde\rho|_{\tilde S}\equiv 0$, and hence $\tilde S\subset\partial\tilde\Omega$.  As a result, $S_\mu$ defined by \eqref{eq:S_defn} satisfies $S_\mu\subset\partial\Omega_\mu$.  Since $\partial\Omega_\mu$ contains a compact Riemann surface, $\overline{\Omega_\mu}$ admits no Stein neighborhood.
\end{proof}

We next observe that $\Omega_\mu$ in the given metric exhibits a high degree of symmetry, as the following lemma makes explicit:
\begin{lem}
\label{lem:isometry}
  For $\theta_1,\theta_2\in\mathbb{R}$, define a biholomorphic map $\tilde\psi:\tilde M\rightarrow\tilde M$ by
  \begin{equation}
  \label{eq:psi_defn}
    \tilde\psi_{\theta_1,\theta_2}(z)=\left(e^{i\mu\theta_1}z_1,e^{\frac{\mu}{2}\theta_1+i\theta_2}z_2\right).
  \end{equation}
  Then $\tilde\psi_{\theta_1,\theta_2}$ is an isometry on $\tilde M$ that induces a biholomorphic isometry $\psi_{\theta_1,\theta_2,\mu}:\Omega_\mu\rightarrow\Omega_\mu$.
\end{lem}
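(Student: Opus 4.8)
The plan is to verify everything at the level of the cover $\tilde M$ and then descend. First I would record that $\tilde\psi_{\theta_1,\theta_2}$ is a holomorphic automorphism of $\tilde M=\mathbb{C}\times(\mathbb{C}\setminus\{0\})$: each component is a nonzero scalar multiple of a coordinate, so the map is a biholomorphism with inverse $\tilde\psi_{-\theta_1,-\theta_2}$, and it preserves the punctured-plane factor because the $z_2$-multiplier $e^{\frac{\mu}{2}\theta_1+i\theta_2}$ never vanishes.

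Second, to see that $\tilde\psi_{\theta_1,\theta_2}$ is an isometry I would pull back $\omega_{\tilde M}$ from \eqref{eq:Kahler_form} term by term. Writing $w=\tilde\psi_{\theta_1,\theta_2}(z)$, the first term is immediately invariant, since $dw_1\wedge d\bar w_1=dz_1\wedge d\bar z_1$ as the unit-modulus factor $e^{i\mu\theta_1}$ cancels against its conjugate. For the second term the key observation is that the modulus dilation and the weight conspire: one has $\abs{w_2}^2=e^{\mu\theta_1}\abs{z_2}^2$ while $dw_2\wedge d\bar w_2=e^{\mu\theta_1}\,dz_2\wedge d\bar z_2$, so the factor $e^{\mu\theta_1}$ in the numerator cancels the one appearing in the denominator $\abs{w_2}^2$, giving $\frac{i}{2\abs{w_2}^2}dw_2\wedge d\bar w_2=\frac{i}{2\abs{z_2}^2}dz_2\wedge d\bar z_2$. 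Hence $\tilde\psi_{\theta_1,\theta_2}^*\omega_{\tilde M}=\omega_{\tilde M}$.

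Third, to descend to $M_\mu$ I would check that $\tilde\psi_{\theta_1,\theta_2}$ commutes with the deck transformation $T(z_1,z_2)=(e^{2\pi\mu i}z_1,e^{\pi\mu}z_2)$ generating the equivalence relation of Definition \ref{defn:M}. Both $T\circ\tilde\psi_{\theta_1,\theta_2}$ and $\tilde\psi_{\theta_1,\theta_2}\circ T$ multiply $z_1$ by $e^{2\pi\mu i+i\mu\theta_1}$ and $z_2$ by $e^{\pi\mu+\frac{\mu}{2}\theta_1+i\theta_2}$, so they agree. Therefore $\tilde\psi_{\theta_1,\theta_2}$ carries each $\langle T\rangle$-orbit to a $\langle T\rangle$-orbit and induces a well-defined holomorphic self-map of $M_\mu$; biholomorphy and the isometry property pass to the quotient because $M_\mu$ is locally biholomorphic and locally isometric to $\tilde M$.

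Finally, the one computation requiring a little care, which I expect to be the crux, is the invariance of the domain. I would show $\tilde\rho\circ\tilde\psi_{\theta_1,\theta_2}=\tilde\rho$ for the defining function $\tilde\rho$ of \eqref{eq:tilde_rho_defn}. Since $\log\abs{w_2}^2=\mu\theta_1+\log\abs{z_2}^2$, the exponential term transforms as $e^{i\log\abs{w_2}^2}=e^{i\mu\theta_1}e^{i\log\abs{z_2}^2}$, which exactly matches the phase $e^{i\mu\theta_1}$ picked up by $w_1=e^{i\mu\theta_1}z_1$. Thus $w_1+e^{i\log\abs{w_2}^2}=e^{i\mu\theta_1}\bigl(z_1+e^{i\log\abs{z_2}^2}\bigr)$, and taking squared moduli leaves $\tilde\rho$ unchanged, so $\tilde\psi_{\theta_1,\theta_2}$ maps $\tilde\Omega$ onto itself and the induced map restricts to a biholomorphic isometry $\psi_{\theta_1,\theta_2,\mu}:\Omega_\mu\to\Omega_\mu$. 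The subtlety here is precisely that the $z_1$-rotation rate in \eqref{eq:psi_defn} is forced to equal the increment $\mu\theta_1$ in $\log\abs{z_2}^2$ produced by the $z_2$-dilation; this alignment is exactly what makes the rotation in the $z_1$ variable, absent in the integer constructions of Barrett and Kiselman, necessary for general $\mu$.
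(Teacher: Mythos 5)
Your proof is correct and complete. The paper omits this proof entirely (``the proof is straightforward, so we omit the details''), and your argument supplies exactly the expected details in the natural way: pulling back the K\"ahler form term by term, checking that $\tilde\psi_{\theta_1,\theta_2}$ commutes with the deck transformation $(z_1,z_2)\mapsto(e^{2\pi\mu i}z_1,e^{\pi\mu}z_2)$ so that it descends to $M_\mu$, and verifying $\tilde\rho\circ\tilde\psi_{\theta_1,\theta_2}=\tilde\rho$ via the phase alignment $e^{i\log\abs{w_2}^2}=e^{i\mu\theta_1}e^{i\log\abs{z_2}^2}$, which is indeed the one point where the choice of rotation rate in the $z_1$ variable matters.
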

The proof is straightforward, so we omit the details.



We now prove the crucial result that $\Omega_\mu$ is biholomorphic to a Reinhardt domain, which we will denote $D_\mu$.
\begin{lem}
\label{lem:biholomorphism}
  For $t>0$ and $z\in\mathbb{C}\backslash\{0\}$, let $\log^t(z)$ denote the unique logarithm of $z$ satisfying
  \begin{equation}
  \label{eq:log_branch}
    0\leq\im\log^t(z)-\log t^2<2\pi.
  \end{equation}
  Then $\log^{|z_2|}(z_1)$ is holomorphic on $\tilde\Omega$ and locally constant in $z_2$.  If we set
  \begin{equation}
  \label{eq:D_defn}
    D_\mu=\set{w\in\mathbb{C}^2:0<|w_1|<1\text{ and }\abs{\log|w_2|^2}<\arccos\left(|w_1|^{\mu}\right)}
  \end{equation}
  and define $\tilde\varphi:\tilde\Omega\rightarrow D_\mu$ by
  \begin{equation}
  \label{eq:varphi_defn}
    \tilde\varphi(z)=\left(\exp\left(\frac{\log^{|z_2|}(z_1)-\log 2}{\mu}\right),z_2\exp\left(\frac{1}{2}\left(\pi+i \log^{|z_2|}(z_1)\right)\right)\right),
  \end{equation}
  then $\tilde\varphi$ induces a biholomorphism $\varphi_\mu$ between $\Omega_\mu$ and $D_\mu$.  For $\theta_1,\theta_2\in\mathbb{R}$, the isometry $\psi_{\theta_1,\theta_2,\mu}$ induced by \eqref{eq:psi_defn} satisfies
  \begin{equation}
  \label{eq:isometry_push_forward}
    \varphi_\mu(\psi_{\theta_1,\theta_2,\mu}([z]))=(e^{i\theta_1}(\varphi_\mu)_1([z]),e^{i\theta_2}(\varphi_\mu)_2([z]))
  \end{equation}
  for all $[z]\in\Omega_\mu$.
\end{lem}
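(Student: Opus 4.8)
The plan is to push the whole analysis down to the disk $\Delta=\set{w\in\mathbb C:|w+1|<1}$ via the auxiliary variable $w=z_1e^{-i\log|z_2|^2}=x+iy$ already introduced above, under which $\tilde\Omega=\set{\tilde\rho<0}$ becomes exactly $\set{w\in\Delta}$, since $\tilde\rho=|w|^2+2\re w=|w+1|^2-1$. Writing $w=\rho e^{i\alpha}$, the geometry of $\Delta$ forces $\rho\in(0,2)$ and $\alpha=\arg w\in(\pi/2,3\pi/2)$ at interior points; in particular $z_1\neq0$ on $\tilde\Omega$, so $\log^{|z_2|}(z_1)$ is defined. First I would identify this branch: since $z_1=we^{i\log|z_2|^2}$, the number $\log|z_1|+i(\alpha+\log|z_2|^2)$ is a logarithm of $z_1$ whose imaginary part exceeds $\log|z_2|^2$ by $\alpha\in(\pi/2,3\pi/2)\subset(0,2\pi)$, hence lies strictly inside the admissible window of \eqref{eq:log_branch}; by uniqueness it equals $\log^{|z_2|}(z_1)$. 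Because $\alpha$ stays at distance at least $\pi/2$ from both endpoints of the window, a small perturbation of $z_2$ with $z_1$ fixed cannot change which integer translate of $\arg z_1$ falls in the window, giving local constancy in $z_2$; for fixed $z_2$ it is a continuous, hence holomorphic, branch of $\log z_1$. Thus $\log^{|z_2|}(z_1)$ is holomorphic on $\tilde\Omega$ and locally independent of $z_2$, so $\tilde\varphi$ in \eqref{eq:varphi_defn} is holomorphic.

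Next I would verify that $\tilde\varphi$ carries $\tilde\Omega$ onto $D_\mu$ by matching the two defining conditions. Writing $L=\log^{|z_2|}(z_1)=\log|z_1|+i(\alpha+\log|z_2|^2)$, a direct computation gives $|w_1|=(|z_1|/2)^{1/\mu}$, so $0<|w_1|<1$ is equivalent to $0<\rho=|z_1|<2$, with $|w_1|^\mu=\rho/2$; similarly the factor $e^{-\frac12\log|z_2|^2}=|z_2|^{-1}$ collapses the $z_2$-dependence and yields $\log|w_2|^2=\pi-\alpha$. Setting $\beta=\pi-\alpha=\log|w_2|^2\in(-\pi/2,\pi/2)$, the inequality cutting out $\Delta$, namely $\rho<-2\cos\alpha=2\cos\beta$, is precisely $|\beta|<\arccos(\rho/2)$, i.e. $\abs{\log|w_2|^2}<\arccos(|w_1|^\mu)$. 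Hence $z\in\tilde\Omega$ if and only if $\tilde\varphi(z)\in D_\mu$, and in particular $\tilde\varphi(\tilde\Omega)\subseteq D_\mu$.

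To obtain $\varphi_\mu$ I would check that $\tilde\varphi$ is invariant under the generator $g(z)=(e^{2\pi\mu i}z_1,e^{\pi\mu}z_2)$ of the quotient $M_\mu$: applying $g$ shifts the window by $2\pi\mu$ and replaces $L$ by $L+2\pi\mu i$, after which the factors $e^{2\pi i}=1$ and $e^{-\pi\mu}$ arising in the two components of \eqref{eq:varphi_defn} exactly cancel the change, so $\tilde\varphi\circ g=\tilde\varphi$ and $\tilde\varphi$ descends to $\varphi_\mu:\Omega_\mu\to D_\mu$. For bijectivity I would invert explicitly: a choice of $\log w_1$ forces $L=\mu\log w_1+\log 2$, hence $z_1=\exp L$ and $z_2=w_2e^{-\pi/2}\exp\left(-\frac{i}{2}L\right)$, and the residual ambiguity $\log w_1\mapsto\log w_1+2\pi i$ sends $(z_1,z_2)$ to $g(z_1,z_2)$. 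Thus the inverse is a single-valued holomorphic map $D_\mu\to\Omega_\mu$; one then checks it composes with $\varphi_\mu$ to the identity on both sides, the key point being that the window constraint \eqref{eq:log_branch} holds for the recovered $z$ because $\abs{\log|w_2|^2}<\pi/2$ on $D_\mu$, which forces $\log^{|z_2|}(z_1)=L$. Having an explicit holomorphic inverse, $\varphi_\mu$ is a biholomorphism.

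Finally, \eqref{eq:isometry_push_forward} follows from the same bookkeeping applied to $\tilde\psi_{\theta_1,\theta_2}$ in \eqref{eq:psi_defn}: the window shifts by $\mu\theta_1$ and $L$ becomes $L+i\mu\theta_1$, so the first component of $\tilde\varphi$ acquires the factor $\exp(i\mu\theta_1/\mu)=e^{i\theta_1}$, while in the second component the real dilation $e^{\frac{\mu}{2}\theta_1}$ is cancelled by $\exp\left(-\frac12\mu\theta_1\right)$ and only $e^{i\theta_2}$ survives, giving \eqref{eq:isometry_push_forward}. I expect the main obstacle to be the logarithmic bookkeeping rather than any hard estimate: one must confirm at each stage---on $\tilde\Omega$, under $g$, and under $\tilde\psi_{\theta_1,\theta_2}$---that the relevant logarithm lands in the half-open window of \eqref{eq:log_branch}, and in particular that the $2\pi i$ indeterminacy in the inversion matches the orbits of $\langle g\rangle$ exactly, since this is precisely what makes $\varphi_\mu$ well defined and injective on the quotient.
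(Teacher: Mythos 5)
Your proof is correct and follows essentially the same route as the paper's: identify the branch via the window condition (the paper phrases your $\alpha\in(\pi/2,3\pi/2)$ as $\abs{\pi-\im\log^{|z_2|}(z_1)+\log|z_2|^2}<\pi/2$, which gives holomorphy and local constancy in $z_2$), translate the defining inequality of $\tilde\Omega$ into that of $D_\mu$, invert explicitly and check that the $2\pi i$ ambiguity in $\log w_1$ matches the $\langle g\rangle$-orbits, and verify the branch shift $L\mapsto L+i\mu\theta_1$ with $\ell=0$ for the isometry. Your auxiliary disk variable $z_1e^{-i\log|z_2|^2}$ with polar coordinates, and the separate verification that $\tilde\varphi\circ g=\tilde\varphi$, are only cosmetic repackagings of the paper's computations.
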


\begin{rem}
  Observe that locally, \eqref{eq:varphi_defn} implies that $$\tilde\varphi(z)=\left(\left(\frac{z_1}{2}\right)^{\frac{1}{\mu}},e^{\frac{\pi}{2}}z_1^{\frac{i}{2}}z_2\right)$$ for some local holomorphic branches of the appropriate power functions.  However, the definition given in \eqref{eq:varphi_defn} is essential to capture the dependence of this branch on $|z_2|$.
\end{rem}

\begin{proof}
  Let $\tilde\rho$ be the defining function for $\tilde\Omega$ given by \eqref{eq:tilde_rho_defn}.  Since $\tilde\rho(0,z_2)=0$ for all $z_2\in\mathbb{C}\backslash\{0\}$, $z_1\neq 0$ on $\tilde\Omega$, and hence $\log^{|z_2|}(z_1)$ is well-defined on $\tilde\Omega$.  We can rewrite $\tilde\rho$ in the form
  \begin{equation}
  \label{eq:tilde_rho_alternate}
    \tilde\rho(z)=|z_1|^2+2\re\left(z_1 e^{-i\log|z_2|^2}\right).
  \end{equation}
  For $z\in\tilde\Omega$, \eqref{eq:tilde_rho_alternate} gives us
  \begin{multline*}
    \tilde\rho(z)=\tilde\rho(e^{\log^{|z_2|}(z_1)},z_2)=\\
    e^{2\re\log^{|z_2|}(z_1)}+2e^{\re\log^{|z_2|}(z_1)}\cos(\im\log^{|z_2|}(z_1)-\log|z_2|^2).
  \end{multline*}
  Hence $\tilde\rho(z)<0$ if and only if
  \begin{align*}
    \frac{1}{2}e^{\re\log^{|z_2|}(z_1)}&<-\cos(\im\log^{|z_2|}(z_1)-\log|z_2|^2)\\
    &=\cos(\pi-\im\log^{|z_2|}(z_1)+\log|z_2|^2).
  \end{align*}
  Since $\abs{\pi-\im\log^{|z_2|}(z_1)+\log|z_2|^2}\leq\pi$ by \eqref{eq:log_branch}, we see that $z\in\tilde\Omega$ if and only if $\re\log^{|z_2|}(z_1)<\log 2$ and
  \[
    \abs{\pi-\im\log^{|z_2|}(z_1)+\log|z_2|^2}<\arccos\left(\frac{1}{2}e^{\re\log^{|z_2|}(z_1)}\right).
  \]
  Since $\re\log^{|z_2|}(z_1)=\log|z_1|$, we see that $z\in\tilde\Omega$ if and only if $0<|z_1|<2$ and
  \begin{equation}
  \label{eq:defining_function_interior}
    \abs{\pi-\im\log^{|z_2|}(z_1)+\log|z_2|^2}<\arccos\left(\frac{1}{2}|z_1|\right).
  \end{equation}
  In particular, \eqref{eq:defining_function_interior} guarantees that $\abs{\pi-\im\log^{|z_2|}(z_1)+\log|z_2|^2}<\frac{\pi}{2}$ on $\tilde\Omega$, so $\log^{|z_2|}(z_1)$ is a holomorphic function on $\tilde\Omega$ and $\log^{|z_2|}(z_1)$ is locally constant on $\tilde\Omega$ with respect to $z_2$.

  Fix $z\in\tilde\Omega$.  Then
  \[
    \abs{\tilde\varphi_1(z)}=\abs{\exp\left(\frac{\log^{|z_2|}(z_1)-\log 2}{\mu}\right)}=\exp\left(\frac{\log|z_1|-\log 2}{\mu}\right).
  \]
  If we solve for $|z_1|$ in terms of $|\tilde\varphi_1(z)|$, we obtain
  \[
    |z_1|=\exp\left(\mu\log\abs{\tilde\varphi_1(z)}+\log 2\right),
  \]
  so we have
  \begin{equation}
  \label{eq:abs_z_1}
    |z_1|=2\abs{\tilde\varphi_1(z)}^{\mu}.
  \end{equation}
  Next, we compute
  \[
    \log\abs{\tilde\varphi_2(z)}^2=\log\abs{z_2\exp\left(\frac{1}{2}\left(\pi+i\log^{|z_2|}(z_1)\right)\right)}^2,
  \]
  so
  \[
    \log\abs{\tilde\varphi_2(z)}^2=\log\abs{z_2}^2+\pi-\im\log^{|z_2|}(z_1),
  \]
  Since $\re\log^{|z_2|}(z_1)=\log|z_1|$, this implies
  \[
    \log|z_1|-i\log\abs{\tilde\varphi_2(z)}^2=-i\log\abs{z_2}^2-i\pi+\log^{|z_2|}(z_1),
  \]
  which we may exponentiate to obtain
  \[
    |z_1|e^{-i\log\abs{\tilde\varphi_2(z)}^2}=-z_1 e^{-i\log\abs{z_2}^2}.
  \]
  Dividing by $|z_1|$ gives us
  \begin{equation}
  \label{eq:e_i_log_abs_z_2_squared}
    \frac{z_1}{|z_1|}e^{-i\log|z_2|^2}=-e^{-i\log\abs{\tilde\varphi_2(z)}^2}.
  \end{equation}
  Substituting \eqref{eq:abs_z_1} and \eqref{eq:e_i_log_abs_z_2_squared} in \eqref{eq:tilde_rho_alternate}, we see that
  \begin{equation}
  \label{eq:defining_function_varphi}
    \tilde\rho(z)=4\abs{\tilde\varphi_1(z)}^{\mu}\left(\abs{\tilde\varphi_1(z)}^{\mu}-\cos\left(\log\abs{\tilde\varphi_2(z)}^2\right)\right).
  \end{equation}
  Since $z\in\tilde\Omega$ implies that $\tilde\rho(z)<0$, \eqref{eq:defining_function_varphi} guarantees that
  \[
    \cos\left(\log\abs{\tilde\varphi_2(z)}^2\right)>\abs{\tilde\varphi_1(z)}^\mu>0.
  \]
  Hence $1>|\tilde\varphi_1(z)|>0$ and $\abs{\log\abs{\tilde\varphi_2(z)}^2}<\arccos\left(\abs{\tilde\varphi_1(z)}^\mu\right)$, so $\tilde\varphi(z)\in D_\mu$.

  For the converse, we fix $w\in D_\mu$.  Let $\log w_1$ denote the principal logarithm of $w_1$.  Then $w_1=\tilde\varphi_1(z)$ if and only if
  \begin{equation}
  \label{eq:log_z_1_preimage}
    \log^{|z_2|}(z_1)=\mu\left(\log w_1+2\pi k i\right)+\log 2
  \end{equation}
  for some $k\in\mathbb{Z}$.  Exponentiating both sides of \eqref{eq:log_z_1_preimage} gives us
  \begin{equation}
  \label{eq:z_1_preimage}
    z_1=2\exp\left(\mu\log w_1+2\pi\mu k i\right),
  \end{equation}
  so
  \begin{equation}
  \label{eq:abs_z_1_preimage}
    |z_1|=2\exp\left(\mu\log|w_1|\right)=2|w_1|^{\mu}
  \end{equation}
  Since $0<|w_1|<1$, \eqref{eq:abs_z_1_preimage} implies $0<|z_1|<2$.  We also know that $w_2=\tilde\varphi_2(z)$ if and only if
  \[
    z_2=w_2\exp\left(-\frac{1}{2}\left(\pi+i\log^{|z_2|}(z_1)\right)\right).
  \]
  If $w=\tilde\varphi(z)$, we may substitute \eqref{eq:log_z_1_preimage} to obtain
  \[
    z_2=w_2\exp\left(-\frac{1}{2}\left(\pi+i\left(\mu\left(\log w_1+2\pi k i\right)+\log 2\right)\right)\right),
  \]
  and so
  \begin{equation}
  \label{eq:z_2_preimage}
    z_2=e^{\pi\mu k} w_2\exp\left(-\frac{1}{2}\left(\pi+i\left(\mu\log w_1+\log 2\right)\right)\right).
  \end{equation}
  We use \eqref{eq:z_2_preimage} to compute
  \begin{equation}
  \label{eq:log_z_2_preimage}
    \log|z_2|^2=2\pi\mu k+\log|w_2|^2-\pi+\mu\im\log w_1.
  \end{equation}
  Combining \eqref{eq:log_z_1_preimage} and \eqref{eq:log_z_2_preimage}, we obtain
  \[
    \pi-\im\log^{|z_2|}(z_1)+\log|z_2|^2=\log|w_2|^2,
  \]
  and since \eqref{eq:abs_z_1_preimage} implies
  \[
    \arccos\left(\frac{1}{2}|z_1|\right)=\arccos\left(|w_1|^{\mu}\right),
  \]
  we see that $z$ satisfies \eqref{eq:defining_function_interior}.  We have already shown that $0<|z_1|<2$, so $z\in\tilde\Omega$ whenever $w\in D_\mu$, and hence $\tilde\varphi$ is surjective.  Moreover, \eqref{eq:z_1_preimage} and \eqref{eq:z_2_preimage} imply that any two elements of the preimage of $w$ (parameterized by $k\in\mathbb{Z}$) are in the same equivalence class, so $\tilde\varphi$ induces a well-defined injection $\varphi_\mu$ on $\Omega_\mu$.  We conclude that $\varphi_\mu$ is a biholomorphism from $\Omega_\mu$ to $D_\mu$.

  Finally, we consider the isometry $\tilde\psi_{\theta_1,\theta_2}$.  For $z\in\tilde\Omega$, we have
  \[
    \log^{|(\tilde\psi_{\theta_1,\theta_2})_2(z)|}((\tilde\psi_{\theta_1,\theta_2})_1(z))=\log^{|z_2|}(z_1)+i\mu\theta_1+2\pi\ell i
  \]
  for some $\ell\in\mathbb{Z}$.  Since
  \[
    \log|(\tilde\psi_{\theta_1,\theta_2})_2(z)|^2=\mu\theta_1+\log|z_2|^2,
  \]
  \eqref{eq:log_branch} implies that $\ell=0$, so
  \[
    \log^{|(\tilde\psi_{\theta_1,\theta_2})_2(z)|}((\tilde\psi_{\theta_1,\theta_2})_1(z))=\log^{|z_2|}(z_1)+i\mu\theta_1.
  \]
  With this identity, we may substitute directly in \eqref{eq:varphi_defn} to see \eqref{eq:isometry_push_forward}.

\end{proof}

\begin{cor}
\label{cor:hyperconvex}
  $\Omega_\mu$ is not hyperconvex.
\end{cor}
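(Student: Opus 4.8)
The plan is to reduce the statement to the Reinhardt model via the biholomorphism $\varphi_\mu\colon\Omega_\mu\to D_\mu$ constructed in Lemma \ref{lem:biholomorphism}. Hyperconvexity is a biholomorphic invariant, since it is defined by the existence of a bounded plurisubharmonic exhaustion function and this notion is preserved under biholomorphism; in fact I will prove the stronger assertion that $D_\mu$, and hence $\Omega_\mu$, admits no bounded plurisubharmonic exhaustion at all. I argue by contradiction: suppose $\lambda$ is such a function on the domain $D_\mu$ of \eqref{eq:D_defn}. Normalizing, I may take $-1\le\lambda<0$ with $\lambda(w)\to 0$ as $w$ approaches any point of $\partial D_\mu$, which is the defining boundary behavior of a bounded plurisubharmonic exhaustion (the condition $\{\lambda<c\}\Subset D_\mu$ for every $c<0$ is equivalent to it).

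The key observation is that $D_\mu$ contains a full punctured complex line on which its defining inequality is vacuous. Fix $c\in\mathbb{C}$ with $\abs{c}=1$, so that $\log\abs{c}^2=0$, and consider the slice $D_\mu\cap\{w_2=c\}$. A point $(w_1,c)$ lies in $D_\mu$ exactly when $0<\abs{w_1}<1$ and $0=\abs{\log\abs{c}^2}<\arccos(\abs{w_1}^\mu)$; but $0<\abs{w_1}<1$ already forces $\arccos(\abs{w_1}^\mu)\in(0,\tfrac{\pi}{2})$, so the second condition is automatic. Hence the slice is precisely the punctured unit disc $\set{w_1:0<\abs{w_1}<1}$, and both of its ends --- the circle $\abs{w_1}=1$ and the puncture $w_1=0$ --- sit on $\partial D_\mu$. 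The decisive end is the puncture: the point $(0,c)$ is excluded from $D_\mu$ while the nearby slice points with small $\abs{w_1}>0$ belong to it, so approaching $w_1=0$ genuinely approaches the boundary from inside.

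I then restrict $\lambda$ to this slice. Because $\lambda$ is plurisubharmonic, $g(w_1):=\lambda(w_1,c)$ is subharmonic and bounded on the punctured disc, with $g<0$ everywhere and $g(w_1)\to 0$ as $w_1\to 0$ by the boundary behavior noted above. Since a subharmonic function that is bounded above extends across the polar set $\{0\}$, setting $g(0):=\limsup_{w_1\to 0}g(w_1)=0$ yields a subharmonic function on the full disc $\set{w_1:\abs{w_1}<1}$. This extension satisfies $g\le 0$ with $g(0)=0$, so it attains its maximum at the interior point $w_1=0$; the maximum principle then forces $g\equiv 0$, contradicting $g<0$ on the punctured disc. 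This contradiction shows that no bounded plurisubharmonic exhaustion of $D_\mu$ exists, so $\Omega_\mu$ is not hyperconvex.

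The argument involves no hard estimates: the only points requiring care are the (standard) removable-singularity theorem for subharmonic functions bounded above, which uses nothing beyond boundedness of $\lambda$ and hence upper semicontinuity, and the verification that the puncture $w_1=0$ is approached from within $D_\mu$ so that the exhaustion property delivers $g\to 0$ there. I expect this last bookkeeping --- confirming that the singular end of the slice really is a boundary approach --- to be the only subtlety, and it is immediate from the description of the slice above. It is worth emphasizing that this slice is the analytic manifestation, in the Reinhardt model, of the compact Riemann surface $S_\mu\subset\partial\Omega_\mu$ found in Proposition \ref{prop:Omega_properties}, and its presence is exactly what separates these domains from the hyperconvex worm domain.
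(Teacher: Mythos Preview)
Your proof is correct and follows essentially the same approach as the paper: push a hypothetical bounded plurisubharmonic exhaustion forward along $\varphi_\mu$ to $D_\mu$, restrict to the slice $w_2=1$, and obtain a bounded subharmonic exhaustion of the punctured unit disc, which cannot exist. The paper simply asserts this last nonexistence, whereas you spell it out via the removable-singularity theorem and the maximum principle; otherwise the two arguments are identical.
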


\begin{proof}
  Suppose that $f:\Omega_\mu\rightarrow\mathbb{R}$ is a bounded plurisubharmonic exhaustion function for $\Omega_\mu$.  Let $\varphi_\mu:\Omega_\mu\rightarrow D_\mu$ be the biholomorphic map induced by \eqref{eq:varphi_defn}.  Then $f\circ\varphi_\mu^{-1}$ is a bounded plurisubharmonic exhaustion function for $D_\mu$.  For $\zeta\in\mathbb{C}$, we set $g(\zeta)=f\circ\varphi_\mu^{-1}(\zeta,1)$, so that (using \eqref{eq:D_defn}), $g$ is a bounded subharmonic exhaustion function for the punctured unit-disc in $\mathbb{C}$, but no such function can exist.
\end{proof}

Corollary \ref{cor:hyperconvex} immediately implies that the Diederich-Forn{\ae}ss index of $\Omega_\mu$ is zero, but this is already known since $\partial\Omega_\mu$ contains a compact complex submanifold of positive dimension (see Proposition 5.7 in \cite{AdYu21}).

\section{Special Functions}

Before discussing integration on $\Omega_\mu$ with respect to the flat metric on $M_\mu$, we will need several special functions.  
These are defined by
\begin{equation}
\label{eq:alpha_definition_new}
  \alpha(x,y)=\int_0^1 t^{x-1}(1-t)^{y-1}dt\text{ for all }x,y>0
\end{equation}
and
\begin{equation}
\label{eq:beta_definition_new}
  \beta(x,y)=\int_{-\frac{\pi}{2}}^{\frac{\pi}{2}}(\cos t)^{x-1}e^{yt}dt\text{ for all }x>0\text{ and }y\in\mathbb{R}.
\end{equation}
The basic identities satisfied by $\alpha$ are given in the following lemma.
\begin{lem}
\label{lem:alpha_identities}
  For $x,y>0$, we have
  \begin{gather}
  \label{eq:alpha_symmetry}
    \alpha(x,y)=\alpha(y,x)\\
  \label{eq:alpha_induction_y}
    \alpha(x,y+1)=\frac{y}{x+y}\alpha(x,y)\text{, and}\\
  \label{eq:alpha_induction_x}
    \alpha(x+1,y)=\frac{x}{x+y}\alpha(x,y).
  \end{gather}
\end{lem}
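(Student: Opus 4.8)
The plan is to recognize $\alpha$ as the Euler Beta function and to establish all three identities by elementary manipulations of the defining integral \eqref{eq:alpha_definition_new}. The key background fact is that each integrand appearing below is absolutely integrable on $(0,1)$ precisely because $x,y>0$, and this is exactly what makes every step legitimate.

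First I would prove the symmetry \eqref{eq:alpha_symmetry}, which is immediate from the change of variables $s=1-t$ in \eqref{eq:alpha_definition_new}. This substitution is an orientation-reversing bijection of $(0,1)$ onto itself with $ds=-dt$, and the factor $t^{x-1}(1-t)^{y-1}$ becomes $(1-s)^{x-1}s^{y-1}$, so the transformed integral is exactly $\alpha(y,x)$.

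Next I would extract the two recurrences from a single additive relation together with one integration by parts. Inserting $1=t+(1-t)$ into \eqref{eq:alpha_definition_new} and splitting the integral gives
$$\alpha(x,y)=\int_0^1 t^{x}(1-t)^{y-1}\,dt+\int_0^1 t^{x-1}(1-t)^{y}\,dt=\alpha(x+1,y)+\alpha(x,y+1).$$
Independently, integrating $\alpha(x,y+1)=\int_0^1 t^{x-1}(1-t)^{y}\,dt$ by parts with $u=(1-t)^y$ and $dv=t^{x-1}\,dt$ produces $v=t^x/x$ and $du=-y(1-t)^{y-1}\,dt$; the boundary contribution $\bigl[t^x(1-t)^y/x\bigr]_0^1$ vanishes because $x>0$ kills the endpoint $t=0$ while $y>0$ kills the endpoint $t=1$, leaving $\alpha(x,y+1)=\tfrac{y}{x}\,\alpha(x+1,y)$. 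Substituting the equivalent form $\alpha(x+1,y)=\tfrac{x}{y}\,\alpha(x,y+1)$ into the additive relation yields $\alpha(x,y)=\bigl(\tfrac{x}{y}+1\bigr)\alpha(x,y+1)=\tfrac{x+y}{y}\,\alpha(x,y+1)$, which rearranges to \eqref{eq:alpha_induction_y}. Finally, \eqref{eq:alpha_induction_x} follows by applying \eqref{eq:alpha_induction_y} with the two arguments interchanged and invoking the symmetry \eqref{eq:alpha_symmetry}.

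I do not expect a genuine obstacle here beyond bookkeeping. The only point needing a moment's care is the vanishing of the integrated boundary term in the integration by parts, which is exactly where the hypotheses $x,y>0$ are used, together with the (routine) justification that each piece of the split integral converges for $x,y>0$ so that the splitting and recombination are valid. Everything else is a direct computation.
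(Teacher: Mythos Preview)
Your proof is correct and follows essentially the same route as the paper: both use the substitution $1-t\mapsto t$ for symmetry, the additive splitting $\alpha(x,y)=\alpha(x+1,y)+\alpha(x,y+1)$, an integration by parts yielding $x\,\alpha(x,y+1)=y\,\alpha(x+1,y)$, and then combine these and invoke symmetry to finish. The only cosmetic difference is that the paper obtains the integration-by-parts identity by differentiating $t^x(1-t)^y$ and integrating, whereas you set it up explicitly with $u$ and $dv$.
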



\begin{proof}
  To see \eqref{eq:alpha_symmetry}, we use the substitution $u=1-t$ in \eqref{eq:alpha_definition_new}.

  We easily check that
  \begin{equation}
  \label{eq:alpha_decomposition}
    \alpha(x,y)=\alpha(x,y+1)+\alpha(x+1,y).
  \end{equation}
  Since
  \[
    \frac{\partial}{\partial t}(t^x(1-t)^y)= x t^{x-1}(1-t)^y-y t^x(1-t)^{y-1},
  \]
  we may integrate both sides of this from $0$ to $1$ and obtain
  \begin{equation}
  \label{eq:alpha_induction_new}
    x\alpha(x,y+1)=y\alpha(x+1,y).
  \end{equation}
  Now \eqref{eq:alpha_induction_new} implies that $\alpha(x+1,y)=\frac{x}{y}\alpha(x,y+1)$, so we may substitute this in \eqref{eq:alpha_decomposition} to obtain \eqref{eq:alpha_induction_y}.  If we apply \eqref{eq:alpha_symmetry} to \eqref{eq:alpha_induction_y}, we obtain \eqref{eq:alpha_induction_x}.

\end{proof}

The following estimate will be crucial.
\begin{lem}
\label{lem:alpha_normalized_bound_new}
  For $0\leq s<\frac{1}{2}$ and $x,y>2s$, we have
  \begin{equation}
  \label{eq:alpha_normalized_bound_new}
    \frac{\alpha(x-2s,y-2s)\alpha(x+2s,y+2s)}{(\alpha(x,y))^2}\leq\frac{xy}{(x-2s)(y-2s)}.
  \end{equation}
\end{lem}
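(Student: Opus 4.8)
The plan is to recognize that the function $\alpha$ defined in \eqref{eq:alpha_definition_new} is exactly the Euler Beta function, so that $\alpha(x,y)=\frac{\Gamma(x)\Gamma(y)}{\Gamma(x+y)}$, and then to reduce \eqref{eq:alpha_normalized_bound_new} to the convexity of $\log\Gamma$ on $(0,\infty)$. Before committing to this route, it is worth noting why the obvious attempts fail. Writing $d\mu=\frac{t^{x-1}(1-t)^{y-1}}{\alpha(x,y)}\,dt$ for the Beta$(x,y)$ probability measure and $W=t(1-t)$, the left-hand side of \eqref{eq:alpha_normalized_bound_new} equals $E[W^{-2s}]\,E[W^{2s}]$. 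Since $W^{2s}$ and $W^{-2s}$ are oppositely monotone in $W$, both Cauchy--Schwarz and Chebyshev's correlation inequality yield only $E[W^{-2s}]E[W^{2s}]\ge 1$, i.e.\ a lower bound; neither gives the upper bound we want. Consequently the proof must exploit the precise algebraic structure of the Beta function rather than generic integral inequalities, and the natural vehicle for that is the log-convexity of $\Gamma$.

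The reduction I would carry out is as follows. Substituting $\alpha(a,b)=\frac{\Gamma(a)\Gamma(b)}{\Gamma(a+b)}$ into \eqref{eq:alpha_normalized_bound_new} and clearing denominators (all factors are positive because $x,y>2s$ and $2s<1$), the claimed inequality becomes
\[
  (x-2s)(y-2s)\,\Gamma(x-2s)\Gamma(x+2s)\Gamma(y-2s)\Gamma(y+2s)\,\Gamma(x+y)^2\le xy\,\Gamma(x)^2\Gamma(y)^2\,\Gamma(x+y-4s)\Gamma(x+y+4s).
\]
The key bookkeeping step is to absorb the linear prefactors using $\Gamma(a+1)=a\Gamma(a)$: on the left $(x-2s)\Gamma(x-2s)=\Gamma(x+1-2s)$ and similarly for $y$, while on the right $x\Gamma(x)^2=\Gamma(x+1)\Gamma(x)$ and likewise for $y$. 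Writing $g=\log\Gamma$ and taking logarithms, the inequality is then equivalent to
\[
  \bigl[g(x+2s)+g(x+1-2s)-g(x)-g(x+1)\bigr]+\bigl[g(y+2s)+g(y+1-2s)-g(y)-g(y+1)\bigr]\le g(x+y+4s)+g(x+y-4s)-2g(x+y).
\]

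The proof then closes with two applications of convexity of $g$. For the left-hand bracket, I would observe that $x+2s=(1-2s)x+2s(x+1)$ and $x+1-2s=2s\,x+(1-2s)(x+1)$ are genuine convex combinations of $x$ and $x+1$ (since $0\le 2s<1$), so convexity of $g$ gives $g(x+2s)+g(x+1-2s)\le g(x)+g(x+1)$, making each bracket $\le 0$; the same holds for $y$. For the right-hand side, midpoint convexity of $g$ at $x+y$ with displacement $4s$ gives $g(x+y+4s)+g(x+y-4s)-2g(x+y)\ge 0$, where $x+y-4s>0$ because $x,y>2s$. Hence the left side is $\le 0\le$ the right side, which is the desired conclusion (with equality at $s=0$). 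The main obstacle is really just the two structural insights — that the linear factors must be folded into unit shifts via $\Gamma(a+1)=a\Gamma(a)$, and that convexity of $\log\Gamma$ must be oriented oppositely in the two groups of terms — rather than any delicate estimation. If one prefers to avoid invoking $\Gamma$ explicitly, the same argument goes through using only the joint log-convexity of $\alpha$, which follows directly from H\"older's inequality applied to the integrand in \eqref{eq:alpha_definition_new}, together with the recurrences \eqref{eq:alpha_induction_y} and \eqref{eq:alpha_induction_x} to manage the linear factors.
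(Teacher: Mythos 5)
Your proof is correct, but it takes a genuinely different route from the paper's. The paper never passes to the Gamma function: it applies H\"older's inequality directly to the integral \eqref{eq:alpha_definition_new} in the diagonal direction, obtaining $\alpha(x+2s,y+2s)\le(\alpha(x,y))^{1-2s}(\alpha(x+1,y+1))^{2s}$ and $\alpha(x+1-2s,y+1-2s)\le(\alpha(x,y))^{2s}(\alpha(x+1,y+1))^{1-2s}$, then uses the recurrence $\alpha(x+1,y+1)=\frac{xy}{(x+y+1)(x+y)}\alpha(x,y)$ (a consequence of Lemma \ref{lem:alpha_identities}) to eliminate $\alpha(x+1,y+1)$ from both bounds --- the factor $\frac{xy}{(x-2s)(y-2s)}$ arises when the recurrence is applied at the shifted arguments $(x-2s,y-2s)$ --- and finally multiplies the two resulting estimates. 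In your language, the paper uses exactly the joint log-convexity of $\alpha$ along the direction $(1,1)$ together with the recurrences, which is precisely the alternative you sketch in your closing sentence; so your fallback reconstructs the paper's argument, while your main proof is an independent one. That main proof --- via $\alpha(x,y)=\Gamma(x)\Gamma(y)/\Gamma(x+y)$, the functional equation $\Gamma(a+1)=a\Gamma(a)$, and three applications of convexity of $\log\Gamma$ --- is correct as written: the two convex-combination identities $x+2s=(1-2s)x+2s(x+1)$ and $x+1-2s=2sx+(1-2s)(x+1)$ are valid, each bracket is indeed $\le 0$, the midpoint-convexity term is $\ge 0$, and the positivity conditions ($x,y>2s$, hence $x+y-4s>0$) are all in force. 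What your route buys is transparency: it decouples $x$, $y$, and $x+y$, shows each piece has a definite sign, and makes clear why the inequality holds (log-convexity oriented oppositely on the two sides). What it costs is reliance on classical facts external to the paper (the Beta--Gamma identity and log-convexity of $\Gamma$), and, more importantly, portability: the paper reuses the identical H\"older-plus-recurrence technique verbatim for the function $\beta$ in Lemma \ref{lem:beta_normalized_bound_new}, where $\beta(x,y)=\int_{-\pi/2}^{\pi/2}(\cos t)^{x-1}e^{yt}\,dt$ from \eqref{eq:beta_definition_new} admits no equally convenient real Gamma factorization, so the Gamma route would not transfer there. Your preliminary observation that Cauchy--Schwarz and Chebyshev's correlation inequality only yield the lower bound $E[W^{2s}]E[W^{-2s}]\ge 1$ is also correct and is a useful signpost for why a structural argument is needed.
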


\begin{proof}
  For $s=0$, \eqref{eq:alpha_normalized_bound_new} is both trivial and sharp.  Henceforth, we assume that $0<s<\frac{1}{2}$.

  For $x,y>0$, set $p=\frac{1}{1-2s}$ and $q=\frac{1}{2s}$, so that $p>1$, $q>1$, $\frac{1}{p}+\frac{1}{q}=1$, and $z+2s=\frac{z}{p}+\frac{z+1}{q}$ for any $z>0$.  We may use these exponents in H\"older's inequality to obtain
  \begin{equation}
  \label{eq:Holder_alpha_positive}
    \alpha(x+2s,y+2s)\leq(\alpha(x,y))^{1-2s}(\alpha(x+1,y+1))^{2s}.
  \end{equation}
  Since \eqref{eq:alpha_induction_y} and \eqref{eq:alpha_induction_x} imply that
  \begin{equation}
  \label{eq:alpha_induction_double}
    \alpha(x+1,y+1)=\frac{xy}{(x+y+1)(x+y)}\alpha(x,y),
  \end{equation}
  we may substitute \eqref{eq:alpha_induction_double} in \eqref{eq:Holder_alpha_positive} to obtain
  \begin{equation}
  \label{eq:alpha_positive_upper_bound}
    \alpha(x+2s,y+2s)\leq\alpha(x,y)\left(\frac{xy}{(x+y+1)(x+y)}\right)^{2s}.
  \end{equation}

  For $x,y>2s$, set $p=\frac{1}{2s}$ and $q=\frac{1}{1-2s}$, so that $p>1$, $q>1$, $\frac{1}{p}+\frac{1}{q}=1$, and $z+1-2s=\frac{z}{p}+\frac{z+1}{q}$ for any $z>0$.  We may use these exponents in H\"older's inequality to obtain
  \begin{equation}
  \label{eq:Holder_alpha_negative}
    \alpha(x+1-2s,y+1-2s)\leq(\alpha(x,y))^{2s}(\alpha(x+1,y+1))^{1-2s}.
  \end{equation}
  Since $x,y>2s$, we may apply \eqref{eq:alpha_induction_double} to both the left-hand and right-hand sides of \eqref{eq:Holder_alpha_negative} and obtain
  \begin{multline*}
    \alpha(x-2s,y-2s)\leq\\
    \alpha(x,y)\frac{(x+y-4s+1)(x+y-4s)}{(x-2s)(y-2s)}\left(\frac{xy}{(x+y+1)(x+y)}\right)^{1-2s}.
  \end{multline*}
  Since $\frac{(x+y-4s+1)(x+y-4s)}{(x+y+1)(x+y)}\leq 1$ when $x,y>2s$, this implies
  \begin{equation}
  \label{eq:alpha_negative_upper_bound}
    \alpha(x-2s,y-2s)\leq\alpha(x,y)\frac{xy}{(x-2s)(y-2s)}\left(\frac{xy}{(x+y+1)(x+y)}\right)^{-2s}.
  \end{equation}
  Multiplying \eqref{eq:alpha_positive_upper_bound} and \eqref{eq:alpha_negative_upper_bound}, we obtain \eqref{eq:alpha_normalized_bound_new}.

\end{proof}

The basic identity satisfied by $\beta$ is the following:
\begin{lem}
\label{lem:beta_identities}
  For $x>0$ and $y\in\mathbb{R}$, we have
  \begin{equation}
  \label{eq:beta_induction_new}
    \beta(x+2,y)=\frac{x(x+1)}{(x+1)^2+y^2}\beta(x,y).
  \end{equation}
\end{lem}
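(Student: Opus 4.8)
The plan is to mimic the integration-by-parts argument used for Lemma~\ref{lem:alpha_identities}, taking advantage of the fact that the boundary terms at $t=\pm\frac{\pi}{2}$ vanish: there $\cos t\to 0$, and every power of $\cos t$ that will appear is strictly positive precisely because $x>0$. The integrand $(\cos t)^{x-1}$ defining $\beta(x,y)$ is integrable for the same reason, so all the integrals below converge and the differentiation under the relevant antiderivatives is legitimate.

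First I would differentiate $(\cos t)^{x+1}e^{yt}$, obtaining
\[
  \frac{d}{dt}\bigl[(\cos t)^{x+1}e^{yt}\bigr]=-(x+1)\sin t\,(\cos t)^{x}e^{yt}+y(\cos t)^{x+1}e^{yt}.
\]
Integrating over $\bigl[-\frac{\pi}{2},\frac{\pi}{2}\bigr]$, the left-hand side vanishes, and this produces the auxiliary identity
\[
  \int_{-\frac{\pi}{2}}^{\frac{\pi}{2}}\sin t\,(\cos t)^{x}e^{yt}\,dt=\frac{y}{x+1}\beta(x+2,y).
\]

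Next I would differentiate $\sin t\,(\cos t)^{x}e^{yt}$ and again integrate over $\bigl[-\frac{\pi}{2},\frac{\pi}{2}\bigr]$, where the boundary contribution again vanishes. Using $\sin^2 t=1-\cos^2 t$ to rewrite $\sin^2 t\,(\cos t)^{x-1}=(\cos t)^{x-1}-(\cos t)^{x+1}$, this yields the linear relation
\[
  0=(x+1)\beta(x+2,y)-x\beta(x,y)+y\int_{-\frac{\pi}{2}}^{\frac{\pi}{2}}\sin t\,(\cos t)^{x}e^{yt}\,dt.
\]

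Finally, substituting the first identity into this relation eliminates the auxiliary integral and gives $\frac{(x+1)^2+y^2}{x+1}\beta(x+2,y)=x\beta(x,y)$, which is exactly \eqref{eq:beta_induction_new} after dividing. The computation is otherwise routine; the one point requiring care is that a single integration by parts is not self-contained, since it introduces the extra integral $\int\sin t\,(\cos t)^{x}e^{yt}\,dt$. Thus the main (minor) obstacle is recognizing that a second integration by parts is needed to close the system, after which the substitution $\sin^2 t=1-\cos^2 t$ couples $\beta(x+2,y)$ back to $\beta(x,y)$ and the two relations combine to eliminate the auxiliary term.
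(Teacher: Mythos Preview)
Your proof is correct and follows essentially the same approach as the paper: two integrations by parts combined with the substitution $\sin^2 t=1-\cos^2 t$, using that all boundary terms vanish because $x>0$. The only cosmetic difference is that the paper packages both steps into a single derivative by differentiating the linear combination $y(\cos t)^{x+1}e^{yt}+(x+1)(\cos t)^x(\sin t)e^{yt}$ directly, which amounts to $y$ times your first antiderivative plus $(x+1)$ times your second, thereby avoiding the explicit appearance of the auxiliary integral.
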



\begin{proof}
  Since
  \begin{multline*}
    \frac{\partial}{\partial t}\left(y(\cos t)^{x+1}e^{yt}+(x+1)(\cos t)^x(\sin t)e^{yt}\right)=\\
    y^2(\cos t)^{x+1} e^{yt}-x(x+1)(\cos t)^{x-1}(\sin^2 t)e^{yt}+(x+1)(\cos t)^{x+1} e^{yt},
  \end{multline*}
  we may substitute $\sin^2 t=1-\cos^2 t$ to obtain
  \begin{multline}
  \label{eq:beta_integration_by_parts}
    \frac{\partial}{\partial t}\left(y(\cos t)^{x+1}e^{yt}+(x+1)(\cos t)^x(\sin t)e^{yt}\right)=\\
    ((x+1)^2+y^2)(\cos t)^{x+1} e^{yt}-x(x+1)(\cos t)^{x-1}e^{yt}.
  \end{multline}
  Integrating \eqref{eq:beta_integration_by_parts} from $-\frac{\pi}{2}$ to $\frac{\pi}{2}$, we obtain
  \[
    0=((x+1)^2+y^2)\beta(x+2,y)-x(x+1)\beta(x,y),
  \]
  from which \eqref{eq:beta_induction_new} follows.

\end{proof}

The analog of Lemma \ref{lem:alpha_normalized_bound_new} for $\beta$ is given by the following:
\begin{lem}
\label{lem:beta_normalized_bound_new}
  For $0\leq s<\frac{1}{2}$, $x>4s$, and $y>0$, we have
  \begin{equation}
  \label{eq:beta_normalized_bound_new}
    \frac{\beta(x-4s,y)\beta(x+4s,y)}{(\beta(x,y))^2}\leq\frac{(1+4s)x}{x-4s}.
  \end{equation}
\end{lem}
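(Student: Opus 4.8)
The plan is to mirror the proof of Lemma \ref{lem:alpha_normalized_bound_new} verbatim, replacing the single-step recurrence \eqref{eq:alpha_induction_double} with the two-step recurrence \eqref{eq:beta_induction_new}; this replacement is precisely why the shift here is $4s$ rather than $2s$. As in the $\alpha$ case, the case $s=0$ makes \eqref{eq:beta_normalized_bound_new} trivial and sharp, so I would assume $0<s<\frac{1}{2}$ throughout.

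First I would bound $\beta(x+4s,y)$ from above by interpolating between $\beta(x,y)$ and $\beta(x+2,y)$. The key algebraic observation is that $(x-1)(1-2s)+(x+1)(2s)=x+4s-1$, so that $(\cos t)^{x+4s-1}e^{yt}=\left[(\cos t)^{x-1}e^{yt}\right]^{1-2s}\left[(\cos t)^{x+1}e^{yt}\right]^{2s}$, the two copies of $e^{yt}$ recombining since $(1-2s)+2s=1$. Applying H\"older's inequality with exponents $p=\frac{1}{1-2s}$ and $q=\frac{1}{2s}$ then gives $\beta(x+4s,y)\leq\beta(x,y)^{1-2s}\beta(x+2,y)^{2s}$, and substituting \eqref{eq:beta_induction_new} yields $\beta(x+4s,y)\leq\beta(x,y)\left(\frac{x(x+1)}{(x+1)^2+y^2}\right)^{2s}$. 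For $\beta(x-4s,y)$ I would run the dual interpolation, exactly as in the passage from \eqref{eq:Holder_alpha_negative} to \eqref{eq:alpha_negative_upper_bound}: interpolating $\beta(x+2-4s,y)$ between $\beta(x,y)$ and $\beta(x+2,y)$ with exponents $p=\frac{1}{2s}$, $q=\frac{1}{1-2s}$ gives $\beta(x+2-4s,y)\leq\beta(x,y)^{2s}\beta(x+2,y)^{1-2s}$, and applying \eqref{eq:beta_induction_new} to the base-$(x-4s)$ term on the left and to $\beta(x+2,y)$ on the right (both legitimate since $x>4s$) produces $\beta(x-4s,y)\leq\beta(x,y)\frac{(x-4s+1)^2+y^2}{(x-4s)(x-4s+1)}\left(\frac{x(x+1)}{(x+1)^2+y^2}\right)^{1-2s}$.

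Multiplying the two bounds, the fractional powers of $\frac{x(x+1)}{(x+1)^2+y^2}$ combine to the first power, leaving $$\frac{\beta(x-4s,y)\beta(x+4s,y)}{(\beta(x,y))^2}\leq\frac{x}{x-4s}\cdot\frac{x+1}{x-4s+1}\cdot\frac{(x-4s+1)^2+y^2}{(x+1)^2+y^2}.$$ It then remains to show that the product of the last two factors is at most $1+4s$, and this is the one place the argument genuinely departs from the $\alpha$ case, so I expect it to be the main (though still elementary) obstacle. Setting $a=x+1$ and $b=x-4s+1$, so that $a>b>1$, the quantity $G(u)=\frac{a(b^2+u)}{b(a^2+u)}$ with $u=y^2\geq0$ is increasing in $u$, because $\frac{d}{du}\frac{b^2+u}{a^2+u}=\frac{a^2-b^2}{(a^2+u)^2}>0$. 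Hence $G(u)<\lim_{u\to\infty}G(u)=\frac{a}{b}=1+\frac{4s}{x+1-4s}$, and since $x>4s$ forces $x+1-4s>1$, this limit is strictly less than $1+4s$. Therefore $G(y^2)\leq1+4s$, which gives \eqref{eq:beta_normalized_bound_new}. I would also remark that this monotonicity argument uses only $u=y^2\geq0$, so the estimate in fact holds for all real $y$, the hypothesis $y>0$ being stated only for convenience.
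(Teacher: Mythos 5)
Your proof is correct and follows essentially the same route as the paper's: the same two H\"older interpolations between $\beta(x,y)$ and $\beta(x+2,y)$ with exponents $\frac{1}{1-2s}$ and $\frac{1}{2s}$, followed by substitution of the recurrence \eqref{eq:beta_induction_new} and multiplication of the resulting upper bounds. The only difference is cosmetic and occurs in the last elementary step: where you bound $\frac{x+1}{x-4s+1}\cdot\frac{(x-4s+1)^2+y^2}{(x+1)^2+y^2}$ via monotonicity in $y^2$, the paper simply uses the two separate inequalities $\frac{(x-4s+1)^2+y^2}{(x+1)^2+y^2}\leq 1$ and $\frac{x+1}{x-4s+1}\leq 1+4s$, which give the same conclusion.
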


\begin{proof}
  For $s=0$, \eqref{eq:beta_normalized_bound_new} is both trivial and sharp.  Henceforth, we assume that $0<s<\frac{1}{2}$.

  For $x,y>0$, set $p=\frac{1}{1-2s}$ and $q=\frac{1}{2s}$, so that $p>1$, $q>1$, $\frac{1}{p}+\frac{1}{q}=1$, and $x+4s=\frac{x}{p}+\frac{x+2}{q}$.  We may use these exponents in H\"older's inequality to obtain
  \begin{equation}
  \label{eq:Holder_beta_positive}
    \beta(x+4s,y)\leq(\beta(x,y))^{1-2s}(\beta(x+2,y))^{2s}.
  \end{equation}
  We may substitute \eqref{eq:beta_induction_new} in \eqref{eq:Holder_beta_positive} to obtain
  \begin{equation}
  \label{eq:beta_positive_upper_bound}
    \beta(x+4s,y)\leq\beta(x,y)\left(\frac{x(x+1)}{(x+1)^2+y^2}\right)^{2s}.
  \end{equation}

  For $x,y>0$, set $p=\frac{1}{2s}$ and $q=\frac{1}{1-2s}$, so that $p>1$, $q>1$, $\frac{1}{p}+\frac{1}{q}=1$, and $x+2-4s=\frac{x}{p}+\frac{x+2}{q}$.  We may use these exponents in H\"older's inequality to obtain
  \begin{equation}
  \label{eq:Holder_beta_negative}
    \beta(x+2-4s,y)\leq(\beta(x,y))^{2s}(\beta(x+2,y))^{1-2s}.
  \end{equation}
  If $x>4s$, we may apply \eqref{eq:beta_induction_new} to both the left-hand and right-hand sides of \eqref{eq:Holder_beta_negative} and obtain
  \[
    \beta(x-4s,y)\leq\beta(x,y)\frac{(x-4s+1)^2+y^2}{(x-4s)(x-4s+1)}\left(\frac{x(x+1)}{(x+1)^2+y^2}\right)^{1-2s}.
  \]
  Since $\frac{(x-4s+1)^2+y^2}{(x+1)^2+y^2}\leq 1$ and $\frac{x+1}{x-4s+1}\leq 1+4s$ when $x>4s>0$, this implies
  \begin{equation}
  \label{eq:beta_negative_upper_bound}
    \beta(x-4s,y)\leq\beta(x,y)\frac{(1+4s)x}{x-4s}\left(\frac{x(x+1)}{(x+1)^2+y^2}\right)^{-2s}.
  \end{equation}
  Multiplying \eqref{eq:beta_positive_upper_bound} and \eqref{eq:beta_negative_upper_bound}, we obtain \eqref{eq:beta_normalized_bound_new}.

\end{proof}

\section{Integration Formulas}

With $D_\mu$ defined by \eqref{eq:D_defn} and $\varphi_\mu$ defined by Lemma \ref{lem:biholomorphism}, we may now endow $D_\mu$ with the unique K\"ahler metric under which $\varphi_\mu$ is an isometry between $\Omega_\mu$ and $D_\mu$.  Observe that if we set $L_1^z=e^{i\log|z_2|^2}\frac{\partial}{\partial z_1}$ and $L_2^z=z_2\frac{\partial}{\partial z_2}$, then $\{L_1^z,L_2^z\}$ is an orthonormal basis for $T^{1,0}(\tilde M)$.  Moreover, $L_1^z$ and $L_2^z$ are both invariant under the push-forward induced by the map $(z_1,z_2)\mapsto(e^{2\pi\mu i}z_1,e^{\pi\mu} z_2)$, so they induce an orthonormal basis for $T^{1,0}(M_\mu)$.  If we define $\theta^1_z=e^{-i\log|z_2|^2}dz_1$ and $\theta^2_z=z_2^{-1}dz_2$, then $\{\theta^1_z,\theta^2_z\}$ is the basis for $\Lambda^{1,0}(\tilde M)$ that is dual to $\{L_1^z,L_2^z\}$, and hence it is also orthonormal.  Observe that both of the induced bases are smooth on $M_\mu$.

Recall that on $\tilde\Omega$, $\log^{|z_2|}(z_1)$ is a holomorphic branch of $\log z_1$ that is locally constant with respect to $|z_2|$, and hence
\[
  L_1^z\log^{|z_2|}(z_1)=z_1^{-1}e^{i\log|z_2|^2}\text{ and }L_2^z\log^{|z_2|}(z_1)=0.
\]
If we set $w=\tilde\varphi(z)$, then \eqref{eq:abs_z_1} and \eqref{eq:e_i_log_abs_z_2_squared} give us
\[
  L_1^z\log^{|z_2|}(z_1)=-\frac{1}{2|w_1|^{\mu}}e^{i\log|w_2|^2},
\]
and hence we may use \eqref{eq:varphi_defn} to compute the push-forwards
\begin{align*}
  \tilde\varphi_* L_1^z&=L_1^z\tilde\varphi_1(z)\frac{\partial}{\partial w_1}+L_1^z\tilde\varphi_2(z)\frac{\partial}{\partial w_2}\\
  &=-\frac{w_1}{2\mu|w_1|^\mu}e^{i\log|w_2|^2}\frac{\partial}{\partial w_1}-\frac{i w_2}{4|w_1|^\mu}e^{i\log|w_2|^2}\frac{\partial}{\partial w_2}
\end{align*}
and
\[
  \tilde\varphi_* L_2^z=L_2^z\tilde\varphi_1(z)\frac{\partial}{\partial w_1}+L_2^z\tilde\varphi_2(z)\frac{\partial}{\partial w_2}
  =w_2\frac{\partial}{\partial w_2}.
\]
If we define
\begin{align}
\label{eq:L_1_w_defn}  L_1^w&=-\frac{w_1}{2\mu|w_1|^\mu}e^{i\log|w_2|^2}\frac{\partial}{\partial w_1}-\frac{i w_2}{4|w_1|^\mu}e^{i\log|w_2|^2}\frac{\partial}{\partial w_2}\text{ and}\\
\label{eq:L_2_w_defn}  L_2^w&=w_2\frac{\partial}{\partial w_2},
\end{align}
then $\{L_1^w,L_2^w\}$ is an orthonormal basis for $T^{1,0}(D_\mu)$ in the metric obtained by pushing forward the K\"ahler metric on $T^{1,0}(\Omega_\mu)$.  The dual basis, which is necessarily orthonormal as well, is given by $\{\theta^1_w,\theta^2_w\}$, where
\begin{align}
\label{eq:theta_w_1_defn}  \theta^1_w&=-\frac{2\mu|w_1|^\mu}{w_1}e^{-i\log|w_2|^2}dw_1\text{ and}\\
\label{eq:theta_w_2_defn}  \theta^2_w&=-\frac{i\mu}{2w_1}dw_1+\frac{1}{w_2}dw_2.
\end{align}
We note, for future reference, the following consequence of \eqref{eq:theta_w_1_defn}:
\begin{equation}
\label{eq:theta_w_1_inverse}
  dw_1=-\frac{w_1}{2\mu|w_1|^\mu}e^{i\log|w_2|^2}\theta^1_w.
\end{equation}
With an orthonormal basis for $(1,0)$-forms, we may construct a K\"ahler form
\begin{multline*}
  \omega_{D_\mu}=i\mu^2\left(\frac{16|w_1|^{2\mu}+1}{8|w_1|^2}\right)dw_1\wedge d\bar w_1\\
  +\frac{\mu}{4w_1\bar w_2}dw_1\wedge d\bar w_2-\frac{\mu}{4\bar w_1 w_2}dw_2\wedge d\bar w_1+\frac{i}{2|w_2|^2}dw_2\wedge d\bar w_2
\end{multline*}
and a volume element
\[
  dV_{D_\mu}=\frac{\mu^2|w_1|^{2\mu-2}}{|w_2|^2}dw_1\wedge dw_2\wedge d\bar w_1\wedge d\bar w_2.
\]
If we let $dV_{z}=\frac{i}{2}dz\wedge d\bar z$ denote the Euclidean volume element on $\mathbb{C}$ with respect to the variable $z$, we may decompose this in the form
\begin{equation}
\label{eq:D_volume_element}
  dV_{D_\mu}=\frac{4\mu^2|w_1|^{2\mu-2}}{|w_2|^2}dV_{w_1}\wedge dV_{w_2}.
\end{equation}
From \eqref{eq:D_volume_element}, we see that computing the Bergman projection with respect to the push-forward metric is equivalent to computing the weighted Bergman projection on $\mathbb{C}^2$ with respect to the singular weight $w\mapsto\frac{4\mu^2|w_1|^{2\mu-2}}{|w_2|^2}$.  For $q\geq 1$, we define $L^q(D_\mu;dV_{D_\mu})$ to be the space of $L^q$ functions on $D_\mu$ computed with respect to the volume element \eqref{eq:D_volume_element}.

From \eqref{eq:D_defn}, we see that $w\in D_\mu$ if and only if $0<|w_1|<1$ and $a(|w_1|)<|w_2|<b(|w_1|)$, where $a$ and $b$ are defined by \begin{align}
  \label{eq:a_defn}a(r)&=\exp\left(-\frac{1}{2}\arccos\left(r^{\mu}\right)\right)\text{ for }0\leq r\leq 1\text{ and}\\
  \label{eq:b_defn}b(r)&=\exp\left(\frac{1}{2}\arccos\left(r^{\mu}\right)\right)\text{ for }0\leq r\leq 1.
\end{align}
Hence, if $f\in L^1(D_\mu;dV_{D_\mu})$, then an integral on $D_\mu$ with respect to the volume element defined by \eqref{eq:D_volume_element} can be computed via the iterated integral:
\begin{multline}
\label{eq:D_integration_formula}
  \int_{D_\mu} f(w) dV_{D_\mu}=\\
  \int_{0<|w_1|<1}\left(\int_{a(|w_1|)<|w_2|<b(|w_1|)}\frac{4\mu^2|w_1|^{2\mu-2}}{|w_2|^2}f(w)dV_{w_2}\right)dV_{w_1}.
\end{multline}
To fully exploit the structure of $D_\mu$ as a Reinhardt domain, we consider the case in which $f(w)=g(|w_1|,|w_2|)$.  Converting to polar coordinates in \eqref{eq:D_integration_formula}, we have
\begin{equation}
\label{eq:D_integration_formula_circular_symmetry}
  \int_{D_\mu} g(|w_1|,|w_2|) dV_{D_\mu}=
  16\pi^2\mu^2\int_0^1\int_{a(r_1)}^{b(r_1)}r_1^{2\mu-1}r_2^{-1}g(r_1,r_2)dr_2 dr_1,
\end{equation}
provided that
\begin{equation}
\label{eq:D_integration_formula_circular_symmetry_finiteness_condition}
  \int_0^1\int_{a(r_1)}^{b(r_1)}r_1^{2\mu-1}r_2^{-1}\abs{g(r_1,r_2)}dr_2 dr_1<\infty.
\end{equation}
A standard argument from one complex variables demonstrates that we can extend \eqref{eq:D_integration_formula_circular_symmetry} to
\begin{equation}
\label{eq:D_integration_formula_circular_symmetry_extended}
  \int_{D_\mu} g(|w_1|,|w_2|)w_1^j w_2^k dV_{D_\mu}=0\text{ if }j\neq 0\text{ or }k\neq 0.
\end{equation}
for all $j,k\in\mathbb{Z}$ provided that
\begin{equation}
\label{eq:D_integration_formula_circular_symmetry_extended_finiteness_condition}
  \int_0^1\int_{a(r_1)}^{b(r_1)}r_1^{j+2\mu-1}r_2^{k-1}\abs{g(r_1,r_2)}dr_2 dr_1<\infty.
\end{equation}

We will estimate Sobolev spaces on $\Omega_\mu$ by using $L^2$ spaces weighted by a power of the distance to the boundary.  In order to push these estimates forward to $D_\mu$, we will need the following:
\begin{lem}
\label{lem:delta_estimates}
  If $\delta:D_\mu\rightarrow\mathbb{R}$ satisfies
  \begin{equation}
  \label{eq:delta_defn}
    \delta(\varphi_\mu([z]))=\dist([z],\partial\Omega_\mu)\text{ for all }[z]\in\Omega_\mu,
  \end{equation}
  where $\varphi_\mu$ is defined by \eqref{eq:varphi_defn}, then
  \begin{equation}
  \label{eq:delta_radial_symmetry}
    \delta(w)=\delta(|w_1|,|w_2|)\text{ for all }w\in D_\mu
  \end{equation}
  and there exists a constant $C>1$ independent of $w$ such that
  \begin{equation}
  \label{eq:delta_lower_bound}
    \delta(w)\geq C^{-1}\abs{w_1}^{\mu}\left(\cos\left(\log\abs{w_2}^2\right)-\abs{w_1}^{\mu}\right)
  \end{equation}
  and
  \begin{equation}
  \label{eq:delta_upper_bound}
    \delta(w)\leq C\abs{w_1}^{\mu}\left(\cos\left(\log\abs{w_2}^2\right)-\abs{w_1}^{\mu}\right)
  \end{equation}
  for all $w\in D_\mu$.
\end{lem}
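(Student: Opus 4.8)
The plan is to handle the three assertions in order: deduce the radial symmetry \eqref{eq:delta_radial_symmetry} from the isometries of Lemma \ref{lem:isometry}, rewrite the target quantity in terms of the defining function via \eqref{eq:defining_function_varphi}, and then reduce \eqref{eq:delta_lower_bound}--\eqref{eq:delta_upper_bound} to the standard comparison between the boundary distance and a smooth defining function on a relatively compact domain. Throughout, $\dist([z],\partial\Omega_\mu)$ denotes the Riemannian distance in the flat K\"ahler metric on $M_\mu$.

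First I would establish \eqref{eq:delta_radial_symmetry}. Since $\psi_{\theta_1,\theta_2,\mu}$ is a biholomorphic isometry of $\Omega_\mu$ onto itself (Lemma \ref{lem:isometry}), it extends to an isometry of $M_\mu$ that carries $\partial\Omega_\mu$ to itself, and hence preserves the distance to the boundary: $\dist(\psi_{\theta_1,\theta_2,\mu}([z]),\partial\Omega_\mu)=\dist([z],\partial\Omega_\mu)$ for every $[z]\in\Omega_\mu$. Writing $w=\varphi_\mu([z])$ and combining the definition \eqref{eq:delta_defn} with the push-forward formula \eqref{eq:isometry_push_forward}, this identity becomes $\delta(e^{i\theta_1}w_1,e^{i\theta_2}w_2)=\delta(w)$ for all $\theta_1,\theta_2\in\mathbb{R}$. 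As $\theta_1$ and $\theta_2$ are arbitrary, $\delta$ depends only on $\abs{w_1}$ and $\abs{w_2}$, which is exactly \eqref{eq:delta_radial_symmetry}.

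Next I would identify the right-hand side of the desired bounds with the defining function. By \eqref{eq:defining_function_varphi}, for $[z]\in\Omega_\mu$ with $w=\varphi_\mu([z])$ we have $\rho_\mu([z])=\tilde\rho(z)=-4\abs{w_1}^{\mu}\left(\cos\left(\log\abs{w_2}^2\right)-\abs{w_1}^{\mu}\right)$, and since $\rho_\mu<0$ on $\Omega_\mu$ this reads $\abs{\rho_\mu([z])}=4\abs{w_1}^{\mu}\left(\cos\left(\log\abs{w_2}^2\right)-\abs{w_1}^{\mu}\right)$. Thus, after absorbing the factor of $4$ into $C$, the two inequalities \eqref{eq:delta_lower_bound} and \eqref{eq:delta_upper_bound} together are equivalent to the single assertion that $\delta(w)=\dist(\varphi_\mu^{-1}(w),\partial\Omega_\mu)$ and $\abs{\rho_\mu(\varphi_\mu^{-1}(w))}$ are comparable, uniformly in $w$. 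Because both quantities are pulled back by the same map $\varphi_\mu^{-1}$, it suffices to prove that $\dist([z],\partial\Omega_\mu)$ and $\abs{\rho_\mu([z])}$ are comparable uniformly on $\Omega_\mu$; the distortion of $\varphi_\mu$ then never enters.

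The main step is therefore this uniform comparison. Since $\overline{\Omega_\mu}$ is compact and $\rho_\mu$ is a smooth defining function with $d\rho_\mu\neq 0$ on the compact hypersurface $\partial\Omega_\mu$ (Proposition \ref{prop:Omega_properties}), the boundary has positive reach: there is a tubular neighborhood on which the nearest-point projection and the signed distance function $d$ (negative inside $\Omega_\mu$) are smooth, $\abs{\nabla d}\equiv 1$, and $d=0$ precisely on $\partial\Omega_\mu$. On this neighborhood both $d$ and $\rho_\mu$ define $\partial\Omega_\mu$ with positively proportional outward gradients, so $\rho_\mu=h\,d$ for a smooth function $h$ bounded between two positive constants on the compact boundary, and hence on a slightly smaller neighborhood; this yields $C^{-1}\dist(\cdot,\partial\Omega_\mu)\leq\abs{\rho_\mu}\leq C\dist(\cdot,\partial\Omega_\mu)$ there. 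On the complementary compact set, where $\dist(\cdot,\partial\Omega_\mu)\geq\eps_0>0$, both quantities are bounded above (by $\diam\overline{\Omega_\mu}$ and $\max\abs{\rho_\mu}$) and below (by $\eps_0$ and the positive minimum of $\abs{\rho_\mu}$), so the comparison persists after enlarging $C$. I expect the uniformity of this comparison up to the weakly pseudoconvex locus $S_\mu\subset\partial\Omega_\mu$ to be the only delicate point, and the key observation that dissolves it is that the estimate is a purely real statement about a smooth compact hypersurface: it is insensitive both to the degeneracy of the Levi form along $S_\mu$ and to the singularity of the push-forward metric near $\{w_1=0\}$, neither of which affects the smooth flat geometry of $\Omega_\mu$ in $M_\mu$ on which $\dist$ is computed.
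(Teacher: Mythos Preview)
Your proposal is correct and follows essentially the same route as the paper: the radial symmetry comes from the rotational isometries furnished by Lemma \ref{lem:isometry} via \eqref{eq:isometry_push_forward}, and the two-sided bound is obtained by recognizing the right-hand side as (a constant multiple of) $-\tilde\rho$ through \eqref{eq:defining_function_varphi} and then invoking the standard comparability of a smooth defining function with the boundary distance on a relatively compact smooth domain. The paper simply cites this last comparability as a known fact, whereas you spell out the tubular-neighborhood argument; your remark that the estimate is a purely real-geometric statement, independent of the Levi degeneracy along $S_\mu$ and of the singular push-forward metric on $D_\mu$, is exactly the point.
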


\begin{proof}
  Observe that $D_\mu$ is a Reinhardt domain and \eqref{eq:isometry_push_forward} implies that if we push-forward the K\"ahler metric on $\Omega_\mu$ to $D_\mu$ by the map $\varphi_\mu$, rotations in each variable are isometries on $D_\mu$.  Hence, the distance to the boundary of $D_\mu$ must satisfy the symmetry \eqref{eq:delta_radial_symmetry}.

  For $w\in D_\mu$, define
  \[
    \delta_0(w)=4\abs{w_1}^{\mu}\left(\cos\left(\log\abs{w_2}^2\right)-\abs{w_1}^{\mu}\right)
  \]
  Let $\tilde\rho$ denote the defining function for $\tilde\Omega$ given by \eqref{eq:tilde_rho_defn}.  By \eqref{eq:defining_function_varphi}, $\delta_0\left(\tilde\varphi(z)\right)=-\tilde\rho(z)$ for all $z\in\tilde\Omega$.  Since the absolute value of any defining function is comparable to the distance to the boundary, $\delta_0(\varphi_\mu([z]))$ is comparable to $\dist([z],\partial\Omega_\mu)$.  Hence, $\delta_0(w)$ is comparable to $\delta(w)$ on $D_\mu$, from which \eqref{eq:delta_lower_bound} and \eqref{eq:delta_upper_bound} follow.

\end{proof}

As usual on Reinhardt domains (and as a result of \eqref{eq:D_integration_formula_circular_symmetry_extended}), we will see that the Bergman space on $D_\mu$ is spanned by holomorphic monomials.  We will see that the Sobolev norms of these monomials can be estimated by considering the $L^2$ norms weighted by a power of the distance to the boundary.  The function $\lambda$ defined below by \eqref{eq:lambda_defn} below will give us the key tool to estimate these norms.
\begin{lem}
  For $x,y,s\in\mathbb{R}$, and $\delta$ defined by \eqref{eq:delta_defn}, if we define
  \begin{equation}
  \label{eq:lambda_defn}
    \lambda(x,y,s)=\int_{D_\mu}|w_1|^{2x}|w_2|^{2y}(\delta(w))^{-2s}dV_{D_\mu},
  \end{equation}
  then we have
  \begin{equation}
  \label{eq:lambda_integrability}
    \lambda(x,y,s)<\infty\text{ if and only if }\frac{x}{\mu}+1-s>0\text{ and }s<\frac{1}{2}.
  \end{equation}
  If $0\leq s<\frac{1}{2}$, we have
  \begin{equation}
  \label{eq:lambda_normalized_bounds}
    \frac{\lambda(x,y,s)\lambda(x,y,-s)}{(\lambda(x,y,0))^2}\leq C^{8|s|}\frac{\left(\frac{2x}{\mu}+2\right)(1+4s)\left(\frac{2x}{\mu}+3\right)}{\left(\frac{2x}{\mu}+2-2s\right)(1-2s)\left(\frac{2x}{\mu}+3-4s\right)}
  \end{equation}
  for the constant $C>1$ given by Lemma \ref{lem:delta_estimates}.
\end{lem}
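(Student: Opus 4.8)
The plan is to evaluate $\lambda$ in closed form up to constants, exhibit it as a product of an $\alpha$-factor and a $\beta$-factor, and then read off both conclusions from Lemmas \ref{lem:alpha_normalized_bound_new} and \ref{lem:beta_normalized_bound_new}. First I would replace $\delta$ by the explicit model weight $\delta_0(w)=4|w_1|^\mu(\cos(\log|w_2|^2)-|w_1|^\mu)$ appearing in the proof of Lemma \ref{lem:delta_estimates}. Since \eqref{eq:delta_lower_bound} and \eqref{eq:delta_upper_bound} give $(4C)^{-1}\delta_0\leq\delta\leq(C/4)\delta_0$ pointwise on $D_\mu$, the integrand of \eqref{eq:lambda_defn} is comparable to that of $\lambda_0(x,y,s):=\int_{D_\mu}|w_1|^{2x}|w_2|^{2y}\delta_0(w)^{-2s}\,dV_{D_\mu}$ with constants depending only on $s$ and $C$; in particular $\lambda$ and $\lambda_0$ are simultaneously finite. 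This reduces both assertions to a computation with the explicit weight $\delta_0$.

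Next I would compute $\lambda_0$ exactly. Because the integrand is circularly symmetric, \eqref{eq:D_integration_formula_circular_symmetry} turns $\lambda_0$ into an iterated real integral over $\{0<r_1<1,\ a(r_1)<r_2<b(r_1)\}$, with $a,b$ from \eqref{eq:a_defn}--\eqref{eq:b_defn}. The substitution $\tau=\log r_2^2$ converts the $r_2$-integral into an integral over $|\tau|<\arccos(r_1^\mu)$ of $e^{y\tau}(\cos\tau-r_1^\mu)^{-2s}$, and since the integrand is nonnegative, Tonelli's theorem lets me swap the order of integration; the region becomes $\{|\tau|<\tfrac{\pi}{2},\ 0<r_1<(\cos\tau)^{1/\mu}\}$. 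The inner substitution $v=r_1^\mu/\cos\tau$ then separates the variables: the $r_1$-integral collapses to $\tfrac{1}{\mu}(\cos\tau)^{\frac{2x}{\mu}+2-4s}\alpha(\tfrac{2x}{\mu}+2-2s,\,1-2s)$ by \eqref{eq:alpha_definition_new}, and the remaining $\tau$-integral is $\beta(\tfrac{2x}{\mu}+3-4s,\,y)$ by \eqref{eq:beta_definition_new}. The upshot is the identity $\lambda_0(x,y,s)=8\pi^2\mu\,4^{-2s}\,\alpha(\tfrac{2x}{\mu}+2-2s,1-2s)\,\beta(\tfrac{2x}{\mu}+3-4s,y)$, valid (with value $+\infty$) regardless of convergence.

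From this identity the integrability statement \eqref{eq:lambda_integrability} is immediate: $\alpha$ converges exactly when $\tfrac{2x}{\mu}+2-2s>0$ and $1-2s>0$, i.e. $\tfrac{x}{\mu}+1-s>0$ and $s<\tfrac12$, while $\beta$ converges when $\tfrac{2x}{\mu}+3-4s>0$, which is the sum of the two $\alpha$-conditions and hence automatic; comparability then transfers the conclusion to $\lambda$. For the normalized bound \eqref{eq:lambda_normalized_bounds} I would form $\lambda_0(x,y,s)\lambda_0(x,y,-s)/\lambda_0(x,y,0)^2$; the factors $8\pi^2\mu$ and $4^{\mp2s}$ cancel, leaving exactly $\frac{\alpha(A-2s,1-2s)\alpha(A+2s,1+2s)}{\alpha(A,1)^2}\cdot\frac{\beta(B-4s,y)\beta(B+4s,y)}{\beta(B,y)^2}$ with $A=\tfrac{2x}{\mu}+2$ and $B=\tfrac{2x}{\mu}+3$. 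Applying Lemma \ref{lem:alpha_normalized_bound_new} (valid since $A>2s$ under the hypothesis $\lambda(x,y,s)<\infty$ and $1>2s$) and Lemma \ref{lem:beta_normalized_bound_new} (valid since $B=A+1>2s+1>4s$, using evenness of $\beta$ in $y$ to reduce to $y\geq0$ and continuity for $y=0$) bounds this by $\frac{A}{(A-2s)(1-2s)}\cdot\frac{(1+4s)B}{B-4s}$, which is the claimed rational factor. Finally, reinstating the comparability constants gives $\lambda(x,y,s)\leq(4C)^{2s}\lambda_0(x,y,s)$, $\lambda(x,y,-s)\leq(C/4)^{2s}\lambda_0(x,y,-s)$, and $\lambda(x,y,0)=\lambda_0(x,y,0)$, so the extra factor is $(4C)^{2s}(C/4)^{2s}=C^{4s}\leq C^{8|s|}$, completing the bound.

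I expect the main obstacle to be the exact evaluation of $\lambda_0$: the two changes of variables must be arranged so that the powers of $\cos\tau$ and of $v$ collapse to precisely the arguments $\tfrac{2x}{\mu}+2-2s,\,1-2s$ of $\alpha$ and $\tfrac{2x}{\mu}+3-4s$ of $\beta$, since only this exact matching allows the two earlier H\"older-type lemmas to be applied termwise. The conceptual point underlying the whole argument is that, once the distance estimate is absorbed into $\delta_0$, the weight factors as a function of $r_1^\mu/\cos\tau$ times a function of $\tau$, so the $w_1$- and $w_2$-dependence decouple into the independent $\alpha$- and $\beta$-estimates.
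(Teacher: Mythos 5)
Your proposal is correct and follows essentially the same route as the paper's proof: reduce $\delta$ to the explicit weight via Lemma \ref{lem:delta_estimates}, use \eqref{eq:D_integration_formula_circular_symmetry} and the substitution $(u_1,u_2)=\bigl(r_1^{\mu}/\cos(\log r_2^2),\,\log r_2^2\bigr)$ (your two changes of variables compose to exactly this) to separate the integral into $\alpha\bigl(\tfrac{2x}{\mu}+2-2s,1-2s\bigr)\,\beta\bigl(\tfrac{2x}{\mu}+3-4s,y\bigr)$, and then invoke Lemmas \ref{lem:alpha_normalized_bound_new} and \ref{lem:beta_normalized_bound_new}. The differences are only organizational—the paper carries two-sided comparison inequalities through the computation rather than evaluating the model integral $\lambda_0$ exactly, and your explicit handling of $y\le 0$ via evenness of $\beta$ and of the condition $\tfrac{2x}{\mu}+2>2s$ makes precise details the paper leaves implicit.
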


\begin{proof}
  Since the integrand in \eqref{eq:lambda_defn} is positive on $D_\mu$, \eqref{eq:D_integration_formula_circular_symmetry_finiteness_condition} will be satisfied precisely when \eqref{eq:D_integration_formula_circular_symmetry} is finite, so we will not need to carefully distinguish the iterated integral from the integral with respect to the higher dimensional measure.  By \eqref{eq:D_integration_formula_circular_symmetry} and \eqref{eq:delta_radial_symmetry}, we have
  \[
    \lambda(x,y,s)=16\pi^2\mu^2\int_0^1\int_{a(r_1)}^{b(r_1)}r_1^{2x+2\mu-1}r_2^{2y-1}(\delta(r_1,r_2))^{-2s}dr_2 dr_1.
  \]
  When $\lambda(x,y,s)$ is finite, we may use \eqref{eq:a_defn} and \eqref{eq:b_defn} to change the order of integration and obtain
  \[
    \lambda(x,y,s)=
    16\pi^2\mu^2\int_{e^{-\frac{\pi}{4}}}^{e^{\frac{\pi}{4}}}\int_0^{(\cos(\log r_2^2))^{1/\mu}}r_1^{2x+2\mu-1}r_2^{2y-1}(\delta(r_1,r_2))^{-2s}dr_1 dr_2.
  \]
  Now \eqref{eq:delta_lower_bound} gives us
  \[
    \lambda(x,y,s)\leq
    16\pi^2\mu^2 C^{2|s|}\int_{e^{-\frac{\pi}{4}}}^{e^{\frac{\pi}{4}}}\int_0^{(\cos(\log r_2^2))^{1/\mu}}\frac{r_1^{2x+2\mu-2s\mu-1}r_2^{2y-1}}{\left(\cos\left(\log r_2^2\right)-r_1^{\mu}\right)^{2s}}dr_1 dr_2
  \]
  and \eqref{eq:delta_upper_bound} gives us
  \[
    \lambda(x,y,s)\geq
    \frac{16\pi^2\mu^2}{C^{2|s|}}\int_{e^{-\frac{\pi}{4}}}^{e^{\frac{\pi}{4}}}\int_0^{(\cos(\log r_2^2))^{1/\mu}}\frac{r_1^{2x+2\mu-2s\mu-1}r_2^{2y-1}}{\left(\cos\left(\log r_2^2\right)-r_1^{\mu}\right)^{2s}}dr_1 dr_2.
  \]
  If we make the change of variables $u(r)=\left(\frac{r_1^\mu}{\cos\left(\log r_2^2\right)},\log r_2^2\right)$, then we have $r(u)=\left(\left(u_1\cos u_2\right)^{1/\mu},e^{\frac{1}{2}u_2}\right)$, so
  \[
    dr_1\wedge dr_2=\left(\frac{1}{\mu}u_1^{\frac{1-\mu}{\mu}}(\cos u_2)^{\frac{1}{\mu}}\right)\left(\frac{1}{2}e^{\frac{1}{2}u_2}\right)du_1\wedge du_2.
  \]
  Hence,
  \begin{multline*}
    \lambda(x,y,s)\leq\\
    8\pi^2\mu C^{2|s|}\int_{-\frac{\pi}{2}}^{\frac{\pi}{2}}\int_0^1 u_1^{\frac{2x}{\mu}+1-2s}\left(\cos u_2\right)^{\frac{2x}{\mu}+2-4s}e^{y u_2}(1-u_1)^{-2s}du_1 du_2
  \end{multline*}
  and
  \[
    \lambda(x,y,s)\geq
    \frac{8\pi^2\mu}{C^{2|s|}}\int_{-\frac{\pi}{2}}^{\frac{\pi}{2}}\int_0^1 u_1^{\frac{2x}{\mu}+1-2s}\left(\cos u_2\right)^{\frac{2x}{\mu}+2-4s}e^{y u_2}(1-u_1)^{-2s}du_1 du_2
  \]
  Since the integrand is separable, we may decompose these into the product of two integrals in one variable using \eqref{eq:alpha_definition_new} and \eqref{eq:beta_definition_new}, from which
  \begin{multline}
  \label{eq:lambda_bounds}
    \frac{8\pi^2\mu}{C^{2|s|}}\alpha\left(\frac{2x}{\mu}+2-2s,1-2s\right)\beta\left(\frac{2x}{\mu}+3-4s,y\right)\leq\lambda(x,y,s)\\
    \leq 8\pi^2\mu C^{2|s|}\alpha\left(\frac{2x}{\mu}+2-2s,1-2s\right)\beta\left(\frac{2x}{\mu}+3-4s,y\right)
  \end{multline}
  follows.  We know that $\alpha\left(\frac{2x}{\mu}+2-2s,1-2s\right)$ is integrable if and only if $\frac{2x}{\mu}+2-2s>0$ and $1-2s>0$, $\beta\left(\frac{2x}{\mu}+3-4s,y\right)$ is integrable if and only if $\frac{2x}{\mu}+3-4s>0$, and
  \[
    \frac{2x}{\mu}+3-4s=\left(\frac{2x}{\mu}+2-2s\right)+(1-2s),
  \]
  so \eqref{eq:lambda_integrability} follows.  We may combine \eqref{eq:alpha_normalized_bound_new} and \eqref{eq:beta_normalized_bound_new} with \eqref{eq:lambda_bounds} to obtain \eqref{eq:lambda_normalized_bounds}.
\end{proof}

\section{Sobolev spaces}

Since $\Omega_\mu$ is a relatively compact domain with smooth boundary, Sobolev spaces are easily defined even for non-integer orders.  Let
\begin{align*}
  \tilde U_1&=\{z\in\tilde M:1<|z_2|<e^{\pi\mu}\}\text{ and}\\
  \tilde U_2&=\{z\in\tilde M:e^{-\frac{\pi}{2}\mu}<|z_2|<e^{\frac{\pi}{2}\mu}\}.
\end{align*}
For $j\in\{1,2\}$, $\tilde U_j$ may be identified with a domain in $\mathbb{C}^2$, and since the metric on $\tilde M$ is equivalent to the Euclidean metric on $\mathbb{C}^2$ on a neighborhood of $\tilde U_j$, we may define the $L^2$ Sobolev space $W^s(\tilde U_j\cap\tilde\Omega)$ as the restrictions of elements in $W^s(\mathbb{C}^2)$ for any $s\in\mathbb{R}$.  Since \eqref{eq:nonvanishing_first_derivative} implies that each $\tilde U_j\cap\tilde\Omega$ is a Lipschitz domain, Theorem 1.4.3.1 in \cite{Gri85} implies that this definition will agree with other common definitions.  If we set $U_j^\mu=\{[z]\in M_\mu:z\in\tilde U_j\}$ for $j\in\{1,2\}$, then $\{U_j^\mu\}_{j\in\{1,2\}}$ is an open cover for $\Omega_\mu$.  For $j\in\{1,2\}$, $U_j^\mu$ is naturally biholomorphic to $\tilde U_j$, so we may define $W^s(U_j^\mu\cap\Omega_\mu)$ for any $s\in\mathbb{R}$.  By a partition of unity, we may now define $W^s(\Omega_\mu)$ for any $s\in\mathbb{R}$.

On the other hand, $D_\mu$ does not have a Lipschitz boundary, and the metric for $D_\mu$ does not extend smoothly to a neighborhood of $\overline{D_\mu}$, so defining the Sobolev spaces intrinsically is problematic.  Instead, we define $W^s(D_\mu;dV_{D_\mu})$ to be the set of all measurable functions $f$ on $D_\mu$ such that $f\circ\varphi_\mu\in W^s(\Omega_\mu)$, where $\varphi_\mu$ is the biholomorphism induced by \eqref{eq:varphi_defn}.  Since the metric on $D_\mu$ is obtained by pushing forward the metric on $\Omega_\mu$ via $\varphi_\mu$, $W^s(D_\mu;dV_{D_\mu})$ has the usual definition when $s$ is an integer, provided that we take derivatives with respect to the orthonormal coordinates given by \eqref{eq:L_1_w_defn} and \eqref{eq:L_2_w_defn}.

For $s<\frac{1}{2}$ and $\delta$ defined by \eqref{eq:delta_defn}, let $L^2(D_\mu;\delta^{-2s}dV_{D_\mu})$ denote the weighted $L^2$ space on $D_\mu$ with inner product
\[
  \left<f,g\right>_{L^2(D_\mu;\delta^{-2s}dV_{D_\mu})}=\int_{D_\mu}f\bar g\delta^{-2s}dV_{D_\mu}.
\]
For $s<\frac{3}{2}$ and $\{L_1^w,L_2^w\}$ defined by \eqref{eq:L_1_w_defn} and \eqref{eq:L_2_w_defn}, let $W^1(D_\mu;\delta^{2-2s}dV_{D_\mu})$ denote the weighted Sobolev space on $D_\mu$ with inner product
\[
  \left<f,g\right>_{W^1(D_\mu;\delta^{2-2s}dV_{D_\mu})}=\int_{D_\mu}f\bar gdV_{D_\mu}+\sum_{\ell=1}^2\int_{D_\mu}(L_\ell^w f)\overline{L_\ell^w g}\delta^{2-2s}dV_{D_\mu}.
\]
We have the following:
\begin{lem}
\label{lem:Sobolev_imbeddings}
  If $0\leq s<\frac{1}{2}$, then
  \begin{equation}
  \label{eq:Sobolev_imbeddings}
    W^1(D_\mu;\delta^{2-2s}dV_{D_\mu})\subset W^s(D_\mu;dV_{D_\mu})\subset L^2(D_\mu;\delta^{-2s}dV_{D_\mu}).
  \end{equation}
\end{lem}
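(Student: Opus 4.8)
The plan is to transport all three spaces back to $\Omega_\mu$ through the isometry $\varphi_\mu$ of Lemma~\ref{lem:biholomorphism}, where the boundary is smooth, and there to recognize the chain as two classical weighted estimates; crucially the argument is carried out for an \emph{arbitrary} (not necessarily holomorphic) $f$. Because $\varphi_\mu$ is an isometry, the volume element \eqref{eq:D_volume_element} is the push-forward of $dV_{\Omega_\mu}$, the frame $\{L_1^w,L_2^w\}$ of \eqref{eq:L_1_w_defn}--\eqref{eq:L_2_w_defn} is the push-forward of the orthonormal frame on $\Omega_\mu$, and $\delta\circ\varphi_\mu=\dist(\cdot,\partial\Omega_\mu)$ by \eqref{eq:delta_defn}. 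Hence $f\mapsto f\circ\varphi_\mu$ identifies $L^2(D_\mu;\delta^{-2s}dV_{D_\mu})$ with $L^2(\Omega_\mu;d^{-2s})$, identifies $W^1(D_\mu;\delta^{2-2s}dV_{D_\mu})$ with the standard weighted Sobolev space of $u\in L^2(\Omega_\mu)$ whose full first-order derivative (measured in the flat metric) satisfies $d^{1-s}\nabla u\in L^2(\Omega_\mu)$, and---by the very definition of $W^s(D_\mu;dV_{D_\mu})$---identifies it with $W^s(\Omega_\mu)$; here $d:=\dist(\cdot,\partial\Omega_\mu)$. It therefore suffices to prove, for all complex-valued $u$ and $0\le s<\tfrac12$, that $W^1(\Omega_\mu;d^{2-2s})\subset W^s(\Omega_\mu)\subset L^2(\Omega_\mu;d^{-2s})$.

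Next I would localize with a finite partition of unity subordinate to the flat charts in which $M_\mu$ is isometric to Euclidean space. On pieces supported away from $\partial\Omega_\mu$ the weight $d$ is bounded above and below by positive constants, so there all three weighted norms are comparable to their unweighted counterparts and the chain reduces to the standard $W^1\subset W^s\subset L^2$. On a chart meeting the boundary, the chart carries $\Omega_\mu$ isometrically onto a bounded domain with smooth (hence Lipschitz) boundary in $\mathbb{R}^4$, the frame derivatives become the ordinary gradient $\nabla$, Lebesgue measure replaces $dV_{\Omega_\mu}$, and $d$ is comparable to the Euclidean distance to the boundary (both vanish to exactly first order along the smooth boundary). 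The problem thus reduces to the following Euclidean statement, recorded for a bounded Lipschitz domain $\Omega\subset\mathbb{R}^N$ with $d(x)=\dist(x,\partial\Omega)$ and $0<s<\tfrac12$ (the case $s=0$ being immediate, since then $W^s=L^2=L^2(\Omega;d^{0})$ and the weighted $W^1$ norm already dominates the $L^2$ norm):
\begin{equation*}
  \int_\Omega\frac{|u|^2}{d^{2s}}\,dx\le C\|u\|_{W^s(\Omega)}^2,\qquad \|u\|_{W^s(\Omega)}^2\le C\Big(\|u\|_{L^2(\Omega)}^2+\int_\Omega|\nabla u|^2 d^{2-2s}\,dx\Big).
\end{equation*}

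The first inequality is precisely the fractional Hardy inequality on a Lipschitz domain, which holds for every $u\in W^s(\Omega)$ exactly in the range $0<s<\tfrac12$ (below $\tfrac12$ there is no trace obstruction), and I would invoke it as a known result. For the second inequality I would estimate the Gagliardo seminorm $|u|_{W^s(\Omega)}^2=\int_\Omega\int_\Omega\frac{|u(x)-u(y)|^2}{|x-y|^{N+2s}}\,dx\,dy$ by splitting the domain of integration into the local region $|x-y|<\tfrac12\max(d(x),d(y))$ and its complement. On the local region the segment $[x,y]$ lies in $\Omega$ with $d$ comparable to $d(x)$ along it, so writing $u(x)-u(y)=\int_0^1\nabla u((1-\tau)x+\tau y)\cdot(x-y)\,d\tau$, applying Cauchy--Schwarz in $\tau$, and integrating out the $|x-y|$ variable (using $\int_{|h|<R}|h|^{2-N-2s}\,dh\approx R^{2-2s}$, convergent since $2-2s>0$) bounds this contribution by $C\int_\Omega|\nabla u|^2 d^{2-2s}$. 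On the complement, where $|x-y|\ge\tfrac12 d(x)$, I would use $|u(x)-u(y)|^2\le 2|u(x)|^2+2|u(y)|^2$ together with $\int_{|h|\ge d(x)/2}|h|^{-N-2s}\,dh\approx d(x)^{-2s}$ (convergent since $2s>0$) to bound the contribution by $C\int_\Omega|u|^2 d^{-2s}$, and then close the estimate with the ordinary weighted Hardy inequality $\int_\Omega|u|^2 d^{-2s}\le C(\|u\|_{L^2}^2+\int_\Omega|\nabla u|^2 d^{2-2s})$, valid for $2s\neq1$. I expect the left-hand inclusion to be the main obstacle: it is here that one must use the \emph{full} gradient---so that the estimate applies to every $f$, holomorphic or not---justify that nearby points are joined by segments inside $\Omega$, and invoke the weighted Hardy inequality. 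The reduction itself also requires verifying that $\delta$, the push-forward volume element, and the frame derivatives pass, up to uniform constants, to the Euclidean distance, Lebesgue measure, and gradient in each boundary chart.
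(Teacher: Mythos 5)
Your proposal is correct, and its skeleton is the paper's: transport everything to $\Omega_\mu$ via the isometry $\varphi_\mu$ (the middle space is identified with $W^s(\Omega_\mu)$ by definition), localize by a partition of unity to the Lipschitz pieces $\tilde U_j\cap\tilde\Omega$, and prove the two weighted inequalities there. The difference is in how the two local inequalities are handled. The paper simply cites Theorem C.2 of \cite{ChSh01} for the first inclusion and Theorem 1.4.4.3 of \cite{Gri85} for the second (the latter is exactly the fractional Hardy inequality you invoke), and then patches using the one-sided comparisons $\left(\dist(z,\partial(\tilde U_j\cap\tilde\Omega))\right)^{2-2s}\le\left(\dist(z,\partial\tilde\Omega)\right)^{2-2s}$ and $\left(\dist(z,\partial(\tilde U_j\cap\tilde\Omega))\right)^{-2s}\ge\left(\dist(z,\partial\tilde\Omega)\right)^{-2s}$; note that these inequalities go in exactly the right directions, so no two-sided comparability of the distances is needed --- your looser claim that $d$ is ``comparable'' to the local distance is only true on the supports of cutoffs kept away from the artificial boundary of the chart piece, and the paper's formulation sidesteps this point. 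Your self-contained proof of the harder (left-hand) inclusion --- splitting the Gagliardo seminorm at $|x-y|<\tfrac12\max(d(x),d(y))$, using segments inside $\Omega$ and $\int_{|h|<R}|h|^{2-N-2s}\,dh\approx R^{2-2s}$ near the diagonal, and the far region plus weighted Hardy otherwise --- is sound and is essentially a proof of the cited Theorem C.2; it buys transparency about where $s<1$ and $s>0$ enter, at the cost of length. One small correction: the weighted Hardy inequality $\int_\Omega|u|^2d^{-2s}\le C\bigl(\|u\|_{L^2}^2+\int_\Omega|\nabla u|^2d^{2-2s}\bigr)$ without boundary vanishing holds only for $2s<1$, not for all $2s\neq1$ (for $s\ge\tfrac12$ the left side is already infinite for $u\equiv1$); this is harmless here since the lemma assumes $s<\tfrac12$. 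You also implicitly use the equivalence of the Gagliardo norm with the restriction definition of $W^s$ on Lipschitz domains, which the paper secures via Theorem 1.4.3.1 of \cite{Gri85}.
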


\begin{proof}
  On $\tilde U_j\cap\tilde\Omega$ for $j\in\{1,2\}$, the first inclusion follows from Theorem C.2 in \cite{ChSh01} and the second inclusion follows from Theorem 1.4.4.3 in \cite{Gri85}.  On $\tilde U_j\cap\tilde\Omega$, we have
  \[
    \left(\dist\left(z,\partial\left(\tilde U_j\cap\tilde\Omega\right)\right)\right)^{2-2s}\leq\left(\dist\left(z,\partial\tilde\Omega\right)\right)^{2-2s}
  \]
  and
  \[
    \left(\dist\left(z,\partial\left(\tilde U_j\cap\tilde\Omega\right)\right)\right)^{-2s}\geq\left(\dist\left(z,\partial\tilde\Omega\right)\right)^{-2s},
  \]
  so we may use a partition of unity to prove \eqref{eq:Sobolev_imbeddings} on $\Omega_\mu$.  Pushing forward the result along $\varphi_\mu$, we obtain \eqref{eq:Sobolev_imbeddings}.
\end{proof}

The following is well-known for relatively compact domains in $\mathbb{C}^n$ with Lipschitz boundaries, since holomorphic functions have harmonic components.  On our non-Lipschitz domain with the metric that is singular on the boundary, we are still able to confirm this by pulling back to $\Omega_\mu$.
\begin{lem}
  For $0\leq s<\frac{1}{2}$ and $\delta$ defined by \eqref{eq:delta_defn},
  \begin{multline}
  \label{eq:Sobolev_embeddings_holomorphic}
    \mathcal{O}(D_\mu)\cap W^1(D_\mu;\delta^{2-2s}dV_{D_\mu})=\mathcal{O}(D_\mu)\cap W^s(D_\mu;dV_{D_\mu})=\\
    \mathcal{O}(D_\mu)\cap L^2(D_\mu;\delta^{-2s}dV_{D_\mu}).
  \end{multline}
\end{lem}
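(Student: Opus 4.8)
The plan is to derive both equalities in \eqref{eq:Sobolev_embeddings_holomorphic} from the single non-trivial inclusion
\[
  \mathcal{O}(D_\mu)\cap L^2(D_\mu;\delta^{-2s}dV_{D_\mu})\subseteq\mathcal{O}(D_\mu)\cap W^1(D_\mu;\delta^{2-2s}dV_{D_\mu}).
\]
Indeed, intersecting the chain \eqref{eq:Sobolev_imbeddings} from Lemma \ref{lem:Sobolev_imbeddings} with $\mathcal{O}(D_\mu)$ already gives
\[
  \mathcal{O}(D_\mu)\cap W^1(D_\mu;\delta^{2-2s}dV_{D_\mu})\subseteq\mathcal{O}(D_\mu)\cap W^s(D_\mu;dV_{D_\mu})\subseteq\mathcal{O}(D_\mu)\cap L^2(D_\mu;\delta^{-2s}dV_{D_\mu}),
\]
so the reverse inclusion above collapses all three spaces and proves \eqref{eq:Sobolev_embeddings_holomorphic}.

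To prove that inclusion I would transfer the problem to $\Omega_\mu$, where the boundary is smooth and the metric is the smooth flat metric inherited from $M_\mu$. Given $f\in\mathcal{O}(D_\mu)\cap L^2(D_\mu;\delta^{-2s}dV_{D_\mu})$, set $F=f\circ\varphi_\mu$, which is holomorphic on $\Omega_\mu$ since $\varphi_\mu$ is a biholomorphism. Because $\varphi_\mu$ is an isometry and $\delta$ satisfies \eqref{eq:delta_defn}, pulling back the volume element and the weight turns the hypothesis into $\int_{\Omega_\mu}|F|^2\left(\dist(\cdot,\partial\Omega_\mu)\right)^{-2s}dV_{M_\mu}<\infty$, where $dV_{M_\mu}$ is the Riemannian volume element of $M_\mu$. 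Since the push-forwards of the orthonormal frame $\{L_1^z,L_2^z\}$ are precisely the frame $\{L_1^w,L_2^w\}$ of \eqref{eq:L_1_w_defn} and \eqref{eq:L_2_w_defn}, it suffices to bound $\sum_{\ell=1}^2\int_{\Omega_\mu}|L_\ell^z F|^2\left(\dist(\cdot,\partial\Omega_\mu)\right)^{2-2s}dV_{M_\mu}$ by a constant multiple of the first integral. As $\re F$ and $\im F$ are harmonic, this sum is comparable to $\int_{\Omega_\mu}|\nabla F|^2\dist(\cdot,\partial\Omega_\mu)^{2-2s}dV_{M_\mu}$, so the whole matter reduces to a weighted gradient estimate for a harmonic function on a smoothly bounded, relatively compact domain.

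For that estimate I would invoke the interior Cauchy (mean-value) bound for harmonic functions on the flat manifold $M_\mu$. Because $M_\mu$ is flat and $\overline{\Omega_\mu}$ is compact, there is a fixed $\eps_0>0$ so that for every $x\in\Omega_\mu$ the geodesic ball of radius $r_x=\min\{\tfrac12\dist(x,\partial\Omega_\mu),\eps_0\}$ is isometric to a Euclidean ball contained in $\Omega_\mu$; on it the standard estimate yields $|\nabla F(x)|^2\leq C\, r_x^{-6}\int_{B(x,r_x)}|F|^2\,dV_{M_\mu}$ (real dimension $4$). Multiplying by $\dist(x,\partial\Omega_\mu)^{2-2s}$, integrating in $x$, and exchanging the order of integration by Tonelli, the decisive point is that $|x-y|<\tfrac12\dist(x,\partial\Omega_\mu)$ forces $\dist(x,\partial\Omega_\mu)\approx\dist(y,\partial\Omega_\mu)$, so the $x$-integral of $\dist(x,\partial\Omega_\mu)^{-2s-4}$ over $\{x:y\in B(x,r_x)\}$ (a set of measure $\approx\dist(y,\partial\Omega_\mu)^4$) is bounded by $C\dist(y,\partial\Omega_\mu)^{-2s}$. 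This produces $\int_{\Omega_\mu}|\nabla F|^2\dist(\cdot,\partial\Omega_\mu)^{2-2s}dV_{M_\mu}\leq C\int_{\Omega_\mu}|F|^2\dist(\cdot,\partial\Omega_\mu)^{-2s}dV_{M_\mu}<\infty$, and pushing forward along $\varphi_\mu$ gives $f\in W^1(D_\mu;\delta^{2-2s}dV_{D_\mu})$.

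The main obstacle is making the interior estimate uniform all the way up to $\partial\Omega_\mu$ and carefully justifying the distance comparison in the Tonelli step. Both are resolved by the two structural features of $\Omega_\mu$: flatness of $M_\mu$ guarantees that small geodesic balls are genuine Euclidean balls, so the Euclidean gradient bound applies with a constant independent of $x$, while relative compactness of $\overline{\Omega_\mu}$ supplies the uniform radius $\eps_0$ and confines all the geometry to a fixed compact set. The region $\{x:\dist(x,\partial\Omega_\mu)\geq\eps_0\}$, where $r_x=\eps_0$ rather than $\tfrac12\dist(x,\partial\Omega_\mu)$, causes no difficulty, since there both weights are bounded above and below and the estimate reduces to an interior elliptic bound on a compactly contained subdomain.
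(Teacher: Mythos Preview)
Your proposal is correct and follows the same strategy as the paper: reduce to the single inclusion $\mathcal{O}\cap L^2(\delta^{-2s})\subset\mathcal{O}\cap W^1(\delta^{2-2s})$, pull back along the isometry $\varphi_\mu$ to the smooth domain $\Omega_\mu$, and prove the weighted gradient estimate for harmonic functions there before pushing forward. The only difference is that the paper simply cites Lemme~1 of \cite{Det81} for that estimate, while you supply a self-contained mean-value/Tonelli argument that reproduces its content.
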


\begin{proof}
  Since holomorphic functions have harmonic coefficients,
  \[
    \mathcal{O}(\Omega_\mu)\cap L^2(\Omega_\mu;\delta^{-2s}dV_{\Omega_\mu})\subset\mathcal{O}(\Omega_\mu)\cap W^1(\Omega_\mu;\delta^{2-2s}dV_{\Omega_\mu})
  \]
  follows from Lemme 1 in \cite{Det81}.  If we push this inclusion forward along $\varphi_\mu$, we obtain the corresponding result for $D_\mu$.  Combined with \eqref{eq:Sobolev_imbeddings}, we obtain \eqref{eq:Sobolev_embeddings_holomorphic}.
\end{proof}

Since $\{\theta^1_z,\theta^2_z\}$ is smooth in a neighborhood of $\overline{\Omega_\mu}$, we may define $W^s_{1,0}(\Omega_\mu)$ to be linear combinations of $\{\theta^1_z,\theta^2_z\}$ with coefficients in $W^s(\Omega_\mu)$, and $W^s_{2,0}(\Omega_\mu)$ to be multiples of $\theta^1_z\wedge\theta^2_z$ by functions in $W^s(\Omega_\mu)$.  Pushing forward these spaces to $D_\mu$, we see that we may define $W^s_{1,0}(D_\mu;dV_{D_\mu})$ (resp. $W^s_{2,0}(D_\mu;dV_{D_\mu})$) to be linear combinations of $\theta^1_w$ and $\theta^2_w$ (resp. $\theta^1_w\wedge\theta^2_w$) with coefficients in $W^s(D_\mu;dV_{D_\mu})$ as defined above.

\section{Bases for Bergman Spaces}

Now we have the tools to compute an orthonormal basis for the weighted Bergman spaces $A^2_{p,0}(D_\mu;\delta^{-2s}dV_{D_\mu}):=\mathcal{O}(D_\mu)\cap L^2_{p,0}(D_\mu;\delta^{-2s}dV_{D_\mu})$, where $0\leq s<\frac{1}{2}$ and $p\in\{0,1,2\}$.  Since we have chosen our metric so that $D_\mu$ is isometric to $\Omega_\mu$, this will give us an orthonormal basis for the Bergman space $A^2_{p,0}(\Omega_\mu;\delta^{-2s}dV_{\Omega_\mu})$.
\begin{lem}
\label{lem:orthonormal_basis}
  For $s<\frac{1}{2}$ and $\delta$ defined by \eqref{eq:delta_defn}, an orthonormal basis for
  \[
    A^2(D_\mu;\delta^{-2s}dV_{D_\mu})
  \]
  is given by
  \begin{equation}
  \label{eq:orthonormal_basis}
    \set{(\lambda(j,k,s))^{-1/2} w_1^j w_2^k:j,k\in\mathbb{Z},j>(s-1)\mu},
  \end{equation}
  where $\lambda(j,k,s)$ is defined by \eqref{eq:lambda_defn}, an orthonormal basis for
  \[
    A^2_{1,0}(D_\mu;\delta^{-2s}dV_{D_\mu})
  \]
  is given by
  \begin{multline}
  \label{eq:orthonormal_basis_one_form}
    \set{(\lambda(j-\mu,k,s))^{-1/2} 2\mu w_1^{j-1} w_2^k dw_1:j,k\in\mathbb{Z},j>s\mu}\cup\\
    \set{(\lambda(j,k,s))^{-1/2} w_1^j w_2^k\theta^2_w:j,k\in\mathbb{Z},j>(s-1)\mu},
  \end{multline}
  where $\theta^2_w$ is given by \eqref{eq:theta_w_2_defn}, and an orthonormal basis for
  \[
    A^2_{2,0}(D_\mu;\delta^{-2s}dV_{D_\mu})
  \]
  is given by
  \begin{equation}
  \label{eq:orthonormal_basis_two_form}
    \set{(\lambda(j-\mu,k,s))^{-1/2} 2\mu w_1^{j-1} w_2^k dw_1\wedge\theta^2_w:j,k\in\mathbb{Z},j>s\mu}.
  \end{equation}
\end{lem}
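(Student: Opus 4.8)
The plan is to reduce all three cases to one scalar computation by using two coframes at once: the orthonormal coframe $\{\theta^1_w,\theta^2_w\}$ of \eqref{eq:theta_w_1_defn}--\eqref{eq:theta_w_2_defn} for computing norms, and the holomorphic coframe $\{dw_1,\theta^2_w\}$ for representing holomorphic forms with holomorphic coefficients. The three facts that make this work are that $\theta^2_w$ lies in both coframes, that $dw_1=c(w)\theta^1_w$ with $|c(w)|^2=\tfrac{|w_1|^{2-2\mu}}{4\mu^2}$ by \eqref{eq:theta_w_1_inverse}, and that $\theta^1_w\wedge\theta^2_w$ is a unit $(2,0)$-form. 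Since $\theta^1_w\perp\theta^2_w$ pointwise, the $dw_1$- and $\theta^2_w$-parts of any form are orthogonal at every point, so every norm collapses to a scalar integral of the type $\lambda(\cdot,\cdot,s)$ from \eqref{eq:lambda_defn}.

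First I would check orthonormality. In the scalar case $\langle w_1^j w_2^k,w_1^{j'}w_2^{k'}\rangle$ is the integral of a radial function against $w_1^{j-j'}w_2^{k-k'}$, so \eqref{eq:D_integration_formula_circular_symmetry_extended} kills it unless $(j,k)=(j',k')$, where it equals $\lambda(j,k,s)$; the index bound $j>(s-1)\mu$ is exactly finiteness in \eqref{eq:lambda_integrability}. The form families are handled the same way after factoring out the radial weights $|c(w)|^2$ or $|\theta^2_w|^2\equiv1$; the two $(1,0)$-families are mutually orthogonal because $dw_1\parallel\theta^1_w\perp\theta^2_w$, and substitution gives $\|2\mu\,w_1^{j-1}w_2^k\,dw_1\|^2=\lambda(j-\mu,k,s)$ and $\|w_1^j w_2^k\theta^2_w\|^2=\lambda(j,k,s)$, with the stated bounds $j>s\mu$ and $j>(s-1)\mu$ coming from \eqref{eq:lambda_integrability}. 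The $(2,0)$ case follows from $dw_1\wedge\theta^2_w=\tfrac{1}{w_2}dw_1\wedge dw_2$, which is holomorphic and has norm $\lambda(j-\mu,k,s)$ after the $2\mu\,w_1^{j-1}w_2^k$ normalization.

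The substantive step is completeness. I would invoke the isometric $\mathbb{T}^2$-action $w\mapsto(e^{i\theta_1}w_1,e^{i\theta_2}w_2)$, which preserves both the measure and $\delta$ by \eqref{eq:isometry_push_forward} and \eqref{eq:delta_radial_symmetry} and so is unitary on the weighted $L^2$ space. Its isotypic (angular Fourier) decomposition expresses each $L^2$ element as an $L^2$-convergent sum over characters $e^{i(j\theta_1+k\theta_2)}$, and for a holomorphic function the $(j,k)$-component is precisely the Laurent term $c_{jk}w_1^j w_2^k$. Because each projection is norm non-increasing, $c_{jk}\neq0$ forces $w_1^j w_2^k\in L^2$, i.e. $j>(s-1)\mu$ by \eqref{eq:lambda_integrability}; hence every element of $A^2(D_\mu;\delta^{-2s}dV_{D_\mu})$ is the $L^2$-limit of its admissible Laurent partial sums, proving completeness of \eqref{eq:orthonormal_basis}. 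For forms I would write a holomorphic $(1,0)$-form in the holomorphic coframe as $h_1\,dw_1+h_2\,\theta^2_w$ with $h_1,h_2\in\mathcal{O}(D_\mu)$, expand each in a Laurent series, and apply the scalar argument to the two orthogonal pieces separately; the analogous reduction $\psi=g\,dw_1\wedge\theta^2_w$ with $g\in\mathcal{O}(D_\mu)$ settles the $(2,0)$ case.

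I expect the main obstacle to be this completeness step, and in particular the vanishing of the Laurent coefficients of non-integrable monomials. This is exactly where the rotational symmetry of Lemma \ref{lem:isometry} and \eqref{eq:isometry_push_forward} is indispensable: without a unitary $\mathbb{T}^2$-action there is no clean mechanism forcing $c_{jk}=0$ when $\lambda(j,k,s)=\infty$, and the non-Lipschitz, metrically singular boundary of $D_\mu$ rules out appeals to standard boundary-regularity or density theorems. A secondary item needing care is compatibility of the two coframes, namely that expressing a holomorphic form in $\{dw_1,\theta^2_w\}$ really yields holomorphic coefficients; I would confirm this by converting from the standard representation $f_1\,dw_1+f_2\,dw_2$.
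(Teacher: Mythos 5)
Your proposal is correct, and its skeleton coincides with the paper's: Laurent expansion of holomorphic objects, orthogonality of monomials from the rotation-invariance identity \eqref{eq:D_integration_formula_circular_symmetry_extended}, norms computed as $\lambda(j,k,s)$ and $\lambda(j-\mu,k,s)$ via \eqref{eq:theta_w_1_inverse}, index ranges from \eqref{eq:lambda_integrability}, and the reduction of the $(1,0)$ and $(2,0)$ cases to the scalar case through the pointwise-orthogonal splitting into the $dw_1$- and $\theta^2_w$-parts and wedging with $\theta^2_w$. Where you genuinely diverge is the completeness step, which is indeed the substantive point. The paper avoids any appeal to the torus action on the weighted $L^2$ space itself: it introduces the exhausting subdomains $D_\mu^\epsilon=\{\epsilon<|w_1|<1,\ |\log|w_2|^2|<\arccos(|w_1|^\mu)\}$, on which every monomial is square-integrable, expands $\norm{f}^2_{L^2(D_\mu^\epsilon;\delta^{-2s}dV_{D_\mu})}$ as an orthogonal sum over Laurent coefficients, and then lets $\epsilon\to 0^+$ using the Monotone Convergence Theorem to conclude that $d_{j,k}=0$ whenever $\lambda(j,k,s)=\infty$. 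Your route instead makes the unitary $\mathbb{T}^2$-action (justified by \eqref{eq:delta_radial_symmetry} and the rotation-invariant volume form) do this work: the isotypic projections are orthogonal, hence norm non-increasing, and they recover exactly the Laurent terms, which both kills the non-integrable coefficients and gives $L^2$-convergence of the admissible partial sums directly from Hilbert-space theory. The trade-off: the paper's argument is more elementary (only MCT, no representation-theoretic input) but requires the exhaustion device to postpone the integrability hypothesis \eqref{eq:D_integration_formula_circular_symmetry_extended_finiteness_condition}; yours packages orthogonality, coefficient vanishing, and $L^2$-convergence of the series into one standard mechanism, at the cost of invoking strong continuity of the action and the isotypic decomposition, and of verifying (as you rightly flag) that a holomorphic form has holomorphic coefficients in the coframe $\{dw_1,\theta^2_w\}$ --- which holds since $w_1\neq 0$ on $D_\mu$, a point the paper leaves implicit as well.
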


\begin{proof}
  Since $D_\mu$ is a Reinhardt domain, every holomorphic function on $D_\mu$ can be written as a convergent Laurent series in $w_1$ and $w_2$, i.e., if $f\in\mathcal{O}(D_\mu)$, then there exist coefficients $\{d_{j,k}\}_{j,k\in\mathbb{Z}}\subset\mathbb{C}$ such that
  \begin{equation}
  \label{eq:Laurent_series_expansion}
    f(w)=\sum_{j,k\in\mathbb{Z}} d_{j,k} w_1^j w_2^k
  \end{equation}
  on $D_\mu$.  To postpone addressing the integrability condition in \eqref{eq:D_integration_formula_circular_symmetry_extended_finiteness_condition}, we define
  \[
    D_\mu^\epsilon=\set{w\in\mathbb{C}^2:\epsilon<|w_1|<1\text{ and }\abs{\log|w_2|^2}<\arccos\left(|w_1|^{\mu}\right)}
  \]
  for any $0<\epsilon<1$.  Since $D_\mu^\epsilon\subset D_\mu$, \eqref{eq:Laurent_series_expansion} also holds on $D_\mu^\epsilon$.  Observe that $w\in D_\mu^\epsilon$ if and only if $\epsilon<|w_1|<1$ and $a(|w_1|)<|w_2|<b(|w_1|)$, where $a$ and $b$ are defined by \eqref{eq:a_defn} and \eqref{eq:b_defn}.  For $j,k,\ell,m\in\mathbb{Z}$, \eqref{eq:delta_radial_symmetry} implies that
  \[
    w_1^jw_2^k\bar w_1^\ell\bar w_2^m(\delta(w))^{-2s}=w_1^{j-\ell}w_2^{k-m}|w_1|^{2\ell}|w_2|^{2m}(\delta(|w_1|,|w_2|))^{-2s}
  \]
  so \eqref{eq:D_integration_formula_circular_symmetry_extended} implies that $w_1^j w_2^k$ and $w_1^\ell w_2^m$ are orthogonal in $$L^2(D_\mu^\epsilon;\delta^{-2s}dV_{D_\mu})$$ whenever $j\neq\ell$ or $k\neq m$.  Hence, $\{w_1^j w_2^k\}_{j,k\in\mathbb{Z}}$ is an orthogonal basis for
  \[
    A^2(D_\mu^\epsilon;\delta^{-2s}dV_{D_\mu}).
  \]
  For $f$ given by \eqref{eq:Laurent_series_expansion}, we have
  \begin{equation}
    \norm{f}^2_{L^2(D_\mu^\epsilon;\delta^{-2s}dV_{D_\mu})}=\sum_{j,k\in\mathbb{Z}}|d_{j,k}|^2\int_{D_\mu^\epsilon}|w_1|^{2j}|w_2|^{2k}(\delta(w))^{-2s}dV_{D_\mu}.
  \end{equation}
  The Monotone Convergence Theorem implies that
  \[
    \norm{f}^2_{L^2(D_\mu^\epsilon;\delta^{-2s}dV_{D_\mu})}\rightarrow\norm{f}^2_{L^2(D_\mu;\delta^{-2s}dV_{D_\mu})}
  \]
  as $\epsilon\rightarrow 0^+$, so if $f\in L^2(D_\mu;\delta^{-2s}dV_{D_\mu})$, then for each $j,k\in\mathbb{Z}$ either $d_{j,k}=0$ or
  \[
    \int_{D_\mu}|w_1|^{2j}|w_2|^{2k}(\delta(w))^{-2s}dV_{D_\mu}<\infty.
  \]
  By \eqref{eq:lambda_integrability}, we see that $d_{j,k}=0$ whenever $\frac{j}{\mu}+1-s\leq 0$, or equivalently $j\leq(s-1)\mu$.  Since \eqref{eq:lambda_defn} gives us the tool needed to normalize our basis when $j>(s-1)\mu$, \eqref{eq:orthonormal_basis} is an orthonormal basis for $A^2(D_\mu;\delta^{-2s}dV_{D_\mu})$.
  
  Since $\theta^2_w$ is holomorphic and unit-length, the second set in \eqref{eq:orthonormal_basis_one_form} is an orthonormal basis for the subspace of $A^2_{1,0}(D_\mu;\delta^{-2s}dV_{D_\mu})$ spanned by $\theta^2_w$ by the same computations used to derive \eqref{eq:orthonormal_basis}.  For $j,k\in\mathbb{Z}$, we may use \eqref{eq:theta_w_1_inverse} to compute
  \begin{align*}
    \norm{2\mu w_1^{j-1}w_2^kdw_1}^2_{L^2(D_\mu;\delta^{-2s}dV_{D_\mu})}&=\int_{D_\mu}|w_1|^{2(j-\mu)}|w_2|^{2k}(\delta(w))^{-2s}dV_{D_\mu}\\
    &=\lambda(j-\mu,k,s),
  \end{align*}
  and by \eqref{eq:lambda_integrability}, we see that this is finite if and only if $\frac{j}{\mu}-s>0$.  We may now proceed as before to show that the first set in \eqref{eq:orthonormal_basis_one_form} completes an orthonormal basis for $A^2_{1,0}(D_\mu;\delta^{-2s}dV_{D_\mu})$.
  
  By taking the wedge product of each basis element in \eqref{eq:orthonormal_basis_one_form} with the holomorphic unit-length one-form $\theta^2_w$ that is orthogonal to $dw_1$, we obtain an orthonormal basis for $A^2_{2,0}(D_\mu;\delta^{-2s}dV_{D_\mu})$, given by \eqref{eq:orthonormal_basis_two_form}.


\end{proof}

\section{The Bergman Projection}

For $p\in\{0,1,2\}$, let $P_{\Omega_\mu}^p:L^2_{p,0}(\Omega_\mu)\rightarrow A^2_{p,0}(\Omega_\mu)$ denote the Bergman projection on $\Omega_\mu$ with respect to the metric on $M_\mu$.  Let $P_{D_\mu}^p:L^2_{p,0}(D_\mu;dV_{D_\mu})\rightarrow A^2_{p,0}(D_\mu;dV_{D_\mu})$ denote the Bergman projection on $D_\mu$ with respect to the metric obtained by pushing forward the metric on $\Omega_\mu$.  Suppose $0\leq s<\frac{1}{2}$.  If $f\in W^s_{p,0}(\Omega_\mu)$, then we have defined $W^s_{p,0}(D_\mu;dV_{D_\mu})$ so that there must exist $\tilde f\in W^s_{p,0}(D_\mu;dV_{D_\mu})$ satisfying $\varphi_\mu^*\tilde f=f$ on $\Omega_\mu$, where $\varphi_\mu$ is the biholomorphism induced by \eqref{eq:varphi_defn}.  By construction, $P_{D_\mu}^p\tilde f(\varphi_\mu([z]))=P_{\Omega_\mu}^pf([z])$.  Hence, we have
\begin{lem}
\label{lem:Sobolev_regularity_equivalence}
  For $0\leq s<\frac{1}{2}$ and $0\leq p\leq 2$, $P_{\Omega_\mu}^p$ is continuous in $W^s_{p,0}(\Omega_\mu)$ if and only if $P_{D_\mu}^p$ is continuous in $W^s_{p,0}(D_\mu;dV_{D_\mu})$.
\end{lem}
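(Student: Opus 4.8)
The plan is to route everything through the single operator $\varphi_\mu^*$, the pullback by the biholomorphism $\varphi_\mu$ of Lemma~\ref{lem:biholomorphism}, and to observe that this one map simultaneously identifies the two scales of Sobolev spaces and intertwines the two Bergman projections. Since $W^s_{p,0}(D_\mu;dV_{D_\mu})$ was defined precisely so that $\norm{f}_{W^s_{p,0}(D_\mu;dV_{D_\mu})}=\norm{\varphi_\mu^* f}_{W^s_{p,0}(\Omega_\mu)}$, the map $\varphi_\mu^*\colon W^s_{p,0}(D_\mu;dV_{D_\mu})\to W^s_{p,0}(\Omega_\mu)$ is an isometric isomorphism for each $s\in\mathbb{R}$, and the same holds at the $L^2$ level because the metric on $D_\mu$ is the pushforward metric. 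The whole lemma therefore reduces to the conjugation identity $P_{\Omega_\mu}^p=\varphi_\mu^*\,P_{D_\mu}^p\,(\varphi_\mu^*)^{-1}$.

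To establish that identity I would exploit that $\varphi_\mu$ is a \emph{biholomorphic isometry}. Because it is an isometry for the pushed-forward metric, $\varphi_\mu^*$ preserves the pointwise inner product on $(p,0)$-forms together with the volume element, so it is unitary from $L^2_{p,0}(D_\mu;dV_{D_\mu})$ onto $L^2_{p,0}(\Omega_\mu)$. Because it is biholomorphic, $\varphi_\mu^*$ carries holomorphic $(p,0)$-forms to holomorphic $(p,0)$-forms, hence maps the closed subspace $A^2_{p,0}(D_\mu;dV_{D_\mu})$ bijectively onto $A^2_{p,0}(\Omega_\mu)$. A unitary that carries one closed subspace onto another automatically intertwines the associated orthogonal projections, which is exactly the relation $\varphi_\mu^*\,P_{D_\mu}^p=P_{\Omega_\mu}^p\,\varphi_\mu^*$ already recorded in the discussion preceding the statement.

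With the intertwining and the isometric identification of the Sobolev norms in hand, the equivalence is immediate: given $f\in W^s_{p,0}(\Omega_\mu)$, write $f=\varphi_\mu^* g$ with $g=(\varphi_\mu^*)^{-1}f\in W^s_{p,0}(D_\mu;dV_{D_\mu})$, so that $P_{\Omega_\mu}^p f=\varphi_\mu^*\,P_{D_\mu}^p g$, whence $\norm{P_{\Omega_\mu}^p f}_{W^s_{p,0}(\Omega_\mu)}=\norm{P_{D_\mu}^p g}_{W^s_{p,0}(D_\mu;dV_{D_\mu})}$ and $\norm{f}_{W^s_{p,0}(\Omega_\mu)}=\norm{g}_{W^s_{p,0}(D_\mu;dV_{D_\mu})}$; the reverse substitution gives the converse, with identical operator norms. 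I do not expect a genuine obstacle here, as the analytic content has been front-loaded into the construction of $\varphi_\mu$ and into the very definition of the weighted Sobolev spaces; the only point deserving care is confirming that $\varphi_\mu^*$ is $L^2$-unitary, preserves holomorphicity, and respects the $(p,0)$-form structure simultaneously for each $p\in\{0,1,2\}$. Should one prefer to avoid the abstract unitary argument, the explicit orthonormal bases of Lemma~\ref{lem:orthonormal_basis} furnish an alternative, since $\varphi_\mu^*$ sends each basis form on $D_\mu$ to a holomorphic form on $\Omega_\mu$ of equal norm, reducing the intertwining to a direct coordinate computation.
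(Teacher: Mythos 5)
Your proposal is correct and matches the paper's own argument: the paper likewise observes that $W^s_{p,0}(D_\mu;dV_{D_\mu})$ was defined by pullback along $\varphi_\mu$, so $\varphi_\mu^*$ isometrically identifies the Sobolev spaces, and that the pushforward metric makes $\varphi_\mu^*$ unitary on $L^2$ and holomorphicity-preserving, whence $P_{D_\mu}^p\tilde f\circ\varphi_\mu=P_{\Omega_\mu}^p f$ and the equivalence is immediate. The only difference is presentational: the paper compresses this into the discussion preceding the lemma, while you spell out the unitary-intertwining argument explicitly.
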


Now we have the tools to easily prove our main regularity results.
\begin{thm}
\label{thm:regularity}
  For
  \begin{equation}
  \label{eq:s_range_positive}
    0\leq s<\min\set{\frac{1}{2},\frac{1-\lfloor\mu\rfloor}{\mu}+1},
  \end{equation}
  the Bergman Projection $P_{\Omega_\mu}^0$ is continuous in $W^s(\Omega_\mu)$.  For
  \begin{equation}
  \label{eq:s_range_positive_2}
    0\leq s<\min\set{\frac{1}{2},\frac{1}{\mu}},
  \end{equation}
  the Bergman Projection $P_{\Omega_\mu}^1$ is continuous in $W^s_{1,0}(\Omega_\mu)$ and the Bergman Projection $P_{\Omega_\mu}^2$ is continuous in $W^s_{2,0}(\Omega_\mu)$.
\end{thm}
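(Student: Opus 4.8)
The plan is to transport the problem to the Reinhardt model $D_\mu$ and convert the Sobolev bound into a weighted $L^2$ bound that can be read off the monomial bases of Section 6. By Lemma~\ref{lem:Sobolev_regularity_equivalence} it suffices to bound $P_{D_\mu}^p$ on $W^s_{p,0}(D_\mu;dV_{D_\mu})$. Because $P_{D_\mu}^p f$ is holomorphic, the equivalence \eqref{eq:Sobolev_embeddings_holomorphic} replaces $\|P_{D_\mu}^p f\|_{W^s}$ by the weighted norm $\|P_{D_\mu}^p f\|_{L^2(D_\mu;\delta^{-2s}dV_{D_\mu})}$, while the second inclusion in \eqref{eq:Sobolev_imbeddings} gives $\|f\|_{L^2(D_\mu;\delta^{-2s}dV_{D_\mu})}\le\|f\|_{W^s}$. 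Hence the theorem follows once I show the weighted bound $\|P_{D_\mu}^p f\|_{L^2(\delta^{-2s})}\lesssim\|f\|_{L^2(\delta^{-2s})}$ on the relevant range of $s$, where I abbreviate $L^2(\delta^{-2s})=L^2(D_\mu;\delta^{-2s}dV_{D_\mu})$.

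To prove this I decompose $f=\sum_{j,k}f_{j,k}$ into its Fourier modes under the torus action; by \eqref{eq:D_integration_formula_circular_symmetry_extended} these modes are orthogonal in every rotation-invariant weighted norm, and by Lemma~\ref{lem:orthonormal_basis} the projection acts diagonally, keeping only the one admissible monomial in each mode. For $p=0$ this gives $P_{D_\mu}^0 f=\sum_{j>-\mu,k}\lambda(j,k,0)^{-1}\langle f,w_1^jw_2^k\rangle\,w_1^jw_2^k$, and evaluating the weighted norm of the image against the orthogonal monomials yields $\|P_{D_\mu}^0 f\|_{L^2(\delta^{-2s})}^2=\sum_{j>-\mu,k}\lambda(j,k,0)^{-2}\lambda(j,k,s)\,|\langle f,w_1^jw_2^k\rangle|^2$. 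Estimating each coefficient by Cauchy--Schwarz within the single mode $(j,k)$, namely $|\langle f,w_1^jw_2^k\rangle|\le\|f_{j,k}\|_{L^2(\delta^{-2s})}\,\lambda(j,k,-s)^{1/2}$, and summing, I arrive at $\|P_{D_\mu}^0 f\|_{L^2(\delta^{-2s})}^2\le\big(\sup_{j>-\mu,k}\lambda(j,k,0)^{-2}\lambda(j,k,s)\lambda(j,k,-s)\big)\|f\|_{L^2(\delta^{-2s})}^2$. The decisive input is \eqref{eq:lambda_normalized_bounds}, which bounds each ratio by an explicit expression whose denominator vanishes exactly at the integrability thresholds recorded in \eqref{eq:lambda_integrability}; since that expression increases as the index decreases, the supremum sits at the smallest admissible index and is finite precisely when that index lies strictly inside the integrability range. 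The identical computation with \eqref{eq:orthonormal_basis_one_form} and \eqref{eq:orthonormal_basis_two_form} handles $p=1$ and $p=2$, the only change being that the controlling index becomes $j-\mu$.

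Reading off the thresholds: the factor $(1-2s)$ in \eqref{eq:lambda_normalized_bounds} always forces $s<\tfrac12$. For $p\in\{1,2\}$ the admissible monomials have $j\ge1$, so the controlling index is $x=j-\mu\ge1-\mu$; inserting $x=1-\mu$ into \eqref{eq:lambda_normalized_bounds} keeps the bound finite exactly for $s<\tfrac1\mu$, giving \eqref{eq:s_range_positive_2}. For $p=0$ the admissible monomials have $j\ge-\lfloor\mu\rfloor$, and the crude supremum over all of them is finite only for $s<\tfrac{\mu-\lfloor\mu\rfloor}{\mu}$, the obstruction being the single lowest layer $x=j=-\lfloor\mu\rfloor$. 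I expect the treatment of this layer to be the main obstacle. Once it is separated off, the remaining indices satisfy $j\ge-\lfloor\mu\rfloor+1$, and \eqref{eq:lambda_normalized_bounds} at $x=-\lfloor\mu\rfloor+1$ is finite exactly for $s<\tfrac{1-\lfloor\mu\rfloor}{\mu}+1$, which is \eqref{eq:s_range_positive}. The difficulty is that the monomials $w_1^{-\lfloor\mu\rfloor}w_2^k$ cease to belong to $W^s(D_\mu;dV_{D_\mu})$ once $s\ge\tfrac{\mu-\lfloor\mu\rfloor}{\mu}$, so the naive basis estimate is unavailable for them and their contribution to $P_{D_\mu}^0 f$ must be controlled by a dedicated argument exploiting the sharp behaviour of $\delta$ near $\{w_1=0\}$ from Lemma~\ref{lem:delta_estimates}. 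This borderline lowest layer is exactly what makes the regularity exponent of $P_{\Omega_\mu}^0$ exceed that of $P_{\Omega_\mu}^1$ and $P_{\Omega_\mu}^2$.
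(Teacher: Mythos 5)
Your overall strategy is exactly the paper's: reduce to $D_\mu$ by Lemma~\ref{lem:Sobolev_regularity_equivalence}, trade $W^s$ bounds for bounds in $L^2(D_\mu;\delta^{-2s}dV_{D_\mu})$ using \eqref{eq:Sobolev_imbeddings} and \eqref{eq:Sobolev_embeddings_holomorphic}, diagonalize the projection on Laurent modes via Lemma~\ref{lem:orthonormal_basis}, and control the diagonal ratios by \eqref{eq:lambda_normalized_bounds}; your per-mode Cauchy--Schwarz is the same estimate the paper phrases as Bessel's inequality for the orthonormal set $\set{\delta^s w_1^jw_2^k\lambda(j,k,-s)^{-1/2}}$. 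For $p=2$, and for the $dw_1$-family when $p=1$, this is complete and agrees with the paper's proof. (Do note, though, that \eqref{eq:orthonormal_basis_one_form} contains a second family spanned by $\theta^2_w$, so by \eqref{eq:Bergman_projection_decomposition_one_form} the $p=1$ case also requires the $p=0$ bound; ``the only change being that the controlling index becomes $j-\mu$'' is not the whole story.)

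The $p=0$ step you defer to a ``dedicated argument'' is, however, not a closable gap: it is a counterexample, and you have in effect uncovered an error in the theorem as stated. You are right that for $\mu\notin\mathbb{Z}$ the integers $j>-\mu$ begin at $j=-\lfloor\mu\rfloor$; the paper's proof instead asserts ``if $j$ is an integer and $j>-\mu$, then $j\geq1-\lfloor\mu\rfloor$,'' which holds only for integer $\mu$ (the correct uniform bound is $j\geq 1-\lceil\mu\rceil$). By Lemma~\ref{lem:orthonormal_basis} and \eqref{eq:lambda_integrability}, $w_1^{-\lfloor\mu\rfloor}$ belongs to $A^2(D_\mu;dV_{D_\mu})$ but lies in $L^2(D_\mu;\delta^{-2s}dV_{D_\mu})$ --- equivalently, by \eqref{eq:Sobolev_embeddings_holomorphic}, in $W^s$ --- only for $s<\frac{\mu-\lfloor\mu\rfloor}{\mu}$; moreover, the monotone-convergence argument inside the proof of Lemma~\ref{lem:orthonormal_basis} shows that each Laurent mode of a holomorphic function in $L^2(\delta^{-2s})$ is individually in $L^2(\delta^{-2s})$, so no cancellation among modes can help. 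Now run the construction of Theorem~\ref{thm:irregularity} one layer lower: $\tilde f=\exp\left(-|w_1|^{-\mu}\right)w_1^{-\lfloor\mu\rfloor}$ pulls back to an element of $C^\infty(\overline{\Omega_\mu})$, and $P_{D_\mu}^0\tilde f$ is a positive constant multiple of $w_1^{-\lfloor\mu\rfloor}$. Hence for every non-integer $\mu>1$, $P_{\Omega_\mu}^0$ fails to be continuous in $W^s$ for all $s\geq\frac{\mu-\lfloor\mu\rfloor}{\mu}$, a value strictly inside the claimed range since $\mu<\lfloor\mu\rfloor+1\leq2\lfloor\mu\rfloor$ forces $\frac{\mu-\lfloor\mu\rfloor}{\mu}<\min\set{\frac12,\frac{1-\lfloor\mu\rfloor}{\mu}+1}$. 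So no argument exploiting Lemma~\ref{lem:delta_estimates} can rescue \eqref{eq:s_range_positive}; the statement must be repaired by replacing $\lfloor\mu\rfloor$ with $\lceil\mu\rceil$, after which your supremum argument closes with no further ideas (and is already complete as written when $\mu\in\mathbb{Z}$, since then the problematic layer is absent). The same replacement is forced on the $p=1$ statement through its $\theta^2_w$ component, while $p=2$ is unaffected; Corollary~\ref{cor:sharp_regularity} then needs a different parameter, e.g.\ $\mu=\frac{1}{1-r}$, for which $\frac{1-\lceil\mu\rceil}{\mu}+1=r$, so Theorem~\ref{thm:main} itself survives.
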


\begin{proof}
  Suppose that $s$ satisfies \eqref{eq:s_range_positive}.  Let $f\in W^s(D_\mu;dV_{D_\mu})$ and let $Z_0=\{(j,k)\in\mathbb{Z}^2:j>-\mu\}$.  By Lemma \ref{lem:orthonormal_basis}, $\set{\frac{w_1^j w_2^k}{\sqrt{\lambda(j,k,0)}}}_{(j,k)\in Z_0}$ is an orthonormal basis for $A^2(D_\mu;dV_{D_\mu})$, so
  \[
    P_{D_\mu}^0f(w)=\sum_{(j,k)\in Z_0}\int_{D_\mu} f(u)\frac{\bar u_1^j\bar u_2^k}{\sqrt{\lambda(j,k,0)}}dV_{D_\mu}\frac{w_1^j w_2^k}{\sqrt{\lambda(j,k,0)}}.
  \]
  If $j$ is an integer and $j>-\mu$, then $j\geq 1-\lfloor\mu\rfloor$.  By \eqref{eq:s_range_positive}, $j>\mu(s-1)$.  Hence, Lemma \ref{lem:orthonormal_basis} implies that for $s$ satisfying \eqref{eq:s_range_positive} $\set{\frac{w_1^j w_2^k}{\sqrt{\lambda(j,k,s)}}}_{(j,k)\in Z_0}$ is an orthonormal basis for $A^2(D_\mu;\delta^{-2s}dV_{D_\mu})$, and so
  \[
    \norm{P_{D_\mu}^0f}^2_{L^2(D_\mu;\delta^{-2s}dV_{D_\mu})}=\sum_{(j,k)\in Z_0}\abs{\int_{D_\mu}f(w)\frac{\bar w_1^j\bar w_2^k}{\sqrt{\lambda(j,k,0)}}dV_{D_\mu}}^2\frac{\lambda(j,k,s)}{\lambda(j,k,0)}.
  \]
  Since $j>\mu(s-1)$ implies that $\frac{2j}{\mu}+2-2s$ is uniformly bounded away from zero when $j$ is an integer, \eqref{eq:lambda_normalized_bounds} implies that there exists a constant $C_{\mu,s}>1$ such that
  \[
    \frac{\lambda(j,k,s)\lambda(j,k,-s)}{(\lambda(j,k,0))^2}\leq C_{\mu,s}
  \]
  for all $(j,k)\in Z_0$, and hence
  \[
    \norm{P_{D_\mu}^0f}^2_{L^2(D_\mu;\delta^{-2s}dV_{D_\mu})}\leq C_{\mu,s}\sum_{(j,k)\in Z_0}\abs{\int_{D_\mu}f(w)\frac{\bar w_1^j\bar w_2^k}{\sqrt{\lambda(j,k,-s)}}dV_{D_\mu}}^2.
  \]
  By \eqref{eq:lambda_defn} and \eqref{eq:D_integration_formula_circular_symmetry_extended}, $\set{\frac{\delta^{s}(w)w_1^j w_2^k}{\sqrt{\lambda(j,k,-s)}}}_{(j,k)\in Z_0}$ is an orthonormal set, so we have
  \[
    \norm{P_{D_\mu}^0f}^2_{L^2(D_\mu;\delta^{-2s}dV_{D_\mu})}\leq C_{\mu,s}\int_{D_\mu}|f(w)|^2(\delta(w))^{-2s}dV_{D_\mu}.
  \]
  Using \eqref{eq:Sobolev_imbeddings} for the upper bound and \eqref{eq:Sobolev_embeddings_holomorphic} for the lower bound, we have shown that $P_{D_\mu}^0$ is continuous in $W^s(D_\mu;dV_{D_\mu})$ whenever \eqref{eq:s_range_positive} holds, so the $p=0$ case follows from Lemma \ref{lem:Sobolev_regularity_equivalence}.
  
  Suppose that $s$ satisfies \eqref{eq:s_range_positive_2}.  Let $f\in W^s(D_\mu;dV_{D_\mu})$ and let $Z_1=\{(j,k)\in\mathbb{Z}^2:j>0\}$.  By Lemma \ref{lem:orthonormal_basis}, $\set{\frac{2\mu w_1^{j-1} w_2^k}{\sqrt{\lambda(j-\mu,k,0)}}dw_1}_{(j,k)\in Z_1}$ is an orthonormal basis for the span of $dw_1$ in $A^2_{1,0}(D_\mu;dV_{D_\mu})$.  Using \eqref{eq:theta_w_1_inverse} to express $dw_1$ as a multiple of the unit-length vector $\theta^1_w$, we obtain
  \[
    (P_{D_\mu}^1(f\theta^1_w))(w)=-\sum_{(j,k)\in Z_1}\int_{D_\mu} f(u)\frac{\bar u_1^{j}\bar u_2^ke^{i\log|u_2|^2}}{|u_1|^\mu\sqrt{\lambda(j-\mu,k,0)}}dV_{D_\mu}\frac{2\mu w_1^{j-1} w_2^k}{\sqrt{\lambda(j-\mu,k,0)}}dw_1.
  \]
  If $j$ is an integer and $j>0$, then $j\geq 1$ so \eqref{eq:s_range_positive_2} implies that $j>\mu s$.  Hence, Lemma \ref{lem:orthonormal_basis} implies that for $s$ satisfying \eqref{eq:s_range_positive_2} $\set{\frac{2\mu w_1^{j-1} w_2^k}{\sqrt{\lambda(j-\mu,k,s)}}dw_1}_{(j,k)\in Z_1}$ is an orthonormal basis for the span of $dw_1$ in $A^2(D_\mu;\delta^{-2s}dV_{D_\mu})$, and so
  \begin{multline*}
    \norm{P_{D_\mu}^1(f\theta^1_w)}^2_{L^2(D_\mu;\delta^{-2s}dV_{D_\mu})}=\\
    \sum_{(j,k)\in Z_1}\abs{\int_{D_\mu} f(w)\frac{\bar w_1^{j}\bar w_2^ke^{i\log|w_2|^2}}{|w_1|^\mu\sqrt{\lambda(j-\mu,k,0)}}dV_{D_\mu}}^2\frac{\lambda(j-\mu,k,s)}{\lambda(j-\mu,k,0)}.
  \end{multline*}
  As before, \eqref{eq:lambda_normalized_bounds} implies that there exists a constant $\tilde C_{\mu,s}>1$ such that
  \[
    \frac{\lambda(j-\mu,k,s)\lambda(j-\mu,k,-s)}{(\lambda(j-\mu,k,0))^2}\leq\tilde C_{\mu,s}
  \]
  for all $(j,k)\in Z_1$, and hence
  \begin{multline*}
    \norm{P_{D_\mu}^1(f\theta^1_w)}^2_{L^2(D_\mu;\delta^{-2s}dV_{D_\mu})}\leq\\
    \tilde C_{\mu,s}
    \sum_{(j,k)\in Z_1}\abs{\int_{D_\mu} f(w)\frac{\bar w_1^{j}\bar w_2^ke^{i\log|w_2|^2}}{|w_1|^\mu\sqrt{\lambda(j-\mu,k,-s)}}dV_{D_\mu}}^2.
  \end{multline*}
  By \eqref{eq:lambda_defn} and \eqref{eq:D_integration_formula_circular_symmetry_extended}, $\set{\frac{\delta^{s}(w)w_1^j w_2^ke^{-i\log|w_2|^2}}{|w_1|^\mu\sqrt{\lambda(j-\mu,k,-s)}}}_{(j,k)\in Z_1}$ is an orthonormal set, so we have
  \[
    \norm{P_{D_\mu}^1(f\theta^1_w)}^2_{L^2(D_\mu;\delta^{-2s}dV_{D_\mu})}\leq\tilde C_{\mu,s}\int_{D_\mu}|f(w)|^2(\delta(w))^{-2s}dV_{D_\mu}.
  \]
  Using \eqref{eq:Sobolev_imbeddings} for the upper bound and \eqref{eq:Sobolev_embeddings_holomorphic} for the lower bound, we have shown that $P_{D_\mu}^1$ is continuous from the span of $\theta^1_w$ in $W^s_{1,0}(D_\mu;dV_{D_\mu})$ to $W^s_{1,0}(D_\mu;dV_{D_\mu})$ whenever \eqref{eq:s_range_positive_2} holds.
  
  Suppose that \eqref{eq:s_range_positive_2} holds.  Since $\lfloor\mu\rfloor\leq\mu$, we have $\frac{1-\lfloor\mu\rfloor}{\mu}+1\geq\frac{1}{\mu}$, and hence \eqref{eq:s_range_positive_2} implies \eqref{eq:s_range_positive}.  For $f\in W^s_{1,0}(D_\mu;dV_{D_\mu})$, we have the decomposition $f=f_1\theta^1_w+f_2\theta^2_w$, where $f_1,f_2\in W^s(D_\mu;dV_{D_\mu})$.  If we compare \eqref{eq:orthonormal_basis} with \eqref{eq:orthonormal_basis_one_form}, we see that
  \begin{equation}
  \label{eq:Bergman_projection_decomposition_one_form}
    P_{D_\mu}^1f=P_{D_\mu}^1(f_1\theta^1_w)+(P_{D_\mu}^0f_2)\theta^2_w.
  \end{equation}
  Since we have both \eqref{eq:s_range_positive} and \eqref{eq:s_range_positive_2}, we have already shown that both terms in \eqref{eq:Bergman_projection_decomposition_one_form} are bounded in the $W^s_{1,0}(D_\mu;dV_{D_\mu})$ norm by $\norm{f}_{W^s_{1,0}(D_\mu;dV_{D_\mu})}$, and so the $p=1$ case is proven.
  
  Suppose once more that \eqref{eq:s_range_positive_2} holds.  For $f\in W^s_{2,0}(D_\mu;dV_{D_\mu})$, we have the decomposition $f=f_0\theta^1_w\wedge\theta^2_w$, where $f_0\in W^s(D_\mu;dV_{D_\mu})$.  Comparing \eqref{eq:orthonormal_basis_one_form} and \eqref{eq:orthonormal_basis_two_form}, we have
  \begin{equation}
  \label{eq:Bergman_projection_decomposition_two_form}
    P_{D_\mu}^2f=P_{D_\mu}^1(f_0\theta^1_w)\wedge\theta^2_w.
  \end{equation}
  Once again, we have already proven the necessary estimate to show that \eqref{eq:Bergman_projection_decomposition_two_form} implies continuity for $P_{D_\mu}^2$ in $W^s_{2,0}(D_\mu;dV_{D_\mu})$.

\end{proof}

\begin{thm}
\label{thm:irregularity}
  For
  \begin{equation}
  \label{eq:s_range_negative}
    \frac{1-\lfloor\mu\rfloor}{\mu}+1\leq s<\frac{1}{2},
  \end{equation}
  there exists $f\in C^\infty(\overline{\Omega_\mu})$ such that $P_{\Omega_\mu}^0 f\notin W^s(\Omega_\mu)$.  For
  \begin{equation}
  \label{eq:s_range_negative_2}
    \frac{1}{\mu}\leq s<\frac{1}{2},
  \end{equation}
   and $p\in\{1,2\}$, there exists $f\in C^\infty_{p,0}(\overline{\Omega_\mu})$ such that $P_{\Omega_\mu}^p f\notin W^s(\Omega_\mu)$.
\end{thm}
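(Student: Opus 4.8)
The plan is to reduce everything to $D_\mu$ via Lemma \ref{lem:Sobolev_regularity_equivalence} and to exploit the explicit bases of Lemma \ref{lem:orthonormal_basis} together with the sharp integrability dichotomy \eqref{eq:lambda_integrability}. The one structural fact I need is the \emph{easy} half of the Sobolev embeddings: the second inclusion in \eqref{eq:Sobolev_imbeddings} says $W^s(D_\mu;dV_{D_\mu})\subset L^2(D_\mu;\delta^{-2s}dV_{D_\mu})$, and applying it coefficientwise in the orthonormal frame $\{\theta^1_w,\theta^2_w\}$ upgrades it to $W^s_{p,0}(D_\mu;dV_{D_\mu})\subset L^2_{p,0}(D_\mu;\delta^{-2s}dV_{D_\mu})$. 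Thus it suffices to exhibit smooth data whose Bergman projection fails to lie in the weighted $L^2$ space; no harmonicity or closed-range input is required for this direction.

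First I would isolate a critical basis element that survives in the unweighted Bergman space but is expelled from the weighted one. For $p=0$ and $s$ in the range \eqref{eq:s_range_negative}, fix $j_0=1-\lfloor\mu\rfloor$ and any $k_0\in\mathbb{Z}$. Since $j_0>-\mu$, \eqref{eq:lambda_integrability} gives $\lambda(j_0,k_0,0)<\infty$, so $w_1^{j_0}w_2^{k_0}\in A^2(D_\mu;dV_{D_\mu})$; but $\frac{j_0}{\mu}+1-s\leq 0$ exactly when $s\geq\frac{1-\lfloor\mu\rfloor}{\mu}+1$, whence $\lambda(j_0,k_0,s)=\infty$ and this monomial is absent from the weighted basis \eqref{eq:orthonormal_basis}. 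For $p\in\{1,2\}$ and $s$ in \eqref{eq:s_range_negative_2}, the analogue is the $j=1$ member of the $dw_1$-family in \eqref{eq:orthonormal_basis_one_form} and \eqref{eq:orthonormal_basis_two_form}, namely $2\mu w_2^{k_0}\,dw_1$ (wedged with $\theta^2_w$ when $p=2$): its squared $L^2(dV_{D_\mu})$-norm is $\lambda(1-\mu,k_0,0)<\infty$, while its weighted norm $\lambda(1-\mu,k_0,s)=\infty$ precisely when $s\geq\frac1\mu$. In each case I denote the normalized critical element by $e_c\in A^2_{p,0}(D_\mu;dV_{D_\mu})\setminus A^2_{p,0}(D_\mu;\delta^{-2s}dV_{D_\mu})$.

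Next I would produce the smooth datum by a soft density argument. Since $e_c$ is a nonzero element of $L^2_{p,0}(D_\mu;dV_{D_\mu})$ and compactly supported smooth forms are dense there, I may choose $\tilde f$ smooth with compact support in $D_\mu$ such that $\langle\tilde f,e_c\rangle_{L^2(D_\mu;dV_{D_\mu})}\neq 0$. Setting $f=\varphi_\mu^*\tilde f$, with $\varphi_\mu$ the biholomorphism of \eqref{eq:varphi_defn}, yields $f\in C^\infty_{p,0}(\overline{\Omega_\mu})$ (it is smooth and compactly supported in $\Omega_\mu$). Because $e_c$ is one of the $dV_{D_\mu}$-orthonormal basis elements of Lemma \ref{lem:orthonormal_basis}, the coefficient of $e_c$ in the expansion $P^p_{D_\mu}\tilde f=\sum_\alpha\langle\tilde f,e_\alpha\rangle e_\alpha$ is exactly $\langle\tilde f,e_c\rangle\neq 0$; equivalently, the Laurent coefficient of the holomorphic form $P^p_{D_\mu}\tilde f$ along the critical monomial is nonzero.

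Finally I would close the argument by uniqueness of Laurent expansions on the Reinhardt domain $D_\mu$. If $P^p_{D_\mu}\tilde f$ belonged to $A^2_{p,0}(D_\mu;\delta^{-2s}dV_{D_\mu})$, then Lemma \ref{lem:orthonormal_basis} with this $s$ would expand it in the \emph{weighted} basis, whose index set omits the critical element; since the Laurent coefficients of a holomorphic form are intrinsic (independent of the weighted $L^2$ structure), this would force the critical coefficient to vanish, contradicting the previous step. Hence $P^p_{D_\mu}\tilde f\notin L^2_{p,0}(D_\mu;\delta^{-2s}dV_{D_\mu})$, and by the embedding of the first paragraph $P^p_{D_\mu}\tilde f\notin W^s_{p,0}(D_\mu;dV_{D_\mu})$; pulling back along $\varphi_\mu$ gives $P^p_{\Omega_\mu}f\notin W^s_{p,0}(\Omega_\mu)$ with $f\in C^\infty_{p,0}(\overline{\Omega_\mu})$. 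The main obstacle here is conceptual rather than computational: one must verify that the orthogonal families of Lemma \ref{lem:orthonormal_basis} are genuinely orthogonal in every weighted space (this follows from \eqref{eq:delta_radial_symmetry} and \eqref{eq:D_integration_formula_circular_symmetry_extended}), so that the coefficient extracted by the $dV_{D_\mu}$-projection is the same intrinsic Laurent coefficient that membership in $A^2_{p,0}(D_\mu;\delta^{-2s}dV_{D_\mu})$ is forced to annihilate.
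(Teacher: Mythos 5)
Your proposal is correct, and its skeleton coincides with the paper's: reduce to $D_\mu$, identify the critical element ($w_1^{1-\lfloor\mu\rfloor}$ for $p=0$; the $j=1$ member of the $dw_1$-family, wedged with $\theta^2_w$ when $p=2$) that lies in the unweighted Bergman space but is expelled from the weighted one by \eqref{eq:lambda_integrability}, and convert failure of weighted $L^2$ membership into failure of $W^s$ membership via the easy inclusion $W^s(D_\mu;dV_{D_\mu})\subset L^2(D_\mu;\delta^{-2s}dV_{D_\mu})$. Where you genuinely diverge is the production of the smooth datum. The paper takes the explicit form $\tilde f=\exp\left(-|w_1|^{-\mu}\right)w_1^j$ (resp. $\exp\left(-|w_1|^{-\mu}\right)dw_1$): because the extra factor is radial, \eqref{eq:D_integration_formula_circular_symmetry_extended} shows $P_{D_\mu}^p\tilde f$ is literally a nonzero constant times the critical element, and the remaining work is to check, using \eqref{eq:abs_z_1} and the rapid decay of the exponential factor as $|w_1|\to 0$, that the pullback extends to an element of $C^\infty(\overline{\Omega_\mu})$ — a nontrivial point, since $\varphi_\mu$ degenerates at the boundary component where $z_1=0$. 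Your compactly supported bump with $\langle\tilde f,e_c\rangle\neq 0$ (which exists by density) makes smoothness of the pullback trivial, but in exchange you must argue that membership of $P_{D_\mu}^p\tilde f$ in the weighted Bergman space would force the critical Laurent coefficient to vanish. That step is sound: it is exactly what the paper establishes inside the proof of Lemma \ref{lem:orthonormal_basis} (the Monotone Convergence argument on $D_\mu^\epsilon$ showing that either $d_{j,k}=0$ or $\lambda(j,k,s)<\infty$), and for holomorphic functions weighted $L^2$ convergence implies locally uniform convergence, so partial sums in the weighted basis do pin down the Laurent coefficients. It would be cleaner to cite that step of the lemma's proof explicitly rather than appeal to ``uniqueness of Laurent expansions'' in the abstract, but the logic is complete. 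In short: the paper buys an explicit, computable counterexample at the cost of a boundary-smoothness verification; you buy a softer, shorter construction at the cost of an abstract coefficient argument.
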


\begin{proof}
  Suppose that \eqref{eq:s_range_negative} holds.  Let $j=1-\lfloor\mu\rfloor$.  Then $j>-\mu$, so $w_1^j\in L^2(D_\mu;dV_{D_\mu})$ by Lemma \ref{lem:orthonormal_basis}.  However, for $s$ satisfying \eqref{eq:s_range_negative}, we have $j\leq\mu(s-1)$, so $w_1^j\notin L^2(D_\mu;\delta^{-2s}dV_{D_\mu})$ by Lemma \ref{lem:orthonormal_basis}, and hence $w_1^j\notin W^s(D_\mu;dV_{D_\mu})$ by \eqref{eq:Sobolev_embeddings_holomorphic}.  Set $\tilde f(w)=\exp\left(-\frac{1}{|w_1|^{\mu}}\right)w_1^j$.  Since $|\tilde f|\leq|w_1|^j$, we still have $\tilde f\in L^2(D_\mu;dV_{D_\mu})$, and hence we may use \eqref{eq:D_integration_formula_circular_symmetry_extended} and Lemma \ref{lem:orthonormal_basis} to compute the Bergman projection
  \[
    P_{D_\mu}^0\tilde f(w)=\frac{1}{\lambda(j,0,0)}\int_{D_\mu}\exp\left(-\frac{1}{|u_1|^{\mu}}\right)|u_1|^{2j}dV_{D_\mu}w_1^j.
  \]
  Since $P_{D_\mu}^0\tilde f$ is a constant multiple of $w_1^j$, $P_{D_\mu}^0\tilde f\notin W^s(D_\mu;dV_{D_\mu})$.

  Define $f\in L^2(\Omega_\mu)$ by $f([z])=\tilde f(\varphi_\mu([z]))$, where $\varphi$ is defined by \eqref{eq:varphi_defn}.  By \eqref{eq:abs_z_1}, $|[z]_1|$ is well-defined independently of the choice of $z$ representing $[z]$, and we have
  \[
    f([z])=\exp\left(-\frac{2}{|[z]_1|}\right)(\varphi_\mu([z]))_1^j.
  \]
  Since \eqref{eq:D_defn} guarantees that $\abs{(\varphi_\mu([z]))_1^j}<1$ on $\Omega_\mu$, $f$ easily extends to $f\in C^\infty(\overline{\Omega_\mu})$ by setting $f([z])=0$ whenever $|[z]_1|=0$.  By Lemma \ref{lem:Sobolev_regularity_equivalence}, we still have $P_{\Omega_\mu}^0f\notin W^s(\Omega_\mu)$.

  Suppose that \eqref{eq:s_range_negative_2} holds.  Then $dw_1\in L^2_{1,0}(D_\mu;dV_{D_\mu})$ by Lemma \ref{lem:orthonormal_basis}.  However, for $s$ satisfying \eqref{eq:s_range_negative}, we have $1\leq\mu s$, so $dw_1\notin L^2_{1,0}(D_\mu;\delta^{-2s}dV_{D_\mu})$ by Lemma \ref{lem:orthonormal_basis}, and hence $dw_1\notin W^s_{1,0}(D_\mu;dV_{D_\mu})$ by \eqref{eq:Sobolev_embeddings_holomorphic}.  Set $\tilde f_1(w)=\exp\left(-\frac{1}{|w_1|^{\mu}}\right)dw_1$.  Since $|\tilde f_1|\leq|dw_1|$, we still have $\tilde f_1\in L^2_{1,0}(D_\mu;dV_{D_\mu})$, and hence we may use \eqref{eq:theta_w_1_inverse}, \eqref{eq:D_integration_formula_circular_symmetry_extended}, and Lemma \ref{lem:orthonormal_basis} to compute the Bergman projection
  \[
    P_{D_\mu}^1\tilde f_1(w)=\frac{1}{\lambda(1-\mu,0,0)}\int_{D_\mu}\exp\left(-\frac{1}{|u_1|^{\mu}}\right)|u_1|^{2-2\mu}dV_{D_\mu}dw_1.
  \]
  Since $P_{D_\mu}^1\tilde f_1$ is a constant multiple of $dw_1$, $P_{D_\mu}^1\tilde f_1\notin W^s_{1,0}(D_\mu;dV_{D_\mu})$.  As before, we may pull this back to obtain a counterexample in $\Omega_\mu$.  If we set $\tilde f_2=\tilde f_1\wedge\theta_w^2$, then we may use \eqref{eq:Bergman_projection_decomposition_two_form} to see that we also have a counterexample when $p=2$.
\end{proof}

\begin{cor}
\label{cor:sharp_regularity}
  For $0<r<\frac{1}{2}$ and $p\in\{0,1,2\}$, there exists $\mu>2$ such that $P_{\Omega_\mu}^p$ is continuous in $W^s(\Omega_\mu)$ for all $0\leq s<r$ and $P_{\Omega_\mu}^p$ is not continuous in $W^r(\Omega_\mu)$.
\end{cor}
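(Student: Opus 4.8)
The plan is to simply invert the sharp thresholds already established. Theorems \ref{thm:regularity} and \ref{thm:irregularity} together show that the optimal Sobolev exponent for $P_{\Omega_\mu}^p$ equals $\min\{\tfrac12,\,T_p(\mu)\}$, where $T_0(\mu)=\frac{1-\lfloor\mu\rfloor}{\mu}+1$ and $T_1(\mu)=T_2(\mu)=\frac1\mu$: continuity holds strictly below the threshold, and there is $f\in C^\infty_{p,0}(\overline{\Omega_\mu})\subset W^r_{p,0}(\Omega_\mu)$ with $P_{\Omega_\mu}^pf\notin W^r$ exactly at the threshold. So it suffices to produce, for each given $r\in(0,\tfrac12)$, a value $\mu>2$ with $T_p(\mu)=r$; since $r<\tfrac12$ the minimum with $\tfrac12$ will automatically select $r$, the endpoint $s=r$ lies in the closed-below irregularity range of Theorem \ref{thm:irregularity}, and $f\in C^\infty\subset W^r$ forces discontinuity in $W^r$.

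For $p\in\{1,2\}$ this is immediate: I would set $\mu=\frac1r$. Then $\mu>2$ because $r<\tfrac12$, and $T_1(\mu)=T_2(\mu)=\frac1\mu=r<\tfrac12$. Theorem \ref{thm:regularity} gives continuity for all $0\le s<r$, and since $s=r$ satisfies \eqref{eq:s_range_negative_2}, Theorem \ref{thm:irregularity} supplies the counterexample at $s=r$.

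The case $p=0$ is the one requiring real work, because $T_0$ depends on $\mu$ through the floor function, so $\mu$ must satisfy a self-referential constraint. I would parametrize by the intended integer part: writing $n=\lfloor\mu\rfloor$, the equation $T_0(\mu)=r$, i.e.\ $\frac{1-n}{\mu}+1=r$, forces $\mu=\frac{n-1}{1-r}$. The main obstacle is guaranteeing consistency, namely that this $\mu$ really has $\lfloor\mu\rfloor=n$. A direct computation converts the two inequalities $n\le\frac{n-1}{1-r}$ and $\frac{n-1}{1-r}<n+1$ into the single condition $n\in\bigl[\frac1r,\frac2r-1\bigr)$. This half-open interval has length $\frac1r-1$, which exceeds $1$ precisely because $r<\tfrac12$, so it must contain an integer $n$; note $n\ge\frac1r>2$. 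Fixing such an $n$ and setting $\mu=\frac{n-1}{1-r}$, one checks $\lfloor\mu\rfloor=n$, hence
\[
  \frac{1-\lfloor\mu\rfloor}{\mu}+1=\frac{(1-n)(1-r)}{n-1}+1=r,
\]
and $\mu\ge\frac{1/r-1}{1-r}=\frac1r>2$. Theorem \ref{thm:regularity} then yields continuity of $P_{\Omega_\mu}^0$ in $W^s(\Omega_\mu)$ for all $0\le s<r$, while $s=r$ satisfies \eqref{eq:s_range_negative}, so Theorem \ref{thm:irregularity} gives $f\in C^\infty(\overline{\Omega_\mu})\subset W^r(\Omega_\mu)$ with $P_{\Omega_\mu}^0f\notin W^r(\Omega_\mu)$, completing the argument.
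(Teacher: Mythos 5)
Your proposal is correct and takes essentially the same approach as the paper: invert the sharp thresholds of Theorems \ref{thm:regularity} and \ref{thm:irregularity}, choosing $\mu=\frac{1}{r}$ for $p\in\{1,2\}$ and solving the self-referential floor-function constraint for $p=0$. The only difference is cosmetic: the paper picks the explicit integer $\ell=\left\lceil\frac{1}{r}\right\rceil$ and checks $0\leq\mu-\ell<1$ directly, whereas you prove existence of a valid integer part $n\in\left[\frac{1}{r},\frac{2}{r}-1\right)$ by an interval-length argument; the paper's choice is just a constructive instance of your existence step.
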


\begin{proof}
  Suppose $p=0$.  Given $0<r<\frac{1}{2}$, set $\ell=\left\lceil\frac{1}{r}\right\rceil$ and $\mu=\frac{\ell-1}{1-r}$.  We have $\mu-\ell=\frac{r\ell-1}{1-r}$.  Since $\frac{1}{r}\leq\ell<\frac{1}{r}+1$, we have $0\leq\mu-\ell<\frac{r}{1-r}<1$, and hence $\lfloor\mu\rfloor=\ell$.  This means that
  \[
    \frac{1-\lfloor\mu\rfloor}{\mu}+1=\frac{(1-\ell)(1-r)}{\ell-1}+1=r.
  \]
  By Theorems \ref{thm:regularity} and \ref{thm:irregularity}, we are done.
  
  When $p\in\{1,2\}$, we may set $\mu=\frac{1}{r}$ and Theorems \ref{thm:regularity} and \ref{thm:irregularity} will give us the same result.
\end{proof}

\bibliographystyle{amsplain}
\bibliography{harrington}
\end{document}